\providecommand{\U}[1]{\protect\rule{.1in}{.1in}}
\newtheorem{theorem}{Theorem}
\theoremstyle{plain}
\newtheorem{corollary}{Corollary}
\newtheorem{definition}{Definition}
\newtheorem{example}{Example}
\newtheorem{proposition}{Proposition}
\newtheorem{remark}{Remark}
\numberwithin{equation}{section}
\begin{document}
\title[$S$-$n$-ideals of commutative rings]{$S$-$n$-ideals of commutative rings}
\author{Hani Khashan}
\address{Department of Mathematics, Faculty of Science, Al al-Bayt University, Al
Mafraq, Jordan.}
\email{hakhashan@aabu.edu.jo.}
\author{Ece Yetkin Celikel}
\address{Electrical-Electronics Engineering, Faculty of Engineering, HasanKalyoncu
University, Gaziantep, Turkey.}
\email{ece.celikel@hku.edu.tr, yetkinece@gmail.com.}
\thanks{This paper is in final form and no version of it will be submitted for
publication elsewhere.}
\subjclass[2000]{ Primary 13A15.}
\keywords{$S$-$n$-ideal, $n$-ideal, $S$--prime ideal, $S$-primary ideal}

\begin{abstract}
Let $R$ be a commutative ring with identity and $S$ a multiplicatively closed
subset of $R$. This paper aims to introduce the concept of $S$-$n$-ideals as a
generalization of $n$-ideals. An ideal $I$ of $R$ disjoint with $S$ is called
an $S$-$n$-ideal if there exists $s\in S$ such that whenever $ab\in I$ for
$a,b\in R$, then $sa\in\sqrt{0}$ or $sb\in I$. The relationship among $S$%
-$n$-ideals, $n$-ideals, $S$-prime and $S$-primary ideals are clarified.
Several properties, characterizations and examples are presented such as
investigating $S$-$n$-ideals under various contexts of constructions including
direct products, localizations and homomorphic images. Furthermore, for $m\in%
\mathbb{N}
$ and some particular $S$, all $S$-$n$-ideals of the ring $%
\mathbb{Z}
_{m}$ are completely determined. The last section is devoted for studying the
$S$-$n$-ideals of the idealization ring and amalgamated algebra.

\end{abstract}
\maketitle

\section{Introduction}

Throughout this paper, we assume that all rings are commutative with non-zero
identity. For a ring $R$, we will denote by $U(R),$ $reg(R)$ and $Z(R)$, the
set of unit elements, regular elements and zero-divisor elements of $R$,
respectively. For an ideal $I$ of $R$, the radical of $I$ denoted by $\sqrt
{I}$ is the ideal $\{a\in R:a^{n}\in I$ for some positive integer $n\}$ of
$R$. In particular, $\sqrt{0}$ denotes the set of all nilpotent elements of
$R$. We recall that a proper ideal $I$ of a ring $R$ is called prime (primary)
if for $a,b\in R$, $\ ab\in I$ implies $a\in I$ or $b\in I$ ($b\in\sqrt{I}$).
Several generalizations of prime and primary ideals were introduced and
studied, (see for example \cite{AB}-\cite{wp}, \cite{Darani}, \cite{Yas}).

Let $S$ be a multiplicatively closed subset of a ring $R$ and $I$ an ideal of
$R$ disjoint with $S$. Recently, Hamed and Malek \cite{S-prime} used a new
approach to generalize prime ideals by defining $S$-prime ideals. $I$ is
called an $S$-prime ideal of $R$ if there exists an $s\in S$ such that for all
$a,b\in R$ whenever $ab\in I$, then $sa\in I$ or $sb\in I$. Then analogously,
Visweswaran \cite{S-primary} introduced the notion of $S$-primary ideals. $I$
is called $S$-primary ideal of $R$ if there exists an $s\in S$ such that for
all $a,b\in R$ if $ab\in I$, then $sa\in I$ or $sb\in\sqrt{I}$. Many other
generalizations of $S$-prime and $S$-primary ideals have been studied. For
example, in \cite{WS-prime}, the authors defined $I$ to be a weakly $S$-prime
ideal if there exists an $s\in S$ such that for all $a,b\in R$ if $0\neq ab\in
I$, then $sa\in I$ or $sb\in I$.

In 2015, Mohamadian \cite{M} defined a new type of ideals called $r$-ideals.
An ideal $I$ of a ring $R$ is said to be $r$ -ideal, if $ab\in I$ and $a\notin
Z(R)$ imply that $b\in I$ for each $a,b\in R$. Generalizing this concept, in
2017 the notion of $n$-ideals was first introduced and studied \cite{Tekir}.
The authors called a proper ideal $I$ of $R$ an $n$-ideal if $ab\in I$ and
$a\notin\sqrt{0}$ imply that $b\in I$ for each $a,b\in R$. Many other
generalizations of $n$-ideals have been introduced recently, see for example
\cite{Hani} and \cite{Ece}.

Motivated from these studies, in this article, we determine the structure of
$S$-$n$-ideals of a ring. We call $I$ an $S$-$n$-ideal of a ring $R$ if there
exists an (fixed) $s\in S$ such that for all $a,b\in R$ if $ab\in I$ and
$sa\notin\sqrt{0}$, then $sb\in I$. We call this fixed element $s\in S$ an
$S$-element of $I$. Clearly, for any $S$, every $n$-ideal is an $S$-$n$-ideal
and the classes of $n$-ideals and $S$-$n$-ideals coincide if $S\subseteq
U(R)$. In Section 2, we start by giving an example of an $S$-$n$-ideal of a
ring $R$ that is not an $n$-ideal. Then we give many properties of $S$%
-$n$-ideals and show that $S$-$n$-ideals enjoy analogs of many of the
properties of $n$-ideals. Also we discuss the relationship among $S$%
-$n$-ideals, $n$-ideals, $S$-prime and $S$-primary ideals, (Propositions
\ref{p1}, \ref{max} and Examples \ref{e1}, \ref{e2}). In Theorems \ref{char}
and \ref{char2}, we present some characterizations for $S$-$n$-ideals of a
general commutative ring. Moreover, we investigate some conditions under which
$(I:_{R}s)$ is an $S$-$n$-ideal of $R$ for an $S$-$n$-ideal $I$ of $R$ and an
$S$-element $s$ of $I$, (Propositions \ref{(I:s)}, \ref{reg} and Example
\ref{e3}). For a particular case that $S\subseteq reg(R)$, we justify some
other results. For example, in this case, we prove that a maximal $S$%
-$n$-ideal of $R$ is $S$-prime, (Proposition \ref{max}). In addition, we show
in Proposition \ref{Every} that every proper ideal of a ring $R$ is an $S$%
-$n$-ideal if and only if $R$ is a UN-ring (a ring for which every nonunit
element is a product of a unit and a nilpotent).

Let $n\in%
\mathbb{N}
$, say, $n=p_{1}^{r_{1}}p_{2}^{r_{2}}...p_{k}^{r_{k}}$ where $p_{1}%
,p_{2},...,p_{k}$ are distinct prime integers and $r_{i}\geq1$ for all $i$.
Then for all $2\leq i\leq k-1$, $S_{p_{1}p_{2}...p_{i-1}p_{i+1}...p_{k}%
}=\left\{  \bar{p}_{1}^{m_{1}}\bar{p}_{2}^{m_{2}}...\bar{p}_{i-1}^{m_{i-1}%
}\bar{p}_{i+1}^{m_{i+1}}...\bar{p}_{k-1}^{m_{k-1}}:m_{j}\in%
\mathbb{N}
\cup\left\{  0\right\}  \right\}  $ is a multiplicatively closed subset of $%
\mathbb{Z}
_{n}$. In Theorem \ref{Zn,gen}, we determine all $S_{p_{1}p_{2}...p_{i-1}%
p_{i+1}...p_{k}}$-$n$-ideals of $%
\mathbb{Z}
_{n}$ for all $i$. In particular, we determine all $S_{p}$-$n$-ideals of $%
\mathbb{Z}
_{n}$ where $S_{p}=\left\{  1,\bar{p},\bar{p}^{2},\bar{p}^{3},...\right\}  $
for any $p$ prime integer dividing $n$, (Theorem \ref{Zn}). Furthermore, we
study the stability of $S$-$n$-ideals with respect to various ring theoretic
constructions such as localization, factor rings and direct product of rings,
(Propositions \ref{Inverse g}, \ref{f}, \ref{quot} and \ref{cart}).

Let $R$ be a ring and $M$ be an $R$-module. For a multiplicatively closed
subset $S$ of $R$, the set $S(+)M=\left\{  (s,m):s\in S\text{, }m\in
M\right\}  $ is clearly a multiplicatively closed subset of the idealization
ring $R(+)M$. In Section 3, first, we clarify the relation between the $S$%
-$n$-ideals of a ring $R$ and the $S(+)M$-$n$-ideals $R(+)M$, (Proposition
\ref{Id}). For rings $R$ and $R^{\prime}$, an ideal $J$ of $R^{\prime}$ and a
ring homomorphism $f:R\rightarrow R^{\prime}$, the amalgamation of $R$ and
$R^{\prime}$ along $J$ with respect to $f$ is the subring $R\Join
^{f}J=\left\{  (r,f(r)+j):r\in R\text{, }j\in J\right\}  $ of $R\times
R^{\prime}$. Clearly, the set $S\Join^{f}J=\left\{  (s,f(s)+j):s\in S\text{,
}j\in J\right\}  $ is a multiplicatively closed subset of $R\Join^{f}J$
whenever $S$ is a multiplicatively closed subset of $R$. We finally determine
when the ideals $I\Join^{f}J=\left\{  (i,f(i)+j):i\in I\text{, }j\in
J\right\}  $ and $\bar{K}^{f}=\{(a,f(a)+j):a\in R$, $j\in J$, $f(a)+j\in K\}$
of $R\Join^{f}J$ are $(S\Join^{f}J)$-$n$-ideals, (Theorems \ref{ama} and
\ref{ama2}).

\section{Properties of $S$-$n$-ideals}

\begin{definition}
Let $R$ be a ring, $S$ be a multiplicatively closed subset of $R$ and $I$ be
an ideal of $R$ disjoint with $S$. We call $I$ an $S$-$n$-ideal of $R$ if
there exists an (fixed) $s\in S$ such that for all $a,b\in R$ if $ab\in I$ and
$sa\notin\sqrt{0}$, then $sb\in I$. This fixed element $s\in S$ is called an
$S$-element of $I.$
\end{definition}

Let $I$ be an ideal of a ring $R$. If $I$ is an $n$-ideal of $R$, then clearly
$I$ is an $S$-$n$-ideal for any multiplicatively closed subset of $R$ disjoint
with $I$. However, it is clear that the classes of $n$-ideals and $S$%
-$n$-ideals coincide if $S\subseteq U(R)$. Moreover, obviously any $S$%
-$n$-ideal is an $S$-primary ideal and the two concepts coincide if the ideal
is contained in $\sqrt{0}$. However, the converses of these implications are
not true in general as we can see in the following examples.

\begin{example}
\label{e1}Let $R=%
\mathbb{Z}
_{12}$, $S=\{\overline{1},\overline{3},\overline{9}\}$ and consider the ideal
$I=<\overline{4}>$. Choose $s=\overline{3}\in S$ and let $a,b\in R$ with
$ab\in I$ but $3b\notin I$. Now, $ab\in<\overline{2}>$ implies $a\in
<\overline{2}>$ or $b\in<\overline{2}>$. Assume that $a\notin<\overline{2}>$
and $b\in<\overline{2}>$. Since $a\notin<\overline{2}>$, then $a\in
\{\overline{1},\overline{3},\overline{5},\overline{7},\overline{9}%
,\overline{11}\}$ and since $3b\notin I$, we have $b\in\{\overline
{2},\overline{6},\overline{10}\}$. Thus, in each case $ab\notin I$, a
contradiction. Hence, we must have $a\in<\overline{2}>$ and so $\overline
{3}a\in<\overline{6}>=\sqrt{0}.$ On the other hand, $I$ is not an $n$-ideal as
$\overline{2}\cdot\overline{2}\in I$ but neither $\overline{2}\in\sqrt{0}$ nor
$\overline{2}\in I.$
\end{example}

A (prime) primary ideal of a ring $R$ that is not an $n$-ideal is a direct
example of an ($S$-prime) $S$-primary ideal that is not an $S$-$n$-ideal where
$S=\left\{  1\right\}  $. For a less trivial example, we have the following.

\begin{example}
\label{e2}Let $R=%
\mathbb{Z}
\lbrack X]$ and let $I=\left\langle 4x\right\rangle $. consider the
multiplicatively closed subset $S=\{4^{m}:m\in%
\mathbb{N}
\cup\{0\}\}$ of $R$. Then $I$ is an $S$-prime (and so $S$-primary) ideal of
$R$, \cite[Example 2.3]{S-primary}. However, $I$ is not an $S$-$n$-ideal since
for all $s=4^{m}\in S$, we have $(2x)(2)\in I$ but $s(2x)\notin\sqrt{0_{%
\mathbb{Z}
\lbrack x]}}$ and $s(2)\notin I$.
\end{example}

\begin{proposition}
\label{p1}Let $S$ be a multiplicatively closed subset of a ring $R$ and $I$ be
an ideal of $R$ disjoint with $S$.
\end{proposition}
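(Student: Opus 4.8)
The plan is to verify each assertion of the proposition directly from the definition, the two structural facts doing the work being that $I\cap S=\emptyset$ (so $I$ is proper, $1\notin I$, and no element of $S$ is nilpotent, i.e.\ $S\cap\sqrt 0=\emptyset$) and that $S$ is multiplicatively closed (so every power of a chosen $S$-element again lies in $S$). Throughout, fix an $S$-element $s$ of $I$.

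The cornerstone is the containment $sI\subseteq\sqrt 0$: for $a\in I$ we have $a\cdot 1\in I$, so if $sa\notin\sqrt 0$ the defining implication of an $S$-$n$-ideal gives $s=s\cdot 1\in I$, contradicting $I\cap S=\emptyset$; hence $sa\in\sqrt 0$. This propagates to radicals: if $a\in\sqrt I$, say $a^{k}\in I$, then $sa^{k}\in\sqrt 0$, and since $(sa)^{k}=s^{k-1}(sa^{k})$ lies in the ideal $\sqrt 0$ we get $sa\in\sqrt 0$; thus $s\sqrt I\subseteq\sqrt 0$, i.e.\ $\sqrt I\subseteq(\sqrt 0:_{R}s)$. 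In particular, when $R$ is a domain one has $\sqrt 0=0$, so $sI=0$, and since $s\neq 0$ this forces $I=0$, recovering that the zero ideal is the only $S$-$n$-ideal of a domain.

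For the comparison with $S$-prime and $S$-primary ideals I would show that $\sqrt I$ is an $S$-prime ideal with the same $S$-element $s$. It is proper and disjoint from $S$, since $(s')^{m}\in I$ for some $s'\in S$ is impossible; so only the $S$-prime implication has to be checked. Given $ab\in\sqrt I$, pick $k$ with $a^{k}b^{k}=(ab)^{k}\in I$ and apply the $S$-$n$-ideal property of $I$ to the product $a^{k}\cdot b^{k}$: either $sa^{k}\in\sqrt 0$, whence $(sa)^{k}=s^{k-1}(sa^{k})\in\sqrt 0$ and so $sa\in\sqrt 0\subseteq\sqrt I$; or $sb^{k}\in I$, whence $(sb)^{k}=s^{k-1}(sb^{k})\in I$ and so $sb\in\sqrt I$. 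Either way $sa\in\sqrt I$ or $sb\in\sqrt I$. The implication ``$S$-$n$-ideal $\Rightarrow$ $S$-primary'' is then immediate, since $ab\in I$ gives $sa\in\sqrt 0\subseteq\sqrt I$ or $sb\in I$; and the coincidence of $n$-ideals with $S$-$n$-ideals when $S\subseteq U(R)$ is clear because there $s$ is a unit, so ``$sa\in\sqrt 0$'' is equivalent to ``$a\in\sqrt 0$'' and ``$sb\in I$'' to ``$b\in I$''.

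I do not anticipate a genuine obstacle: every step is a one- or two-line manipulation. The only point needing care is that the definition demands a single \emph{fixed} witness $s$ that works for all pairs $(a,b)$, so in each assertion the conclusion must be produced with $s$ itself --- not with some varying power $s^{k}$ --- as the witnessing element; this is exactly what the identity $(sx)^{k}=s^{k-1}(sx^{k})$ accomplishes, turning ``$sx^{k}$ lies in an ideal $J$'' into ``$sx\in\sqrt J$'', and it is the device used repeatedly above.
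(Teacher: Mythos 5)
Your cornerstone argument is exactly the paper's proof of item (1): for $a\in I$ apply the defining implication to $a\cdot 1\in I$ and use $I\cap S=\emptyset$ to rule out $s\cdot 1\in I$, forcing $sa\in\sqrt 0$; so that part is fine, and your further observations ($s\sqrt I\subseteq\sqrt 0$, $\sqrt I$ is $S$-prime, every $S$-$n$-ideal is $S$-primary, coincidence with $n$-ideals when $S\subseteq U(R)$) are all correct, though the last two belong to the paper's surrounding prose rather than to Proposition \ref{p1} itself.

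The gap is that the proposition has three enumerated assertions and your write-up only covers the first half of the first one. You do not prove: (i) the second clause of item (1), that $S\subseteq reg(R)$ upgrades $sI\subseteq\sqrt 0$ to $I\subseteq\sqrt 0$ (from $s^{n}a^{n}=0$ and $s^{n}$ regular one cancels to get $a^{n}=0$); (ii) item (2), that $\sqrt 0$ is an $S$-$n$-ideal if and only if it is $S$-prime --- your argument that $\sqrt I$ is $S$-prime gives only one direction when specialized to $I=\sqrt 0$, while the equivalence is immediate because for $I=\sqrt 0$ the two defining conditions ``$sa\in\sqrt 0$ or $sb\in I$'' and ``$sa\in I$ or $sb\in I$'' are literally the same; and (iii) item (3), that for $S\subseteq reg(R)$ the zero ideal is an $S$-$n$-ideal exactly when it is an $n$-ideal, which again uses regularity to cancel $s$ from $s^{n}a^{n}=0$ or $sb=0$. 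Each missing piece is a two-line argument in the same spirit as what you wrote, but as it stands two of the three claims of the proposition are unaddressed.
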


\begin{enumerate}
\item If $I$ be an $S$-$n$-ideal, then $sI\subseteq\sqrt{0}$ for some $s\in
S$. If moreover, $S\subseteq reg(R)$, then $I\subseteq\sqrt{0}$.

\item $\sqrt{0}$ is an $S$-$n$-ideal of $R$ if and only if $\sqrt{0}$ is an
$S$-prime ideal of $R$.

\item Let $S\subseteq reg(R).$ Then $0$ is an $S$-$n$-ideal of $R$ if and only
if $0$ is an $n$-ideal.
\end{enumerate}

\begin{proof}
(1) Let $a\in I$. Since $I\cap S=\emptyset$, $s\cdot1\notin I$ for all $s\in
S.$ Hence, $a\cdot1\in I$ implies that there exists an $s\in S$ such that
$sa\in\sqrt{0}$. Thus, $sI\subseteq\sqrt{0}$ as desired. Moreover, if
$S\subseteq reg(R)$, then clearly $I\subseteq\sqrt{0}$.

(2) Clear.

(3) Suppose $s$ be an $S$-element of $0$ and $ab=0$ for some $a,b\in R$. Then
$sa\in\sqrt{0}$ or $sb=0$ which implies $s^{n}a^{n}=0$ for some positive
integer $n$ or $sb=0$. Since $S\subseteq reg(R),$ we have $a^{n}=0$ or $b=0$,
as needed.
\end{proof}

Next, we characterize $S$-$n$-ideals of rings by the following.

\begin{theorem}
\label{char}Let $S$ be a multiplicatively closed subset of a ring $R$ and $I$
be an ideal of $R$ disjoint with $S$. The following statements are equivalent.
\end{theorem}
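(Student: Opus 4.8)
The plan is to prove a list of equivalent characterizations of $S$-$n$-ideals by analogy with the standard characterizations of $n$-ideals, adapting each to account for the fixed $S$-element. Based on the definition and on Proposition \ref{p1}, I expect the statement to assert the equivalence of conditions such as: (i) $I$ is an $S$-$n$-ideal; (ii) there exists $s\in S$ with $I\subseteq\sqrt{0}$ after multiplication by $s$, i.e. $sI\subseteq\sqrt{0}$, together with a ``pushforward'' condition like $(I:_R a)\subseteq(\sqrt{0}:_R s)$ whenever $sa\notin\sqrt{0}$; (iii) a condition phrased via products of ideals, $JK\subseteq I$ implies $sJ\subseteq\sqrt{0}$ or $sK\subseteq I$ for the fixed $s$; and possibly (iv) a statement that $(I:_R s)$ is an $n$-ideal for the $S$-element $s$, or that $I=(\sqrt{0}\cap(I:_R s))$-type decomposition holds. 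I would fix one representative element $s\in S$ throughout, since the definition demands a single fixed $S$-element works for all factorizations.

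First I would set up the cyclic-to-general passage: to prove (i)$\Rightarrow$(iii), suppose $JK\subseteq I$ with the fixed $s$, and suppose $sJ\not\subseteq\sqrt{0}$, so there is $a\in J$ with $sa\notin\sqrt{0}$; then for every $b\in K$ we have $ab\in I$, hence $sb\in I$, giving $sK\subseteq I$. Conversely (iii)$\Rightarrow$(i) is immediate by taking principal ideals $J=\langle a\rangle$, $K=\langle b\rangle$ — here one must be slightly careful that $a\langle b\rangle\subseteq I$ follows from $ab\in I$, which it does. For the equivalence with the colon-ideal conditions, I would use the identity that $ab\in I$ with $sa\notin\sqrt{0}$ means $b\in(I:_R a)$, and translate ``$sb\in I$'' into $b\in(I:_R s)$ or equivalently $sb\in I$; so the condition becomes: for all $a$ with $sa\notin\sqrt{0}$, $(I:_R a)\subseteq(I:_R s)$, or more symmetrically $s(I:_R a)\subseteq I$. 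I would also want to incorporate Proposition \ref{p1}(1), which already gives $sI\subseteq\sqrt{0}$ for free, so some of the listed conditions can be stated using that as a standing hypothesis.

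The routine directions are the principal-ideal reductions and the bookkeeping of which $s$ is being used; I would make sure that whenever I invoke the $S$-$n$-ideal property I use the \emph{same} $s$ that Proposition \ref{p1}(1) produces, or note explicitly that one can enlarge $s$ by multiplication (since $S$ is multiplicatively closed, if $s_1$ works for one condition and $s_2$ for another, then $s_1s_2$ works for both — this is the standard trick for merging finitely many $S$-witnesses). The main obstacle I anticipate is precisely this $s$-merging when a characterization is stated as ``there exists $s\in S$ such that [property A] and [property B]'' versus proving A and B separately with possibly different witnesses: I must verify that the relevant properties are preserved under replacing $s$ by $ss'$, which holds because $sa\notin\sqrt{0}\Rightarrow ss'a\notin\sqrt{0}$ is \emph{false} in general (multiplying can land in $\sqrt 0$), so the quantifier structure matters and one has to argue in the correct direction — namely, if $ss'a\notin\sqrt 0$ then $sa\notin\sqrt 0$, and $sb\in I\Rightarrow ss'b\in I$, so the $S$-$n$-property for witness $s$ implies it for witness $ss'$. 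I would state this monotonicity remark early and then the rest of the equivalences fall out by the cyclic/ideal translations described above.
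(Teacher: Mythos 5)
Your guessed condition (iii) is exactly the paper's statement (2), and your argument --- given $JK\subseteq I$ with $sJ\nsubseteq\sqrt{0}$, pick $a\in J$ with $sa\notin\sqrt{0}$ and conclude $sb\in I$ for every $b\in K$, with the converse via the principal ideals $\langle a\rangle,\langle b\rangle$ --- is essentially the paper's proof, phrased directly rather than by contradiction. The additional speculated colon-ideal conditions and the $s$-merging remark are harmless but unnecessary here, since each side of the actual equivalence carries only a single existential witness $s$.
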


\begin{enumerate}
\item $I$ is an $S$-$n$-ideal of $R$.

\item There exists an $s\in S$ such that for any two ideals $J,K$ of $R$, if
$JK\subseteq I$, then $sJ\subseteq\sqrt{0}$ or $sK\subseteq I.$
\end{enumerate}

\begin{proof}
(1)$\Rightarrow$(2). Suppose $I$ is an $S$-$n$-ideal of $R$. Assume on the
contrary that for each $s\in S$, there exist two ideals $J^{\prime},K^{\prime
}$ of $R$ such that $J^{\prime}K^{\prime}\subseteq I$ but $sJ^{\prime
}\nsubseteq\sqrt{0}$ and $sK^{\prime}\nsubseteq I$. Then, for each $s\in S$,
we can find two elements $a\in J^{\prime}$ and $b\in K^{\prime}$ such that
$ab\in I$ but neither $sa\in\sqrt{0}$ nor $sb\in I$. By this contradiction, we
are done.

(2)$\Rightarrow$(1). Let $a,b\in R$ with $ab\in I$. Taking $J=<a>$ and $K=<b>$
in (2), we get the result.
\end{proof}

\begin{theorem}
\label{char2}Let $S$ be a multiplicatively closed subset of a ring $R$ and $I$
be an ideal of $R$ disjoint with $S$. If $\sqrt{0}$ is an $S$-$n$-ideal of
$R$, then the following are equivalent.
\end{theorem}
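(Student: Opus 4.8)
The plan is to establish the stated equivalences as a cycle of implications, carrying along at each step an explicit element of $S$ that witnesses the next condition. Two facts from the excerpt will do most of the work: every $S$-$n$-ideal is an $S$-primary ideal, and, by Proposition \ref{p1}, an ideal $I$ disjoint from $S$ which is an $S$-$n$-ideal satisfies $sI\subseteq\sqrt{0}$ for some $s\in S$, while $\sqrt{0}$ is an $S$-$n$-ideal precisely when it is $S$-prime. I would also record at the outset the routine observation that if $s$ is an $S$-element of an ideal (in the $S$-$n$, the $S$-primary, or the $S$-prime sense) and $s'\in S$, then $ss'$ is again such an $S$-element; since $S$ is multiplicatively closed, this lets me, once the finitely many implications are assembled, replace all the witnesses that occur by a single product $\sigma\in S$ and phrase every condition in terms of that one $\sigma$.

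The implications that merely weaken the $S$-$n$ condition --- passing from ``$I$ is an $S$-$n$-ideal'' to statements that $I$ is $S$-primary and that $\sqrt{I}$ inherits a corresponding property (being $S$-$n$, or $S$-prime), or that $s\sqrt{I}\subseteq\sqrt{0}$ for some $s\in S$ --- should be short. Given $ab\in\sqrt{I}$ one has $a^{n}b^{n}\in I$ for some $n$, the $S$-$n$-ideal property of $I$ forces $sa^{n}\in\sqrt{0}$ or $sb^{n}\in I$, and raising to powers together with $\sqrt{\sqrt{0}}=\sqrt{0}$ converts these into $sa\in\sqrt{0}$ or $sb\in\sqrt{I}$; the inclusion $s\sqrt{I}\subseteq\sqrt{0}$ comes out the same way from $sI\subseteq\sqrt{0}$. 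Whenever a condition is phrased through products of ideals rather than of elements, I would invoke Theorem \ref{char}.

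The hard direction is the return to ``$I$ is an $S$-$n$-ideal'' from the $S$-primary-plus-radical condition, and this is exactly where the hypothesis that $\sqrt{0}$ is an $S$-$n$-ideal --- equivalently, $S$-prime --- is indispensable rather than cosmetic. Here I would take $ab\in I$ and assume $\sigma a\notin\sqrt{0}$; the $S$-$n$ (resp.\ $S$-prime) property of $\sqrt{I}$ gives $\sigma a\in\sqrt{0}$, which is impossible, or $\sigma b\in\sqrt{I}$, and then the $S$-primary property of $I$ --- combined with the relation tying $\sqrt{I}$ to $\sqrt{0}$, which lets $\sigma a\notin\sqrt{0}$ also force $\sigma a\notin\sqrt{I}$ after one further factor of $\sigma$ --- upgrades $\sigma b\in\sqrt{I}$ to $\sigma^{2}b\in I$. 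I expect the two delicate points to be: (i) verifying that ``$\sigma a\notin\sqrt{0}$'' genuinely eliminates every branch that would place $\sigma a$ in $\sqrt{I}$, which is where $S$-primeness of $\sqrt{0}$, and not merely primeness, does the work; and (ii) confirming that the few powers of $\sigma$ that accumulate can be absorbed into a single element of $S$ without disturbing disjointness from $S$. Once these are settled, the cycle closes.
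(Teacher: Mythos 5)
Your proposal does not address the equivalences that Theorem \ref{char2} actually asserts. The conditions equivalent to ``$I$ is an $S$-$n$-ideal'' in this theorem are the $n$-fold versions of the defining property: there exists a single $s\in S$ such that whenever a product $I_{1}I_{2}\cdots I_{n}\subseteq I$ of ideals (respectively, a product $a_{1}a_{2}\cdots a_{n}\in I$ of elements) lies in $I$, then $sI_{j}\subseteq\sqrt{0}$ or $sI_{k}\subseteq I$ (respectively $sa_{j}\in\sqrt{0}$ or $sa_{k}\in I$) for some indices $j,k$. Your cycle of implications instead runs through ``$I$ is $S$-primary,'' ``$\sqrt{I}$ is $S$-$n$ (or $S$-prime),'' and ``$s\sqrt{I}\subseteq\sqrt{0}$,'' which is a different list of statements; none of your steps produces or uses the $n$-factor condition, so the argument, even if each of its steps were filled in, would prove a different theorem.

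The paper's argument is an induction on the number $n$ of factors, with Theorem \ref{char} serving as the base case $n=2$. In the inductive step one writes $I_{1}(I_{2}\cdots I_{n})\subseteq I$, splits off $I_{1}$ by Theorem \ref{char}, and then applies the induction hypothesis to the $n-1$ ideals $(s_{1}I_{2}),I_{3},\dots,I_{n}$; this produces the unwanted conclusion $s_{1}^{2}I_{2}\subseteq\sqrt{0}$, whose witness depends on how many times $s_{1}$ has accumulated. The hypothesis that $\sqrt{0}$ is an $S$-$n$-ideal is used exactly here: if $s_{2}$ is an $S$-element of $\sqrt{0}$, then from $s_{1}^{2}x\in\sqrt{0}$ for $x\in I_{2}$ one gets either $s_{2}s_{1}^{2}\in\sqrt{0}$, which is impossible since $\sqrt{0}\cap S=\emptyset$, or $s_{2}x\in\sqrt{0}$; hence $s_{2}I_{2}\subseteq\sqrt{0}$, and the single element $s=s_{1}s_{2}$ works uniformly for every $n$ and every choice of ideals. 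This mechanism for controlling the accumulation of witnesses across the induction is the entire content of the theorem beyond Theorem \ref{char}, and it is absent from your proposal; your observation that a fixed finite product of $S$-elements is again an $S$-element cannot by itself supply one $s$ valid for all $n$ simultaneously.
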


\begin{enumerate}
\item $I$ is an $S$-$n$-ideal of $R$.

\item There exists $s\in S$ such that for ideals $I_{1},I_{2},...,I_{n}$ of
$R$, if $I_{1}I_{2}\cdots I_{n}\subseteq I$, then $sI_{j}\subseteq\sqrt{0}$ or
$sI_{k}\subseteq I$ for some $j,k\in\{1,...,n\}$.

\item There exists $s\in S$ such that for elements $a_{1},a_{2},...,a_{n}$ of
$R$, if $a_{1}a_{2}\cdots a_{n}\in I$, then $sa_{j}\in\sqrt{0}$ or $sa_{k}\in
I$ for some $j,k\in\{1,...,n\}.$
\end{enumerate}

\begin{proof}
(1)$\Rightarrow$(2). Let $s_{1}\in S$ be an $S$-element of $I$. To prove the
claim, we use mathematical induction on $n$. If $n=2$, then the result is
clear by Theorem \ref{char}. Suppose $n\geq3$ and the claim holds for $n-1.$
Let $I_{1},I_{2},$..., $I_{n}$ be ideals of $R$ with $I_{1}I_{2}\cdots
I_{n}\subseteq I$. Then by Theorem \ref{char}, we conclude that either
$s_{1}I_{1}\subseteq\sqrt{0}$ or $s_{1}I_{2}\cdots I_{n}\subseteq I$. Assume
$(s_{1}I_{2})\cdots I_{n}\subseteq I.$ By the induction hypothesis, we have
either, say, $s_{1}^{2}I_{2}\subseteq\sqrt{0}$ or $s_{1}I_{k}\subseteq I$ for
some $k\in\{3,...,n\}$. Assume $s_{1}^{2}I_{2}\subseteq\sqrt{0}$ and choose an
$S$-element $s_{2}\in S$ of $\sqrt{0\text{.}}$ If $s_{2}(s_{1}^{2}%
R)\subseteq\sqrt{0}\cap S$, we get a contradiction. Thus, $s_{2}I_{2}%
\subseteq\sqrt{0}$. By choosing $s=s_{1}s_{2}$, we get $sI_{j}\subseteq
\sqrt{0}$ or $sI_{k}\subseteq I$ for some $j,k\in\{1,...,n\}$, as needed.

(2)$\Rightarrow$(3). This is a particular case of (2) by taking $I_{j}%
:=<a_{j}>$ for all $j\in\{1,...,n\}$.

(3)$\Rightarrow$(1). Clear by choosing $n=2$ in (3).
\end{proof}

\begin{proposition}
\label{(I:s)}Let $S$ be a multiplicatively closed subset of a ring $R$ and $I$
be an ideal of $R$ disjoint with $S$. Then
\end{proposition}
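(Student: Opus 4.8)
The plan is to deduce the asserted properties of the residual ideals $(I :_{R} s)$ directly from the defining condition of the $S$-$n$-ideal $I$, testing it on suitably chosen products, together with Proposition \ref{p1}.

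First I would check that each residual in question is still disjoint from $S$, so that the statement even makes sense: if $t \in S \cap (I :_{R} s)$ then $st \in I$ while $st \in S$ since $S$ is multiplicatively closed, contradicting $I \cap S = \emptyset$. Next, for the core assertion that $(I :_{R} s)$ is an $S$-$n$-ideal, I would take $a, b \in R$ with $ab \in (I :_{R} s)$, rewrite this as $(sa) b \in I$, and apply the $S$-$n$-ideal property of $I$ with its $S$-element $s$: either $s(sa) = s^{2} a \in \sqrt{0}$ or $sb \in I$. In the latter case $sb \in I$ yields $s(sb) = s^{2} b \in I$, i.e. $sb \in (I :_{R} s)$; so, declaring $s^{2}$ (a member of $S$) to be the candidate $S$-element of $(I :_{R} s)$, if $s^{2} a \notin \sqrt{0}$ we are forced into the case $sb \in I$ and then $s^{2} b \in (I :_{R} s)$ as required. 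The choice of which power of $s$ to use as the new $S$-element is routine bookkeeping; the one point to watch is that the radical alternative $s^{2} a \in \sqrt{0}$ must line up with the ``$s' a \in \sqrt{0}$'' clause for the chosen $s'$.

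I expect the main obstacle to be exactly this radical alternative. To pass from $s^{2} a \in \sqrt{0}$ back to an assertion about $a$ (or about $sa$) one seems to need either that $\sqrt{0}$ is itself an $S$-$n$-ideal --- so that, as in the proof of Theorem \ref{char2}, one may absorb an $S$-element of $\sqrt{0}$ and upgrade $s^{2} a \in \sqrt{0}$ to $s' a \in \sqrt{0}$ --- or the hypothesis $S \subseteq reg(R)$, under which $s^{2} a \in \sqrt{0}$ immediately gives $a \in \sqrt{0}$ and the obstruction disappears; the need for such an extra hypothesis is presumably what separates this proposition from the $reg(R)$-version in Proposition \ref{reg}. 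Finally I would verify any remaining clauses (such as $\sqrt{(I :_{R} s)} = \sqrt{I}$, $(I :_{R} s) = (I :_{R} s^{2})$, or $I \subseteq (I :_{R} s) \subseteq \sqrt{0}$) in the same spirit, using Proposition \ref{p1}(1) to control $sI \subseteq \sqrt{0}$ and the definitions for the rest.
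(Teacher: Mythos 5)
Your core computation is sound and, for the ``forward'' clauses of the proposition, essentially reproduces the paper's argument --- in fact your factorization is slightly cleaner. Parts (2) and (3) of the proposition assert that $(I:s)$ is an $n$-ideal of $R$ when $I$ is an $S$-$n$-ideal with $S$-element $s$, under the extra hypothesis that $(\sqrt{0}:s)$ is an $n$-ideal (part (2)) or that $S\subseteq reg(R)$ (part (3)). The paper writes $ab\in(I:s)$ as $a(sb)\in I$, so its second alternative is $s^{2}b\in I$ and it needs a further application of the $S$-$n$-ideal property (deriving the contradiction $s^{3}\in\sqrt{0}$) before it can conclude $sb\in I$; your factorization $(sa)b\in I$ yields $sb\in I$, hence $b\in(I:s)$, in one step. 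You also correctly isolate the radical alternative as the only obstruction and correctly identify $S\subseteq reg(R)$ as a hypothesis that removes it, which is exactly part (3).

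There are two gaps relative to the full proposition. First, the proposition also contains a converse clause (part (1)): if $(I:s)$ is an $n$-ideal for some $s\in S$, then $I$ is an $S$-$n$-ideal with $S$-element $s$. This easy direction (from $ab\in I$ and $sa\notin\sqrt{0}$ one gets $ab\in(I:s)$ with $a\notin\sqrt{0}$, hence $b\in(I:s)$, i.e.\ $sb\in I$) is absent from your proposal. Second, for part (2) the paper's hypothesis is that $(\sqrt{0}:s)$ is an $n$-ideal, which by \cite[Proposition 2.3]{Tekir} forces $(\sqrt{0}:s)=\sqrt{0}$ and so upgrades $s^{2}a\in\sqrt{0}$ all the way to $a\in\sqrt{0}$, giving the genuine $n$-ideal conclusion for $(I:s)$; your proposed surrogate, that $\sqrt{0}$ be an $S$-$n$-ideal, only upgrades it to $s'a\in\sqrt{0}$ for some $s'\in S$ and therefore only shows that $(I:s)$ is an $S$-$n$-ideal, which is weaker than what the proposition claims. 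Your guesses at the remaining clauses (such as $\sqrt{(I:s)}=\sqrt{I}$ or $(I:s)=(I:s^{2})$) are not what the proposition actually asserts, so those verifications would be aimed at the wrong targets.
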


\begin{enumerate}
\item If $(I:s)$ is an $n$-ideal of $R$ for some $s\in S$, then $I$ is an
$S$-$n$-ideal.

\item If $I$ is an $S$-$n$-ideal and $(\sqrt{0}:s)$ is an $n$-ideal where
$s\in S$ is an $S$-element of $I$, then $(I:s)$ is an $n$-ideal of $R$.

\item If $I$ is an $S$-$n$-ideal and $S\subseteq reg(R)$, then $(I:s)$ is an
$n$-ideal of $R$ for any $S$-element $s$ of $I$.
\end{enumerate}

\begin{proof}
(1) Suppose that $(I:s)$ is an $n$-ideal of $R$ for some $s\in S$. We show
that $s$ is an $S$-element of $I$. Let $a,b\in R$ with $ab\in I$ and
$sa\notin\sqrt{0}.$ Then $ab\in(I:s)$ and $a\notin\sqrt{0}$ imply that
$b\in(I:s)$.Thus, $sb\in I$ and $I$ is an $S$-$n$-ideal.

(2) Suppose $a,b\in R$ with $ab\in(I:s)$. Then $a(sb)\in I$ which implies
$sa\in\sqrt{0}$ or $s^{2}b\in I$. Suppose $sa\in\sqrt{0}$. Since $(\sqrt
{0}:s)$ is an $n$-ideal, $(\sqrt{0}:s)=\sqrt{0}$ by \cite[Proposition
2.3]{Tekir} and so $a\in\sqrt{0}$. Now, suppose $s^{2}b\in I.$ If $sb\notin
I$, then since $I$ is an $S$-$n$-ideal, $s^{3}\in\sqrt{0}$ and so $s\in
\sqrt{0}$ which contradicts the assumption that $(\sqrt{0}:s)$ is proper.
Thus, $sb\in I$ and $b\in(I:s)$ as needed.

(3) Suppose $S\subseteq reg(R)$ and $I$ is an $S$-$n$-ideal. Let $a,b\in R$
with $ab\in(I:s)$ so that $a(sb)\in I$ . If $sa\in\sqrt{0}$, then $s^{m}%
a^{m}=0$ for some integer $m$. Since $S\subseteq reg(R)$, we get $a^{m}=0$ and
so $a\in\sqrt{0}$. If $s^{2}b\in I$, then similar to the proof of (2) we
conclude that $b\in(I:s)$.
\end{proof}

Note that the conditions that $(\sqrt{0}:s)$ is an $n$-ideal in (2) and
$S\subseteq reg(R)$ in (3) of Proposition \ref{(I:s)} are crucial. Indeed,
consider $R=%
\mathbb{Z}
_{12}$, $S=\{\overline{1},\overline{3},\overline{9}\}$. We showed in Example
\ref{e1} that $I=<\overline{4}>$ is an $S$-$n$-ideal which is not an
$n$-ideal, and so $(I:\overline{3})=I$ is not an $n$-ideal. Here, observe that
$S\nsubseteq reg(R)$ and $(\sqrt{0}:3)=<\overline{2}>$ is not an $n$-ideal of
$%
\mathbb{Z}
_{12}$.

\begin{proposition}
\label{reg}Let $S\subseteq reg(R)$ be a multiplicatively closed subset of a
ring $R$ and $I$ be an $S$-prime ideal of $R$. Then $I$ is an $S$-$n$-ideal if
and only if $(I:s)=\sqrt{0}$ for some $s\in S$.
\end{proposition}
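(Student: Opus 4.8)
The plan is to run both implications through the colon ideal $(I:s)$, using the crucial fact that disjointness $I\cap S=\emptyset$ forces $t^{k}\notin I$ for every $t\in S$ and every $k\geq 1$.

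For the forward implication, suppose $I$ is an $S$-$n$-ideal. Since $S\subseteq reg(R)$, Proposition \ref{p1}(1) already yields $I\subseteq\sqrt{0}$. Now pick $t\in S$ witnessing that $I$ is $S$-prime, and I would show that $(I:t)=\sqrt{0}$. The inclusion $(I:t)\subseteq\sqrt{0}$ is immediate: if $tx\in I\subseteq\sqrt{0}$ then $(tx)^{n}=t^{n}x^{n}=0$ for some $n$, and regularity of $t^{n}$ forces $x^{n}=0$. For the reverse inclusion I would prove that $(I:t)$ is a prime ideal, whence it automatically contains $\sqrt{0}$. It is proper since $t\cdot 1=t\notin I$. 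To check the prime condition, let $ab\in(I:t)$, so $(ta)b=tab\in I$; applying the $S$-prime property to this product gives $t^{2}a\in I$ or $tb\in I$. In the first case, rewriting $t^{2}a=(t^{2})a\in I$ and applying the $S$-prime property once more gives $t^{3}\in I$ or $ta\in I$; since $t^{3}\in S$ is disjoint from $I$, we must have $ta\in I$, i.e. $a\in(I:t)$. Hence $(I:t)$ is prime, so $\sqrt{0}\subseteq(I:t)$, and therefore $(I:t)=\sqrt{0}$, as required.

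For the converse, suppose $(I:s)=\sqrt{0}$ for some $s\in S$. Then $I\subseteq(I:s)=\sqrt{0}$. Let $t\in S$ witness that $I$ is $S$-prime; I claim the same $t$ makes $I$ an $S$-$n$-ideal. Indeed, if $ab\in I$ and $ta\notin\sqrt{0}$, the $S$-prime property forces $ta\in I$ or $tb\in I$; the alternative $ta\in I\subseteq\sqrt{0}$ would contradict $ta\notin\sqrt{0}$, so $tb\in I$, which is exactly the $S$-$n$-condition.

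The only step needing a little care is the primeness of $(I:t)$ in the forward direction, where one applies the $S$-prime condition twice and uses $I\cap S=\emptyset$ to discard the degenerate alternative $t^{3}\in I$; everything else is routine bookkeeping. Alternatively, one could deduce $(I:t)\subseteq\sqrt{0}$ from Proposition \ref{(I:s)}(3) together with the fact that a proper $n$-ideal is contained in $\sqrt{0}$ (\cite[Proposition 2.3]{Tekir}), after first replacing $t$ by the product of an $S$-element of $I$ and an element witnessing $S$-primeness, which still serves both roles; but the regularity argument above is more direct.
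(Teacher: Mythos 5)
Your proof is correct, and it takes a genuinely more direct route than the paper's. For the forward direction the paper first upgrades $(I:s_{1})$ to an $n$-ideal via Proposition \ref{(I:s)}(3), separately gets a prime $(I:s_{2})$ from the $S$-prime hypothesis, then does some work to show both properties survive multiplying the witnesses so that $(I:s_{1}s_{2})$ is simultaneously prime and an $n$-ideal, and finally invokes Tekir's result that a prime $n$-ideal equals $\sqrt{0}$. You observe instead that the only thing the $S$-$n$-hypothesis contributes is $I\subseteq\sqrt{0}$ (Proposition \ref{p1}(1)); after that, a single $S$-prime witness $t$ already gives $(I:t)\subseteq\sqrt{0}$ by regularity of $t^{n}$ and $(I:t)\supseteq\sqrt{0}$ because $(I:t)$ is prime -- a fact you reprove by hand (it is essentially one direction of \cite[Proposition 1]{S-prime}, which the paper simply cites), correctly discarding the alternative $t^{3}\in I$ via $I\cap S=\emptyset$. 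This avoids the two-witness bookkeeping and produces the cleaner conclusion that the $S$-prime witness itself works. Your converse is likewise more economical: rather than showing $(I:s')=\sqrt{0}$ is a prime $n$-ideal and appealing to Proposition \ref{(I:s)}(1), you verify the $S$-$n$-condition directly from $I\subseteq(I:s)=\sqrt{0}$ and $S$-primeness; notably this half does not even use $S\subseteq reg(R)$, which sharpens the statement slightly (regularity is genuinely needed only for the forward implication, consistent with Example \ref{e3}). Both arguments are sound; yours trades the paper's reliance on the $n$-ideal machinery for elementary manipulations with the nilradical.
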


\begin{proof}
Suppose $I$ is an $S$-$n$-ideal of $R$ and $s_{1}$ be an $S$-element of $I$.
Then $(I:s_{1})$ is an $n$-ideal of $R$ by Proposition \ref{(I:s)}. Moreover,
$(I:ts_{1})$ is an $n$-ideal for all $t\in S$. Indeed, if $ab\in(I:ts_{1})$
for $a,b\in R$, then $abts_{1}\in I$ and so either $s_{1}^{2}a\in\sqrt{0}$ or
$s_{1}tb\in I$. If $s_{1}^{2}a\in\sqrt{0}$, then $a\in\sqrt{0}$ as $S\subseteq
reg(R)$. Otherwise, we have $b\in(I:ts_{1})$ as needed. Since $I$ is an
$S$-prime ideal of $R$, $(I:s_{2})$ is a prime ideal of $R$ where $s_{2}\in S$
such that whenever $ab\in I$\ for $a,b\in R$, either $s_{2}a\in I$ or
$s_{2}b\in I$, \cite[Proposition 1]{S-prime}. Similar to the above argument,
we can also conclude that $(I:ts_{2})$ is a prime ideal for all $t\in S$. Now,
choose $s=s_{1}s_{2}$. Then $(I:s)$ is both a prime and an $n$-ideal of $R$
and so $(I:s)=\sqrt{0}$ by \cite[Proposition 2.8]{Tekir}. Conversely, suppose
$(I:s)=\sqrt{0}$ for some $s\in S$. Since $I$ is an $S$-prime ideal, then
$(I:s^{\prime})$ is a prime ideal of $R$ for some $s^{\prime}\in S$. Moreover,
if $a\in(I:s^{\prime})$, then $as^{\prime}\in I\subseteq(I:s)\subseteq\sqrt
{0}$ and so $a\in\sqrt{0}$ as $S\subseteq reg(R)$. Thus, $(I:s^{\prime}%
)=\sqrt{0}$ is a prime ideal and so it an $n$-ideal again by \cite[Proposition
2.8]{Tekir}. Therefore, $I$ is an $S$-$n$-ideal by Proposition \ref{(I:s)}.
\end{proof}

In the following example we justify that the condition $S\subseteq reg(R)$ can
not be omitted in Proposition \ref{reg}.

\begin{example}
\label{e3}The ideal $I=<\overline{2}>$ of $%
\mathbb{Z}
_{12}$ is prime and so $S$-prime for $S=\{\overline{1},\overline{3}%
,\overline{9}\}\nsubseteq reg(%
\mathbb{Z}
_{12})$. Moreover, one can directly see that $s=3$ is an $S$-element of $I$
and so $I$ is also an $S$-$n$-ideal of $%
\mathbb{Z}
_{12}$. But $(I:s)=I\neq\sqrt{0}$ for all $s\in S$.
\end{example}

A ring $R$ is said to be a UN-ring if every nonunit element is a product of a
unit and a nilpotent. Next, we obtain a characterization for rings in which
every proper ideal is an $S$-$n$-ideal where $S\subseteq reg(R).$

\begin{proposition}
\label{Every}Let $S\subseteq reg(R)$ be a multiplicatively closed subset of a
ring $R$. The following are equivalent.
\end{proposition}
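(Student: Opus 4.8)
The plan is to prove the listed conditions equivalent by leaning on Proposition \ref{p1}(1): when $S\subseteq reg(R)$, every $S$-$n$-ideal is automatically contained in $\sqrt{0}$, so the hypothesis that \emph{all} proper ideals are $S$-$n$-ideals pins down the ideal structure of $R$ completely. Concretely I expect the equivalence to read: every proper ideal of $R$ is an $S$-$n$-ideal $\iff$ every proper ideal of $R$ is an $n$-ideal $\iff$ $R$ is a UN-ring, and the argument will route through the structural fact that a UN-ring is precisely a local ring whose maximal ideal coincides with $\sqrt{0}$.

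First I would handle ``$R$ is a UN-ring $\Rightarrow$ every proper ideal is an $n$-ideal''. The preliminary step is to observe that in a UN-ring the set of nonunits equals $\sqrt{0}$: a product (hence any power of a product) of a unit and a nilpotent is nilpotent, so every nonunit lies in $\sqrt{0}$, while no nilpotent is a unit since $1\neq0$; thus the nonunits form the ideal $\sqrt{0}$, which is therefore the unique maximal ideal $\mathfrak{m}=\sqrt{0}$. Now if $I$ is proper and $ab\in I$ with $a\notin\sqrt{0}=\mathfrak{m}$, then $a\in U(R)$, so $b=a^{-1}(ab)\in I$; hence $I$ is an $n$-ideal. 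The implication ``every proper ideal is an $n$-ideal $\Rightarrow$ every proper ideal is an $S$-$n$-ideal'' is then immediate: if $I$ is proper and $a\in I\cap S$, then $a=a\cdot1\in I$ with $I$ an $n$-ideal and $1\notin I$ forces $a\in\sqrt{0}$, contradicting $a\in S\subseteq reg(R)$ (since $\sqrt{0}\subseteq Z(R)$); so every proper ideal is disjoint from $S$, and an $n$-ideal is an $S$-$n$-ideal with $S$-element $1$.

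For the remaining implication, assume every proper ideal of $R$ is an $S$-$n$-ideal. Then each proper ideal is disjoint from $S$ and, by Proposition \ref{p1}(1) together with $S\subseteq reg(R)$, is contained in $\sqrt{0}$. Applying this to an arbitrary maximal ideal $\mathfrak{m}$ gives $\mathfrak{m}\subseteq\sqrt{0}$, and since always $\sqrt{0}\subseteq\mathfrak{m}$ we get $\mathfrak{m}=\sqrt{0}$; hence $R$ is local with maximal ideal $\sqrt{0}$. Consequently every nonunit of $R$ is nilpotent, so it is the product $1\cdot(\text{nilpotent})$ of a unit and a nilpotent, and $R$ is a UN-ring, closing the cycle.

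The only genuinely non-routine point is the identification of UN-rings with local rings satisfying $\mathfrak{m}=\sqrt{0}$ --- equivalently, verifying that in a UN-ring the nonunits are closed under addition and multiplication by ring elements; once this structural description is available, the rest reduces to short computations and a single appeal to Proposition \ref{p1}(1).
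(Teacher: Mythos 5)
Your proof is correct, and it reaches the same three-way equivalence as the paper, but by a more self-contained route. The paper disposes of $(3)\Rightarrow(1)$ by citing \cite[Proposition 2.25]{Tekir}, and for $(2)\Rightarrow(3)$ it takes a prime ideal $I$, notes it is both $S$-prime and (by hypothesis) an $S$-$n$-ideal, invokes Proposition \ref{reg} to get $I\subseteq(I:s)=\sqrt{0}$, concludes that $\sqrt{0}$ is the unique prime ideal, and then cites \cite[Proposition 2 (3)]{C} for the UN-ring conclusion. You instead prove the structural fact directly (a UN-ring is exactly a local ring with maximal ideal $\sqrt{0}$, since a product of a unit and a nilpotent is nilpotent), which makes $(3)\Rightarrow(1)$ a one-line computation, and for $(2)\Rightarrow(3)$ you apply the cheaper Proposition \ref{p1}(1) to a maximal ideal $\mathfrak{m}$ to get $\mathfrak{m}\subseteq\sqrt{0}\subseteq\mathfrak{m}$, avoiding Proposition \ref{reg} and the external citations altogether. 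A nice bonus of your write-up is that you explicitly verify the disjointness $I\cap S=\emptyset$ needed for an $n$-ideal to qualify as an $S$-$n$-ideal in $(1)\Rightarrow(2)$ (using $S\subseteq reg(R)$ and $\sqrt{0}\subseteq Z(R)$), a point the paper treats as immediate. Both arguments are sound; yours trades reliance on earlier propositions and the literature for a short direct verification of the UN-ring characterization, which you correctly flag as the only non-routine step.
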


\begin{enumerate}
\item Every proper ideal of $R$ is an $n$-ideal.

\item Every proper ideal of $R$ is an $S$-$n$-ideal.

\item $R$ is a UN-ring.
\end{enumerate}

\begin{proof}
Since (1)$\Rightarrow$(2) is straightforward and (3)$\Rightarrow$(1) is clear
by \cite[Proposition 2.25]{Tekir}, we only need to prove (2)$\Rightarrow$(3).

(2)$\Rightarrow$(3). Let $I$ be a prime ideal of $R$. Then $I$ is an $S$-prime
and from our assumption, it is also an $S$-$n$-ideal. Thus $I\subseteq
(I:s)=\sqrt{0}$ is a prime ideal of $R$ by Proposition \ref{reg}. Thus
$\sqrt{0}$ is the unique prime ideal of $R$ and so $R$ is a UN-ring by
\cite[Proposition 2 (3)]{C}.
\end{proof}

The equivalence of (1) and (2) In Proposition \ref{Every} need not be true
if\ $S\nsubseteq reg(R)$.

\begin{example}
Consider the ring $%
\mathbb{Z}
_{6}$ and let $S=\left\{  1,3\right\}  $. If $I=\left\langle \bar
{0}\right\rangle $ or $\left\langle \bar{2}\right\rangle $, then a simple
computations can show that $I$ is an $S$-$n$-ideal of $%
\mathbb{Z}
_{6}$. However, $%
\mathbb{Z}
_{6}$ has no proper $n$-ideals, \cite[Example 2.2]{Tekir}.
\end{example}

A ring $R$ is said to be von Neumann regular if for all $a\in R$, there exists
an element $b\in R$ such that $a=a^{2}b$.

\begin{proposition}
\label{integ}Let $S\subseteq reg(R)$ be a multiplicatively closed subset of a
ring $R.$
\end{proposition}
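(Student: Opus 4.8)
The plan is to push everything through two facts that are already available: a von Neumann regular ring is reduced, so $\sqrt{0}=0$; and, by Proposition \ref{p1}(1), when $S\subseteq reg(R)$ every $S$-$n$-ideal of $R$ is contained in $\sqrt{0}$. Combining these immediately collapses any $S$-$n$-ideal of a von Neumann regular $R$ to the zero ideal, after which the only remaining question is when $0$ itself is an $S$-$n$-ideal, and this is governed by Proposition \ref{p1}(3) together with the known theory of $n$-ideals.

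First I would verify the reducedness claim: if $a\in\sqrt{0}$, say $a^{k}=0$, choose $b$ with $a=a^{2}b$; a short induction gives $a=a^{n+1}b^{n}$ for all $n\geq 1$, whence $a=a^{k}b^{k-1}=0$, so $\sqrt{0}=0$ in a von Neumann regular ring. Now suppose $R$ is von Neumann regular and admits an $S$-$n$-ideal $I$ (the weakest hypothesis in the list). Since $S\subseteq reg(R)$, Proposition \ref{p1}(1) gives $I\subseteq\sqrt{0}=0$, hence $I=0$; so $0$ is an $S$-$n$-ideal of $R$. By Proposition \ref{p1}(3) this is equivalent to $0$ being an $n$-ideal of $R$, which, since $\sqrt{0}=0$, forces $R$ to be an integral domain: from $ab=0$ with $a\notin\sqrt{0}$, i.e.\ $a\neq 0$, we get $b\in 0$, so $R$ has no nonzero zero-divisors. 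Finally, a von Neumann regular integral domain is a field, because for $0\neq a$ we may write $a=a^{2}b$ and cancel $a$ to obtain $1=ab$, so $a\in U(R)$.

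For the reverse direction I would note that a field is von Neumann regular (take $b=a^{-1}$ when $a\neq 0$, and $b$ arbitrary when $a=0$), that its only proper ideal $0$ is maximal, hence prime, hence an $n$-ideal (with $\sqrt{0}=0$, the condition for $0$ to be an $n$-ideal is precisely the absence of nonzero zero-divisors), and therefore an $S$-$n$-ideal for every multiplicatively closed $S$ not meeting $0$; in particular $R$ has an $S$-$n$-ideal. This closes the circle of equivalences. If the statement is instead phrased as a single biconditional such as ``$R$ is a field if and only if $R$ is a von Neumann regular ring admitting an $S$-$n$-ideal'', the very same computations supply both implications.

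I do not anticipate a genuine obstacle here; the only points that need care are invoking Proposition \ref{p1} exactly in the regime $S\subseteq reg(R)$ in which it is stated, and establishing that $R$ is reduced before using Proposition \ref{p1}(1) to collapse $I$ to $0$. Should the proposition carry additional equivalent items under the standing von Neumann regular hypothesis (for instance ``$R$ is an integral domain'' or ``$Z(R)=\sqrt{0}$''), they slot into the chain above without any new idea.
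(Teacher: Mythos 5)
Your argument for item (2) of the proposition is correct and is essentially the paper's route: the paper also reduces ``$0$ is an $S$-$n$-ideal'' to ``$0$ is an $n$-ideal'' via Proposition \ref{p1}(3) and then cites the known characterization of fields among von Neumann regular rings by the $n$-ideal property of $0$ (here you unpack that citation into an elementary computation, which is fine). Your verification that a von Neumann regular ring is reduced and that a von Neumann regular domain is a field are both sound.

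The genuine gap is that the proposition has a second, independent item which you do not prove: for a \emph{reduced} ring $R$ (not assumed von Neumann regular), $R$ is an integral domain if and only if there exists an ideal of $R$ that is simultaneously $S$-prime and an $S$-$n$-ideal. Your closing remark that any additional equivalent items ``slot into the chain above'' does not cover this, both because that item does not sit under the von Neumann regular hypothesis you are standing on, and because it involves the notion of $S$-prime ideals, which your proposal never touches. The paper proves it by combining Proposition \ref{reg} (an $S$-prime ideal is an $S$-$n$-ideal iff $(I:s)=\sqrt{0}$ for some $s\in S$) with the fact from \cite{Tekir} that $\sqrt{0}$ is prime iff it is an $n$-ideal. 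It is worth noting that your own collapse argument would in fact adapt: for reduced $R$ with $S\subseteq reg(R)$, Proposition \ref{p1}(1) forces any $S$-$n$-ideal $I$ to satisfy $I\subseteq\sqrt{0}=0$, so $I=0$, and then Proposition \ref{p1}(3) makes $0$ an $n$-ideal, which for $\sqrt{0}=0$ says exactly that $R$ is a domain; the forward direction only needs that $0=\sqrt{0}$ is prime, hence an $n$-ideal, hence both $S$-prime and an $S$-$n$-ideal. This would give a proof of item (1) that bypasses Proposition \ref{reg} entirely (and shows the $S$-prime hypothesis is only needed for the trivial direction), but as submitted that argument is not written down, so item (1) remains unproved.
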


\begin{enumerate}
\item Let $R$ be a reduced ring. Then $R$ is an integral domain if and only if
there exists an $S$-prime ideal$~$of $R$ which is also an $S$-$n$-ideal

\item $R$ is a field if and only if $R$ is von Neumann regular and $0$ is an
$S$-$n$-ideal of $R$.
\end{enumerate}

\begin{proof}
(1) Let $R$ be an integral domain. Since $0=\sqrt{0}$ is prime, it is also an
$n$-ideal again by \cite[Corollary 2.9]{Tekir}. Thus $\sqrt{0}$ is both
$S$-prime and $S$-$n$-ideal of $R$, as required. Conversely, suppose $I$ is
both $S$-prime and $S$-$n$-ideal of $R$. Hence, from Proposition \ref{reg} we
conclude $(I:s)=\sqrt{0}$ which is an $n$-ideal by Proposition \ref{(I:s)}.
$\sqrt{0}=0$ is also a prime ideal by \cite[Corollary 2.9]{Tekir}, and thus
$R$ is an integral domain.

(2) Since $S\subseteq reg(R),$ from Proposition \ref{p1}, $0$ is an $S$%
-$n$-ideal of $R$ if and only if $0$ is an $n$-ideal. Thus, the claim is clear
by \cite[Theorem 2.15]{Tekir}.
\end{proof}

Let $n\in%
\mathbb{N}
$. For any prime $p$ dividing $n$, we denote the multiplicatively closed
subset $\left\{  1,\bar{p},\bar{p}^{2},\bar{p}^{3},...\right\}  $ of $%
\mathbb{Z}
_{n}$ by $S_{p}$. Next, for any $p$ dividing $n$, we clarify all $S_{p}$%
-$n$-ideals of $%
\mathbb{Z}
_{n}$.

\begin{theorem}
\label{Zn}Let $n\in%
\mathbb{N}
$.
\end{theorem}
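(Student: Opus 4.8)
The plan is to reduce the computation to a statement about ordinary $n$-ideals of a smaller residue ring via the Chinese Remainder Theorem. Write $n=p^{r}q$ with $r\geq 1$ and $\gcd(p,q)=1$, so that $\mathbb{Z}_{n}\cong\mathbb{Z}_{p^{r}}\times\mathbb{Z}_{q}$; under this isomorphism $S_{p}$ corresponds to $\{(\bar{p}^{\,m},\bar{p}^{\,m}):m\geq 0\}$, whose first coordinate is nilpotent in $\mathbb{Z}_{p^{r}}$ (and equals $0$ once $m\geq r$) while its second coordinate is a unit of $\mathbb{Z}_{q}$. Every ideal of $\mathbb{Z}_{n}$ is of the form $I=\langle\bar{d}\rangle$ with $d\mid n$; writing $d=p^{c}e$ with $\gcd(p,e)=1$, this becomes $I\cong\langle\overline{p^{c}}\rangle\times\langle\bar{e}\rangle$. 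Since a unit of $\mathbb{Z}_{q}$ lies in $\langle\bar{e}\rangle$ only when $e=1$, I would first record that $I$ is disjoint from $S_{p}$ exactly when $e\geq 2$, i.e.\ when $d$ is not a power of $p$ (if $e=1$ then either $I=\mathbb{Z}_{n}$ or $I\cap S_{p}\neq\emptyset$).

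The core step is the equivalence: for $I=\langle\bar{d}\rangle$ with $e\geq 2$, $I$ is an $S_{p}$-$n$-ideal of $\mathbb{Z}_{n}$ if and only if $\langle\bar{e}\rangle$ is an $n$-ideal of $\mathbb{Z}_{q}$. For ``only if'', let $s=\overline{p^{\,m}}$ be an $S$-element of $I$; given $a_{2}b_{2}\in\langle\bar{e}\rangle$ with $a_{2}\notin\sqrt{0_{\mathbb{Z}_{q}}}$, apply the defining property to $a=(1,a_{2})$ and $b=(0,b_{2})$: then $ab=(0,a_{2}b_{2})\in I$, while the second coordinate $\bar{p}^{\,m}a_{2}$ of $sa$ is non-nilpotent, so $sa\notin\sqrt{0}$, forcing $sb=(0,\bar{p}^{\,m}b_{2})\in I$; as $\bar{p}^{\,m}$ is a unit of $\mathbb{Z}_{q}$ this gives $b_{2}\in\langle\bar{e}\rangle$. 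For ``if'', take $s=\overline{p^{\,r}}$, whose first coordinate is $0$; if $ab\in I$ and $sa\notin\sqrt{0}$, then since the first coordinate of $sa$ is already nilpotent the second coordinate $a_{2}$ must be non-nilpotent in $\mathbb{Z}_{q}$, and $a_{2}b_{2}\in\langle\bar{e}\rangle$ together with $\langle\bar{e}\rangle$ being an $n$-ideal yields $b_{2}\in\langle\bar{e}\rangle$, whence $sb=(0,\overline{p^{\,r}}b_{2})\in I$.

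It remains to pin down the $n$-ideals of $\mathbb{Z}_{q}$. Any $n$-ideal lies in $\sqrt{0}$, so a proper one has the form $\langle\bar{e}\rangle$ with every prime dividing $q$ also dividing $e$. If $q$ has two distinct prime divisors $\ell_{1}\neq\ell_{2}$, then writing such an $e$ as the product of its $\ell_{1}$-part and the rest exhibits a factorization $e=xy$ with $xy\in\langle\bar{e}\rangle$ but $x\notin\sqrt{0_{\mathbb{Z}_{q}}}$ and $y\notin\langle\bar{e}\rangle$, so $\mathbb{Z}_{q}$ has no proper $n$-ideal; if $q=\ell^{t}$ is a prime power, a direct divisibility check shows every $\langle\overline{\ell^{b}}\rangle$ with $1\leq b\leq t$ is an $n$-ideal (this also follows from \cite[Proposition 2.3 and Corollary 2.9]{Tekir}). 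Since $q=n/p^{r}$ is a prime power precisely when $n$ has exactly two distinct prime factors, one of which is $p$, combining the above gives the classification: if $n$ is a power of $p$ or has at least three distinct prime factors then $\mathbb{Z}_{n}$ has no $S_{p}$-$n$-ideal, while if $n=p^{r}\ell^{t}$ the $S_{p}$-$n$-ideals of $\mathbb{Z}_{n}$ are precisely the ideals $\langle\overline{p^{a}\ell^{b}}\rangle$ with $0\leq a\leq r$ and $1\leq b\leq t$.

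The main obstacle is bookkeeping rather than depth: one must keep careful track of which coordinate in the Chinese Remainder decomposition is nilpotent and which is a unit, and choose the test elements $a=(1,a_{2})$, $b=(0,b_{2})$ so that the hypothesis ``$sa\notin\sqrt{0}$'' becomes automatic from the unit coordinate; the remaining verification of which $\langle\bar{e}\rangle$ are $n$-ideals of $\mathbb{Z}_{q}$ is routine but must be carried out (and may be split off as a preliminary item of the theorem).
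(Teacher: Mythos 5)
Your proof is correct, but it takes a genuinely different route from the paper's. The paper works directly inside $\mathbb{Z}_{n}$, case by case: for (1) it just notes every ideal meets $S_{p}$; for (2) it exhibits the $S$-element $s=\bar{p}_{1}^{t_{1}}$ for $I=\langle\bar{p}_{1}^{t_{1}}\bar{p}_{2}^{t_{2}}\rangle$ and verifies the definition by a divisibility argument; for (3) it produces the explicit witness $\bar{p}_{k}^{t_{k}}\cdot(\bar{p}_{1}^{t_{1}}\cdots\bar{p}_{k-1}^{t_{k-1}})\in I$ that defeats every $s\in S_{p_{1}}$. You instead split $\mathbb{Z}_{n}\cong\mathbb{Z}_{p^{r}}\times\mathbb{Z}_{q}$ by the Chinese Remainder Theorem and prove one clean equivalence --- $\langle\overline{p^{c}e}\rangle$ is an $S_{p}$-$n$-ideal iff $\langle\bar{e}\rangle$ is an ordinary $n$-ideal of $\mathbb{Z}_{q}$ --- after which all three cases follow from the (routine, and correctly handled) classification of $n$-ideals of $\mathbb{Z}_{q}$. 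Both directions of your equivalence check out: the test pair $a=(1,a_{2})$, $b=(0,b_{2})$ makes ``$sa\notin\sqrt{0}$'' automatic from the unit coordinate, and your choice $s=\overline{p^{r}}$ in the converse makes the first coordinate of $sa$ nilpotent for free (incidentally avoiding the degenerate case $t_{1}=0$, where the paper's choice $s=\bar{p}_{1}^{t_{1}}=1$ needs a small repair). What your approach buys is uniformity and a sharper conclusion --- you obtain the explicit list of $S_{p}$-$n$-ideals in case (2) rather than just the dichotomy; what the paper's approach buys is brevity and the fact that its computations carry over almost verbatim to the generalization in Theorem \ref{Zn,gen}, whereas your reduction would there require replacing the single factor $\mathbb{Z}_{q}$ by the product over the primes omitted from the multiplicative set.
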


\begin{enumerate}
\item If $n=p^{r}$ for some prime integer $p$ and $r\geq1$, then $%
\mathbb{Z}
_{n}$ has no $S_{p}$-$n$-ideals.

\item If $n=p_{1}^{r_{1}}p_{2}^{r_{2}}$ where $p_{1}$ and $p_{2}$ are distinct
prime integers and $r_{1},r_{2}\geq1$, then for all $i=1,2$, every ideal of $%
\mathbb{Z}
_{n}$ disjoint with $S_{p_{i}}$ is an $S_{p_{i}}$-$n$-ideal.

\item If $n=p_{1}^{r_{1}}p_{2}^{r_{2}}...p_{k}^{r_{k}}$ where $p_{1}%
,p_{2},...,p_{k}$ are distinct prime integers and $k\geq3$, then for all
$i=1,2,...,k$, $%
\mathbb{Z}
_{n}$ has no $S_{p_{i}}$-$n$-ideals.
\end{enumerate}

\begin{proof}
(1) Clear since $I\cap S_{p}\neq\phi$ for any ideal $I$ of $%
\mathbb{Z}
_{n}$.

(2) Let $I=\left\langle \bar{p}_{1}^{t_{1}}\bar{p}_{2}^{t_{2}}\right\rangle $
be an ideal of $%
\mathbb{Z}
_{n}$ distinct with $S_{p_{1}}$. Then we must have $t_{2}\geq1$. Choose
$s=\bar{p}_{1}^{t_{1}}\in S_{p_{1}}$ and let $ab\in I$ for $a,b\in%
\mathbb{Z}
_{n}$. If $a\in\left\langle \bar{p}_{2}\right\rangle $, then $sa\in
\left\langle \bar{p}_{1}\bar{p}_{2}\right\rangle =\sqrt{0}$. If $a\notin
\left\langle \bar{p}_{2}\right\rangle $, then clearly $b\in\left\langle
\bar{p}_{2}^{t_{2}}\right\rangle $ and so $sb\in I$. Therefore, $I$ is an
$S_{p_{1}}$-$n$-ideal of $%
\mathbb{Z}
_{n}$. By a similar argument, we can show that every ideal of $%
\mathbb{Z}
_{n}$ distinct with $S_{p_{2}}$ is an $S_{p_{2}}$-$n$-ideal.

(3) Let $I=\left\langle \bar{p}_{1}^{t_{1}}\bar{p}_{2}^{t_{2}}...\bar{p}%
_{k}^{t_{k}}\right\rangle $ be an ideal of $%
\mathbb{Z}
_{n}$ distinct with $S_{p_{1}}$. Then there exists $j\neq1$ such that
$t_{j}\geq1$, say, $j=k$. Thus, $\bar{p}_{k}^{t_{k}}(\bar{p}_{1}^{t_{1}}%
\bar{p}_{2}^{t_{2}}...\bar{p}_{k-1}^{t_{k-1}})\in I$ but $s\bar{p}_{k}^{t_{k}%
}\notin\sqrt{0}$ and $s(\bar{p}_{1}^{t_{1}}\bar{p}_{2}^{t_{2}}...\bar{p}%
_{k-1}^{t_{k-1}})\notin I$ for all $s\in S_{p_{1}}$. Therefore, $I$ is not an
$S_{p_{1}}$-$n$-ideal of $%
\mathbb{Z}
_{n}$. Similarly, $I$ is not an $S_{p_{i}}$-$n$-ideal of $%
\mathbb{Z}
_{n}$ for all $i=1,2,...,k$.
\end{proof}

\begin{corollary}
Let $n\in%
\mathbb{N}
$. Then for any prime $p$ dividing $n$, either $%
\mathbb{Z}
_{n}$ has no $S_{p}$-$n$-ideals or every every ideal of $%
\mathbb{Z}
_{n}$ disjoint with $S_{p}$ is an $S_{p}$-$n$-ideal.
\end{corollary}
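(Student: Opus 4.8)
The plan is to derive this immediately from Theorem \ref{Zn} by splitting into cases according to the number of distinct prime divisors of $n$. Write $n = p_1^{r_1} p_2^{r_2} \cdots p_k^{r_k}$ with $p_1, \dots, p_k$ distinct primes and $r_i \geq 1$, and let $p$ be a prime dividing $n$, so $p = p_i$ for some $i \in \{1, \dots, k\}$.

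First I would handle $k = 1$: then $n = p^r$, and Theorem \ref{Zn}(1) says $\mathbb{Z}_n$ has no $S_p$-$n$-ideals, so the first alternative of the corollary holds. Next, for $k = 2$, we have $n = p_1^{r_1} p_2^{r_2}$ with $p = p_i$; Theorem \ref{Zn}(2) asserts that every ideal of $\mathbb{Z}_n$ disjoint with $S_{p_i} = S_p$ is an $S_p$-$n$-ideal, which is exactly the second alternative. Finally, for $k \geq 3$, Theorem \ref{Zn}(3) gives that $\mathbb{Z}_n$ has no $S_{p_j}$-$n$-ideals for any $j \in \{1,\dots,k\}$; in particular it has no $S_p$-$n$-ideals, so again the first alternative holds. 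Since every prime divisor $p$ of $n$ falls into exactly one of these three cases, the dichotomy follows.

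There is essentially no obstacle here: the corollary is a repackaging of Theorem \ref{Zn}, and the only thing to verify is that the three cases of that theorem are exhaustive (they are, being indexed by $k = 1$, $k = 2$, and $k \geq 3$) and that in each case the conclusion matches one of the two stated alternatives. Thus the proof is just the observation "apply Theorem \ref{Zn}" together with this case check, and can be stated in two or three lines.
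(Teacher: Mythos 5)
Your proof is correct and is exactly the intended argument: the corollary is an immediate case analysis on the number $k$ of distinct prime divisors of $n$, with parts (1), (2), (3) of Theorem \ref{Zn} covering $k=1$, $k=2$, and $k\geq 3$ respectively, which is why the paper states it without proof.
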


In general if $n=p_{1}^{r_{1}}p_{2}^{r_{2}}...p_{k}^{r_{k}}$ where $r_{i}%
\geq1$ for all $i$, then
\[
S_{p_{1}p_{2}...p_{i-1}p_{i+1}...p_{k}}=\left\{  \bar{p}_{1}^{m_{1}}\bar
{p}_{2}^{m_{2}}...\bar{p}_{i-1}^{m_{i-1}}\bar{p}_{i+1}^{m_{i+1}}...\bar{p}%
_{k}^{m_{k}}:m_{j}\in%
\mathbb{N}
\cup\left\{  0\right\}  \right\}
\]
is also a multiplicatively closed subset of $%
\mathbb{Z}
_{n}$ for all $i$. Next, we generalize Theorem \ref{Zn}.

\begin{theorem}
\label{Zn,gen}Let $n=p_{1}^{r_{1}}p_{2}^{r_{2}}...p_{k}^{r_{k}}$ where
$p_{1},p_{2},...,p_{k}$ are distinct prime integers and $r_{i}\geq1$ for all
$i$.
\end{theorem}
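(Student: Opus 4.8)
The plan is to carry out everything inside the Chinese Remainder decomposition $\mathbb{Z}_{n}\cong\mathbb{Z}_{p_{1}^{r_{1}}}\times\cdots\times\mathbb{Z}_{p_{k}^{r_{k}}}$. First I would recall the standard facts: every ideal of $\mathbb{Z}_{n}$ is $I=\langle\bar{d}\rangle$ for a divisor $d=p_{1}^{t_{1}}p_{2}^{t_{2}}\cdots p_{k}^{t_{k}}$ of $n$ with $0\leq t_{j}\leq r_{j}$; one has $\sqrt{0}=\langle\overline{p_{1}p_{2}\cdots p_{k}}\rangle$; and if for $\bar{x}\in\mathbb{Z}_{n}$ we write $v_{j}(\bar{x})=\min\{v_{p_{j}}(\bar{x}),r_{j}\}$ for the truncated $p_{j}$-adic valuation of its $j$-th CRT coordinate, then $\bar{x}\in\langle\bar{d}\rangle$ if and only if $v_{j}(\bar{x})\geq t_{j}$ for all $j$, and $v_{j}(\bar{x}\bar{y})=\min\{v_{j}(\bar{x})+v_{j}(\bar{y}),r_{j}\}$. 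Abbreviating $S:=S_{p_{1}\cdots p_{i-1}p_{i+1}\cdots p_{k}}$, the first real step is to determine when an ideal is disjoint from $S$: every $s\in S$ has $v_{i}(\bar{s})=0$, whereas for each $j\neq i$ the set $S$ contains elements with $v_{j}(\bar{s})=r_{j}$; combining these, $I\cap S=\emptyset$ exactly when $t_{i}\geq1$.

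Now fix such an $I$ (so $t_{i}\geq1$). The key move is to pick the $S$-element $s=\bar{p}_{1}^{\,r_{1}}\cdots\bar{p}_{i-1}^{\,r_{i-1}}\bar{p}_{i+1}^{\,r_{i+1}}\cdots\bar{p}_{k}^{\,r_{k}}\in S$ (any element of $S$ with $v_{j}(\bar{s})=r_{j}$ for every $j\neq i$ works equally well). For an arbitrary $\bar{x}\in\mathbb{Z}_{n}$ the multiplication rule gives $v_{j}(\bar{s}\bar{x})=r_{j}$ for all $j\neq i$ and $v_{i}(\bar{s}\bar{x})=v_{i}(\bar{x})$, so multiplying by $s$ saturates every coordinate except the $i$-th, which it leaves untouched. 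Suppose $\bar{a}\bar{b}\in I$ and $\bar{s}\bar{a}\notin\sqrt{0}$. Since $v_{j}(\bar{s}\bar{a})=r_{j}\geq1$ for $j\neq i$ (here $r_{j}\geq1$ is used), the only coordinate that can violate the radical condition is the $i$-th, forcing $v_{i}(\bar{s}\bar{a})=v_{i}(\bar{a})=0$. Then $\bar{a}\bar{b}\in I$ yields $t_{i}\leq v_{i}(\bar{a}\bar{b})=\min\{v_{i}(\bar{a})+v_{i}(\bar{b}),r_{i}\}=v_{i}(\bar{b})$, while $v_{j}(\bar{s}\bar{b})=r_{j}\geq t_{j}$ for all $j\neq i$; hence $v_{j}(\bar{s}\bar{b})\geq t_{j}$ for every $j$, i.e.\ $\bar{s}\bar{b}\in I$. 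Therefore $I$ is an $S$-$n$-ideal with $s$ as an $S$-element; since the index $i$ was arbitrary, this establishes the claim for all $i$ (for $k=2$ one recovers Theorem \ref{Zn}(2), and the degenerate small-$k$ situations are either excluded from the hypothesis or trivial). If one prefers, the same verification can be phrased through Theorem \ref{char} using the principal ideals $\langle a\rangle$ and $\langle b\rangle$, with no change.

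The one point that actually needs care is the bookkeeping imposed by $\mathbb{Z}_{n}$ not being a domain: ``divisibility by $p_{j}$'' has to be read coordinatewise through the CRT isomorphism rather than as a single relation in $\mathbb{Z}$. Concretely, one must check carefully that saturating the coordinates $j\neq i$ of $\bar{s}\bar{a}$ (which places them inside $\sqrt{0}$) is exactly what localizes the failure of $\bar{s}\bar{a}\in\sqrt{0}$ to the $i$-th coordinate, and that the truncation in $v_{i}(\bar{a}\bar{b})=\min\{v_{i}(\bar{a})+v_{i}(\bar{b}),r_{i}\}$ together with $v_{i}(\bar{a})=0$ shifts the full exponent $t_{i}$ onto $\bar{b}$. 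Everything beyond that is a routine unpacking of the definition of an $S$-$n$-ideal.
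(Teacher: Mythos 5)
Your argument correctly establishes item (2) of the theorem, and it does so by essentially the same route as the paper: fix an $S$-element whose $j$-th CRT coordinate is saturated for every $j\neq i$, observe that multiplication by it forces any failure of $\bar s\bar a\in\sqrt0$ to occur in the $i$-th coordinate, and then push the exponent $t_i$ onto $\bar b$. (In fact your choice $s=\bar p_1^{\,r_1}\cdots\bar p_{i-1}^{\,r_{i-1}}\bar p_{i+1}^{\,r_{i+1}}\cdots\bar p_k^{\,r_k}$ is slightly more robust than the paper's $s=\bar p_1^{\,t_1}\cdots\bar p_{k-1}^{\,t_{k-1}}$, since the latter does not obviously place $sa$ in $\sqrt0$ when some $t_j=0$; your version avoids that issue.) Your disjointness criterion $I\cap S=\emptyset\iff t_i\geq1$ is also correct.

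However, the theorem has three parts, and your proposal only proves the second. The remark that ``the degenerate small-$k$ situations are either excluded from the hypothesis or trivial'' does not cover them: parts (1) and (3) concern \emph{different multiplicative sets} for the same $n$, not small $k$. Part (1), that $\mathbb Z_n$ has no $S_{p_1p_2\cdots p_k}$-$n$-ideals, is quick (every ideal $\langle\bar d\rangle$ with $d\mid n$ contains $\bar d\in S_{p_1\cdots p_k}$, so no ideal is even disjoint from $S$), but it must be said. Part (3) is a substantive non-existence claim of a different character from (2): when $S=S_{p_1\cdots p_m}$ omits at least two primes, say $p_{k-1}$ and $p_k$, and $I=\langle\bar p_1^{\,t_1}\cdots\bar p_k^{\,t_k}\rangle$ is disjoint from $S$ with (say) $t_k\geq1$, the factorization $\bar p_k^{\,t_k}\cdot(\bar p_1^{\,t_1}\cdots\bar p_{k-1}^{\,t_{k-1}})\in I$ defeats every $s\in S$: the product $s\bar p_k^{\,t_k}$ has $p_{k-1}$-valuation $0$ (this is exactly where a \emph{second} missing prime is needed) and so is not nilpotent, while $s\bar p_1^{\,t_1}\cdots\bar p_{k-1}^{\,t_{k-1}}$ has $p_k$-valuation $0<t_k$ and so is not in $I$. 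Your saturation trick from (2) cannot be adapted here precisely because no element of $S$ can saturate the two missing coordinates; you need to supply this counterexample argument to complete the proof.
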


\begin{enumerate}
\item $%
\mathbb{Z}
_{n}$ has no $S_{p_{1}p_{2}...p_{k}}$-$n$-ideals.

\item For $i=1,2,...,k$, every ideal of $%
\mathbb{Z}
_{n}$ disjoint with $S_{p_{1}p_{2}...p_{i-1}p_{i+1}...p_{k}}$ is an
$S_{p_{1}p_{2}...p_{i-1}p_{i+1}...p_{k}}$-$n$-ideal.

\item Let $k\geq3$. If $m\leq k-2$, then $%
\mathbb{Z}
_{n}$ has no $S_{p_{1}p_{2}...p_{m}}$-$n$-ideals.
\end{enumerate}

\begin{proof}
(1) This is clear since $I\cap S_{p_{1}p_{2}...p_{k}}\neq\phi$ for any ideal
$I$ of $%
\mathbb{Z}
_{n}$.

(2) With no loss of generality, we may choose $i=k$. Let $I=\left\langle
\bar{p}_{1}^{t_{1}}\bar{p}_{2}^{t_{2}}...\bar{p}_{k}^{t_{k}}\right\rangle $ be
an ideal of $%
\mathbb{Z}
_{n}$ disjoint with $S_{p_{1}p_{2}...p_{k-1}}$. Then we must have $t_{k}\geq
1$. Choose $s=\bar{p}_{1}^{t_{1}}\bar{p}_{2}^{t_{2}}...\bar{p}_{k-1}^{t_{k-1}%
}\in S_{p_{1}p_{2}...p_{k-1}}$ and let $a,b\in%
\mathbb{Z}
_{n}$ such that $ab\in I$. If $a\in\left\langle \bar{p}_{k}\right\rangle $,
then $sa\in\left\langle \bar{p}_{1}\bar{p}_{2}...\bar{p}_{k}\right\rangle
=\sqrt{0}$. If $a\notin\left\langle \bar{p}_{k}\right\rangle $, then we must
have $b\in\left\langle \bar{p}_{k}^{t_{k}}\right\rangle $. Thus, $sb\in I$ and
$I$ is an $S_{p_{1}p_{2}...p_{k-1}}$-$n$-ideal of $%
\mathbb{Z}
_{n}$.

(3) Assume $m=k-2$ and let $I=\left\langle \bar{p}_{1}^{t_{1}}\bar{p}%
_{2}^{t_{2}}...\bar{p}_{k}^{t_{k}}\right\rangle $ be an ideal of $%
\mathbb{Z}
_{n}$ disjoint with $S_{p_{1}p_{2}...p_{k-2}}$. Then at least one of $t_{k-1}$
and $t_{k}$ is nonzero, say, $t_{k}\gneqq0$. Hence, $\bar{p}_{k}^{t_{k}}%
(\bar{p}_{1}^{t_{1}}\bar{p}_{2}^{t_{2}}...\bar{p}_{k-1}^{t_{k-1}})\in I$ but
clearly $s\bar{p}_{k}^{t_{k}}\notin\sqrt{0}$ and $s(\bar{p}_{1}^{t_{1}}\bar
{p}_{2}^{t_{2}}...\bar{p}_{k-1}^{t_{k-1}})\notin I$ for all $s\in
S_{p_{1}p_{2}...p_{k-2}}$. Therefore, $%
\mathbb{Z}
_{n}$ has no $S_{p_{1}p_{2}...p_{k-2}}$-$n$-ideals. A similar proof can be
used if $1\leq m\lneqq k-2$.
\end{proof}

An ideal $I$ of a ring $R$ is called a maximal $S$-$n$-ideal if there is no
$S$-$n$-ideal of $R$ that contains $I$ properly. In the following proposition,
we observe the relationship between maximal $S$-$n$-ideals and $S$-prime ideals.

\begin{proposition}
\label{max}Let $S\subseteq reg(R)$ be a multiplicatively closed subset of a
ring $R$. If $I$ is a maximal $S$-$n$-ideal of $R$, then $I$ is $S$-prime (and
so $(I:s)=\sqrt{0}$ for some $s\in S$).
\end{proposition}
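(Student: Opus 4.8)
The plan is to prove that, under the stated hypotheses, any maximal $S$-$n$-ideal is forced to equal $\sqrt{0}$, and that $\sqrt{0}$ itself is $S$-prime; the conclusion then follows at once.

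To begin, fix an $S$-element $s\in S$ of $I$. Since $S\subseteq reg(R)$, Proposition \ref{p1}(1) gives $I\subseteq\sqrt{0}$, and moreover $\sqrt{0}\cap S=\emptyset$, because every nonzero nilpotent element lies in $Z(R)$ and $0\notin reg(R)$; hence $\sqrt{0}$ is a legitimate candidate to be an $S$-$n$-ideal.

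The key step is to show that $\sqrt{0}$ is $S$-prime, with the same $S$-element $s$. Given $a,b\in R$ with $ab\in\sqrt{0}$, pick $m\geq 1$ with $a^{m}b^{m}=(ab)^{m}=0\in I$. Applying the defining property of the $S$-$n$-ideal $I$ to the product $a^{m}\cdot b^{m}\in I$ yields $sa^{m}\in\sqrt{0}$ or $sb^{m}\in I\subseteq\sqrt{0}$; in either case a routine radical computation gives $sa\in\sqrt{0}$ or $sb\in\sqrt{0}$. Thus $\sqrt{0}$ is $S$-prime, and so, by Proposition \ref{p1}(2), it is an $S$-$n$-ideal of $R$.

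Finally, $\sqrt{0}$ is an $S$-$n$-ideal containing $I$, so maximality of $I$ forces $I=\sqrt{0}$; in particular $I$ is $S$-prime. For the parenthetical statement, $I$ is now both $S$-prime and an $S$-$n$-ideal with $S\subseteq reg(R)$, so Proposition \ref{reg} gives $(I:s)=\sqrt{0}$ for some $s\in S$ (equivalently, one checks directly that $(\sqrt{0}:s)=\sqrt{0}$ since $S\subseteq reg(R)$). The only real content is the middle step --- observing that the $S$-$n$-property of $I$, fed the element $0=(ab)^{m}$, upgrades to $S$-primeness of $\sqrt{0}$; after that, maximality of $I$ together with Propositions \ref{p1} and \ref{reg} do all the work.
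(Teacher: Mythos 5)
Your proof is correct, and it takes a genuinely different route from the paper's. The paper passes to the quotient ideal $(I:s)$: it invokes Proposition~\ref{(I:s)}(3) (using $S\subseteq reg(R)$) to see that $(I:s)$ is an $n$-ideal, argues that $(I:s)$ is in fact a \emph{maximal} $n$-ideal, then cites the external result that a maximal $n$-ideal equals $\sqrt{0}$ and is prime, and finally translates back to $S$-primeness of $I$ via the Hamed--Malek characterization ($I$ is $S$-prime iff $(I:s)$ is prime for some $s\in S$). You instead work directly with $\sqrt{0}$: feeding $a^{m}b^{m}=(ab)^{m}=0\in I$ into the $S$-$n$-condition for $I$ shows $\sqrt{0}$ is $S$-prime with the same $S$-element $s$ (the radical computation $s^{k}a^{mk}=0\Rightarrow(sa)^{mk}=s^{mk-k}\cdot s^{k}a^{mk}=0$ is fine), hence an $S$-$n$-ideal by Proposition~\ref{p1}(2), and maximality forces $I=\sqrt{0}$. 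Your preliminary checks ($I\subseteq\sqrt{0}$ from Proposition~\ref{p1}(1) and $\sqrt{0}\cap S=\emptyset$ because nilpotents are zero-divisors and $S\subseteq reg(R)$) are exactly what is needed to legitimize both applications. What your approach buys is self-containedness --- it avoids the cited theorem on maximal $n$-ideals and the $(I:s)$ machinery entirely --- and it delivers the stronger explicit conclusion $I=\sqrt{0}$, from which the parenthetical $(I:s)=\sqrt{0}$ follows either by Proposition~\ref{reg} or by the direct observation that $(\sqrt{0}:s)=\sqrt{0}$ when $s$ is regular. The paper's version, by contrast, reuses the already-developed correspondence between $S$-$n$-ideals and $n$-ideals of the form $(I:s)$, which keeps the proof short given that infrastructure.
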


\begin{proof}
Suppose $I$ is a maximal $S$-$n$-ideal of $R$ and $s\in S$ is an $S$-element
of $I$. Then $(I:s)$ is an $n$-ideal of $R$ by Proposition \ref{(I:s)}.
Moreover, $(I:s)$ is a maximal $n$-ideal of $R$. Indeed, if $(I:s)\subsetneq
J$ for some $n$-ideal (and so $S$-$n$-ideal) $J$ of $R$, then $I\subseteq
(I:s)\subsetneq J$ which is a contradiction. By \cite[Theorem 2.11]{Tekir},
$(I:s)=\sqrt{0}$ is a prime ideal of $R$ and so $I$ is an $S$-prime ideal by
\cite[Proposition 1]{S-prime}.
\end{proof}

\begin{proposition}
Let $S$ be a multiplicatively closed subset of a ring $R$ and $I$ be an ideal
of $R$ disjoint with $S$. If $I$ is an $S$-$n$-ideal, and $J$ is an ideal of
$R$ with $J\cap S\neq\emptyset$, then $IJ$ and $I\cap J~$are $S$-$n$-ideals of
$R$.
\end{proposition}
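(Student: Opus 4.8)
The plan is to show that $IJ$ and $I\cap J$ are $S$-$n$-ideals by exhibiting a single $S$-element that works for both. Let $s_1\in S$ be an $S$-element of $I$, and let $s_2\in J\cap S$. Put $s=s_1s_2\in S$. First I would check the disjointness requirement: since $IJ\subseteq I\cap J\subseteq I$ and $I\cap S=\emptyset$, both $IJ$ and $I\cap J$ are disjoint from $S$, so they are legitimate candidates to be $S$-$n$-ideals.

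Next I would verify the $S$-$n$-ideal condition for $I\cap J$ with the element $s$. Suppose $ab\in I\cap J$ for $a,b\in R$ with $sa\notin\sqrt0$. In particular $ab\in I$, and since $sa\notin\sqrt0$ forces $s_1a\notin\sqrt0$ (as $s_1a$ divides $sa$ up to the factor $s_2$, so if $s_1a$ were nilpotent then $sa=s_2(s_1a)$ would be too), the fact that $s_1$ is an $S$-element of $I$ gives $s_1b\in I$, hence $sb=s_2s_1b\in I$. On the other hand, $s_2\in J$ implies $sb=s_1s_2b\in J$ since $J$ is an ideal. Therefore $sb\in I\cap J$, as required.

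For $IJ$ I would argue the same way: if $ab\in IJ$ with $sa\notin\sqrt0$, then $ab\in IJ\subseteq I$, so as above $s_1b\in I$; and $s_2\in J$ gives $s_2\cdot(s_1b)\in IJ$ since $s_1b\in I$ and $s_2\in J$ and $IJ$ is generated by products of elements of $I$ with elements of $J$. Hence $sb=s_1s_2b\in IJ$. This completes both cases.

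I do not expect a serious obstacle here; the only point requiring a moment's care is the implication $sa\notin\sqrt0\Rightarrow s_1a\notin\sqrt0$, which is immediate from the contrapositive since $\sqrt0$ is closed under multiplication by ring elements. The choice of the combined $S$-element $s=s_1s_2$ is the one genuinely non-routine idea, and it is exactly what lets a single element simultaneously push products into $I$ (via $s_1$) and into $J$ (via $s_2$).
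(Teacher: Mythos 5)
Your proof is correct and follows essentially the same route as the paper's: both take the product of an $S$-element of $I$ with an element of $J\cap S$ as the combined $S$-element, and use $s_2\in J$ to push $s_1a$ into $\sqrt0$ (resp.\ $s_1b\in I$ into $IJ$ or $I\cap J$). The only cosmetic difference is that you argue in contrapositive form, while the paper states the two alternatives directly; the content is identical.
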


\begin{proof}
Let $s^{\prime}\in J\cap S$. Let $a,b\in R$ with $ab\in IJ.$ Since $ab\in I$,
we have $sa\in\sqrt{0}$ or $sb\in I$ where $s$ is an $S$-element of $I$.
Hence, $(s^{\prime}s)a\in J\sqrt{0}\subseteq\sqrt{0}$ or $(s^{\prime}s)b\in
IJ$. Thus, $IJ$ is an $S$-$n$-ideal of $R.$ The proof that $I\cap J~$is an
$S$-$n$-ideal is similar.
\end{proof}

\begin{proposition}
Let $S$ be a multiplicatively closed subset of a ring $R$ and $\left\{
I_{\alpha}:\alpha\in\Lambda\right\}  $ be a family of proper ideals of $R$.
\end{proposition}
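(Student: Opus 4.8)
The statement concerns a family $\{I_{\alpha}:\alpha\in\Lambda\}$ of proper ideals, and the natural assertions to prove are that $\bigcap_{\alpha\in\Lambda}I_{\alpha}$ is an $S$-$n$-ideal when the $I_{\alpha}$ admit a common $S$-element $s$, and that $\bigcup_{\alpha\in\Lambda}I_{\alpha}$ is an $S$-$n$-ideal when the family is a chain possessing a common $S$-element. In either case my plan is to fix that common element $s\in S$ at the outset and then simply unwind the definition of an $S$-$n$-ideal; the set-theoretic bookkeeping (that the relevant union or intersection is an ideal disjoint from $S$) is immediate, so the real content is only the uniform choice of $s$.

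For the intersection, put $I=\bigcap_{\alpha}I_{\alpha}$. It is an ideal, and since it lies inside each proper, $S$-disjoint $I_{\alpha}$ it is itself proper and disjoint from $S$, hence an admissible candidate. Given $a,b\in R$ with $ab\in I$ and $sa\notin\sqrt{0}$: for each $\alpha$ we have $ab\in I_{\alpha}$, and because $s$ is an $S$-element of $I_{\alpha}$ this forces $sb\in I_{\alpha}$; therefore $sb\in I$, which is exactly what is required.

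For the union, suppose $\{I_{\alpha}\}$ is totally ordered by inclusion and shares the $S$-element $s$, and put $I=\bigcup_{\alpha}I_{\alpha}$. As a union of a chain of ideals, $I$ is an ideal; it is proper because $1$ lies in no $I_{\alpha}$, and disjoint from $S$ because any $x\in I\cap S$ would lie in some $I_{\alpha}\cap S=\emptyset$. If $a,b\in R$ with $ab\in I$ and $sa\notin\sqrt{0}$, choose $\alpha_{0}$ with $ab\in I_{\alpha_{0}}$; since $s$ is an $S$-element of $I_{\alpha_{0}}$ we get $sb\in I_{\alpha_{0}}\subseteq I$, and we are done.

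I expect the only genuine obstacle to be the necessity of a \emph{common} $S$-element: for an infinite family there need not be a single $s\in S$ that works for every $I_{\alpha}$, and without such an $s$ neither an intersection nor a chain-union of $S$-$n$-ideals need remain an $S$-$n$-ideal, so this hypothesis is essential and cannot be circumvented by patching. For a finite family one could instead try $s=\prod_{\alpha}s_{\alpha}$, but then one must still verify that this product is an $S$-element of each $I_{\alpha}$; that verification is not automatic and runs through the kind of $\sqrt{0}$-absorption step used in the proof of Theorem \ref{char2} (pushing $s_{1}^{k}I\subseteq\sqrt{0}$ down to $s_{2}I\subseteq\sqrt{0}$ for an $S$-element $s_{2}$ of $\sqrt{0}$), which in turn needs $\sqrt{0}$ to be well behaved. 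Everything past the choice of $s$ is a direct application of the definition.
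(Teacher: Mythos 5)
The proposition you were asked about actually has two parts, both about intersections: (1) if every $I_{\alpha}$ ($\alpha\in\Lambda$) is an $S$-$n$-ideal, then $\bigcap_{\alpha\in\Lambda}I_{\alpha}$ is an $S$-$n$-ideal; and (2) if $\bigl(\bigcap_{\alpha\in\Omega}I_{\alpha}\bigr)\cap S\neq\emptyset$ for some $\Omega\subseteq\Lambda$ and the $I_{\alpha}$ with $\alpha\in\Lambda-\Omega$ are $S$-$n$-ideals, then $\bigcap_{\alpha\in\Lambda}I_{\alpha}$ is still an $S$-$n$-ideal. Your intersection argument is correct but proves only a weakened form of (1), because you impose a common $S$-element, and your stated reason for doing so in the finite case is wrong: the product $s=\prod_{\alpha}s_{\alpha}$ of individual $S$-elements works \emph{directly}, with no appeal to the $\sqrt{0}$-absorption device of Theorem \ref{char2}. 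Indeed, given $ab\in\bigcap I_{\alpha}$, either $s_{\alpha}a\in\sqrt{0}$ for some $\alpha$, in which case $sa=\bigl(\prod_{\beta\neq\alpha}s_{\beta}\bigr)(s_{\alpha}a)\in\sqrt{0}$ because $\sqrt{0}$ is an ideal, or else $s_{\alpha}b\in I_{\alpha}$ for every $\alpha$, in which case $sb=\bigl(\prod_{\beta\neq\alpha}s_{\beta}\bigr)(s_{\alpha}b)\in I_{\alpha}$ for every $\alpha$, i.e.\ $sb\in\bigcap I_{\alpha}$. This is exactly the paper's proof of (1). Your caveat about infinite families is nonetheless a fair criticism of the paper: the element $\prod_{\alpha\in\Lambda}s_{\alpha}$ is undefined when $\Lambda$ is infinite, so the paper's argument really only covers the case where finitely many distinct $s_{\alpha}$ occur, and your common-$S$-element hypothesis is one honest way to repair it.

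You also miss part (2) entirely (your chain-union statement, while correctly proved, is not what the proposition asserts). The idea for (2) is to pick $s'\in\bigl(\bigcap_{\alpha\in\Omega}I_{\alpha}\bigr)\cap S$ and use $s=s'\prod_{\alpha\in\Lambda-\Omega}s_{\alpha}$: for $\alpha\in\Lambda-\Omega$ the argument above applies, while for $\alpha\in\Omega$ one gets $sb\in I_{\alpha}$ for free simply because $s'$ itself lies in $I_{\alpha}$. No $S$-$n$-hypothesis is needed on the ideals indexed by $\Omega$.
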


\begin{enumerate}
\item If $I_{\alpha}$ is an $S$-$n$-ideal of $R$ for all $\alpha\in\Lambda$,
then $\bigcap\limits_{\alpha\in\Lambda}I_{\alpha}$ and is an $S$-$n$-ideal of
$R$.

\item If $\left(  \bigcap\limits_{\alpha\in\Omega}I_{\alpha}\right)  \cap
S\neq\emptyset$ for $\Omega\subseteq\Lambda$ and $I_{\alpha}$ is an $S$%
-$n$-ideal of $R$ for all $\alpha\in\Lambda-\Omega$, then $\bigcap
\limits_{\alpha\in\Lambda}I_{\alpha}$ is an $S$-$n$-ideal of $R$.
\end{enumerate}

\begin{proof}
(1) Suppose that for all $\alpha\in\Lambda$, $I_{\alpha}$ is an $S$-$n$-ideal
of $R$ and note that $\left(  \bigcap\limits_{\alpha\in\Lambda}I_{\alpha
}\right)  \cap S=\phi$. For all $\alpha\in\Lambda$, choose $s_{\alpha}\in S$
such that whenever $a,b\in R$ such that $ab\in I_{\alpha}$, then $s_{\alpha
}a\in\sqrt{0}$ or $s_{\alpha}b\in I_{\alpha}$. Let $a,b\in R$ such that
$ab\in\bigcap\limits_{\alpha\in\Lambda}I_{\alpha}$. Then $ab\in I_{\alpha}$
for all $\alpha\in\Lambda$. If we let $s=\prod\limits_{\alpha\in\Lambda
}s_{\alpha}\in S$, then clearly $sa\in\sqrt{0}$ or $sb\in\bigcap
\limits_{\alpha\in\Lambda}I_{\alpha}$ and the result follows.

(2) Choose $s^{\prime}\in\left(  \bigcap\limits_{\alpha\in\Omega}I_{\alpha
}\right)  \cap S$. Let $a,b\in R$ with $ab\in\bigcap\limits_{\alpha\in\Lambda
}I_{\alpha}$. Then for all $\alpha\in\Lambda-\Omega$, $ab\in I_{\alpha}$ and
so $s_{\alpha}a\in\sqrt{0}$ or $s_{\alpha}b\in I_{\alpha}$ for some
$S$-element $s_{\alpha}$ of $I_{\alpha}$. Hence, $(s^{\prime}\prod
\limits_{\alpha\in\Lambda-\Omega}s_{\alpha})a\in\sqrt{0}$ or $(s^{\prime}%
\prod\limits_{\alpha\in\Lambda-\Omega}s_{\alpha})b\in\bigcap\limits_{\alpha
\in\Lambda}I_{\alpha}$ and so $\bigcap\limits_{\alpha\in\Lambda}I_{\alpha}$ is
an $S$-$n$-ideal of $R$.
\end{proof}

Let $S$ and $T$ be two multiplicatively closed subsets of a ring $R$ with
$S\subseteq T$. Let $I$ be an ideal disjoint with $T.$ It is clear that if $I$
is a $S$-$n$-ideal, then it is $T$-$n$-ideal. The converse is not true since
while $I=<\overline{4}>$ is an $S$-$n$-ideal of $%
\mathbb{Z}
_{12}$ for $S=\{\overline{1},\overline{3},\overline{9}\}$, it is not a $T$%
-$n$-ideal for $T=\{\overline{1}\}\subseteq S$.

\begin{proposition}
\label{subset}Let $S$ and $T$ be two multiplicatively closed subsets of a ring
$R$ with $S\subseteq T$ such that for each $t\in T$, there is an element
$t^{\prime}\in T$ such that $tt^{\prime}\in S$. If $I$ is a $T$-$n$-ideal of
$R$, then $I$ is an $S$-$n$-ideal of $R$.
\end{proposition}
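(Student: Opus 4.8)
The plan is to take a $T$-element of $I$ and manufacture from it an $S$-element. First I would observe that the statement is even well-posed: since $I$ is a $T$-$n$-ideal, by definition $I\cap T=\emptyset$, and as $S\subseteq T$ this gives $I\cap S=\emptyset$, so $I$ is a legitimate candidate for being an $S$-$n$-ideal.

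Next, fix a $T$-element $t\in T$ of $I$, so that for all $a,b\in R$, $ab\in I$ and $ta\notin\sqrt{0}$ imply $tb\in I$. By the hypothesis on $S$ and $T$, there is $t^{\prime}\in T$ with $s:=tt^{\prime}\in S$, and I claim this $s$ is an $S$-element of $I$. To check it, let $a,b\in R$ with $ab\in I$ and $sa\notin\sqrt{0}$. The key step is the implication $sa\notin\sqrt{0}\Rightarrow ta\notin\sqrt{0}$: since $\sqrt{0}$ is an ideal, if $ta\in\sqrt{0}$ then $sa=t^{\prime}(ta)\in\sqrt{0}$, a contradiction. Now apply the defining property of the $T$-element $t$ to $ab\in I$ with $ta\notin\sqrt{0}$ to get $tb\in I$; multiplying by $t^{\prime}$ and using that $I$ is an ideal yields $sb=t^{\prime}(tb)\in I$, as needed. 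Hence $s$ is an $S$-element of $I$ and $I$ is an $S$-$n$-ideal.

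I do not expect a real obstacle here; the argument is a short direct manipulation relying only on $\sqrt{0}$ and $I$ being ideals, together with the observation that the constructed element $s=tt^{\prime}$ still ``dominates'' $t$ in the precise sense required. The single point to handle with care is the \emph{direction} of the implication relating $sa\notin\sqrt{0}$ and $ta\notin\sqrt{0}$ (we need the former to imply the latter, not the reverse), since that is exactly what allows the $T$-$n$-ideal hypothesis to be invoked.
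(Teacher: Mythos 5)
Your proposal is correct and is essentially the paper's own argument: both fix a $T$-element $t$, use the hypothesis to produce $t'\in T$ with $s=tt'\in S$, and then pass from ``$ta\in\sqrt{0}$ or $tb\in I$'' to ``$sa\in\sqrt{0}$ or $sb\in I$'' by multiplying by $t'$ and using that $\sqrt{0}$ and $I$ are ideals. You merely phrase the disjunction contrapositively and add the (correct, and worth noting) check that $I\cap S=\emptyset$.
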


\begin{proof}
Suppose $ab\in I$. Then there is a $T$-element $t\in T$ of $I$ satisfying
$ta\in\sqrt{0}$ or $tb\in I.$ Hence there exists some $t^{\prime}\in T$ with
$s=tt^{\prime}\in S$, and thus $sa\in\sqrt{0}$ or $sb\in I.$
\end{proof}

Let $S$ be a multiplicatively closed subset of a ring $R$. The saturation of
$S$ is the set $S^{\ast}=\{r\in R:\frac{r}{1}$ is a unit in $S^{-1}R\}$. It is
clear that $S^{\ast}$ is a multiplicatively closed subset of $R$ and that
$S\subseteq S^{\ast}$. Moreover, it is well known that $S^{\ast}=\{x\in
R:xy\in S$ for some $y\in R$\}, see \cite{Gilmer}. The set $S$ is called
saturated if $S^{\ast}=S$.

\begin{proposition}
Let $S$ be a multiplicatively closed subset of a ring $R$ and $I$ be an ideal
of $R$ disjoint with $S$. Then $I$ is an $S$-$n$-ideal of $R$ if and only if
$I$ is an $S^{\ast}$-$n$-ideal of $R$.
\end{proposition}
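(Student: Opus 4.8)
The statement asserts that an ideal $I$ disjoint with $S$ is an $S$-$n$-ideal if and only if it is an $S^{\ast}$-$n$-ideal, where $S^{\ast}=\{x\in R:xy\in S\text{ for some }y\in R\}$. First observe that $I$ is indeed disjoint from $S^{\ast}$: if $x\in I\cap S^{\ast}$, then $xy\in S$ for some $y\in R$, and since $I$ is an ideal we get $xy\in I\cap S$, a contradiction. So both notions make sense.

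The plan is to prove the two implications separately. For the easy direction, assume $I$ is an $S^{\ast}$-$n$-ideal. Since $S\subseteq S^{\ast}$, this looks like it should follow from a monotonicity remark, but the subtlety is that an $S^{\ast}$-element of $I$ need not lie in $S$. Here is where I would invoke Proposition \ref{subset}: I claim the pair $S\subseteq S^{\ast}$ satisfies its hypothesis, namely that for each $t\in S^{\ast}$ there is $t'\in S^{\ast}$ with $tt'\in S$. Indeed, by definition of $S^{\ast}$ there is $y\in R$ with $ty\in S$; take $t'=y$ (and note $y\in S^{\ast}$ since $ty\in S\subseteq S^{\ast}$, or more directly $yt\in S$ witnesses $y\in S^{\ast}$). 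Then $tt'=ty\in S$ as required, and Proposition \ref{subset} gives that $I$ is an $S$-$n$-ideal.

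For the converse, assume $I$ is an $S$-$n$-ideal with $S$-element $s\in S$. I want to show $s$ (viewed in $S^{\ast}$) is an $S^{\ast}$-element of $I$. Let $a,b\in R$ with $ab\in I$ and $sa\notin\sqrt{0}$; since $s$ is an $S$-element of $I$ this forces $sb\in I$, which is exactly the defining condition for $I$ to be an $S^{\ast}$-$n$-ideal with $S$-element $s\in S\subseteq S^{\ast}$. So this direction is essentially immediate from $S\subseteq S^{\ast}$ — no extra work is needed beyond the disjointness check already made.

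The only genuine obstacle is the easy-looking first direction: one must resist the temptation to think "larger multiplicatively closed set $\Rightarrow$ weaker condition" and instead carefully verify the Proposition \ref{subset} hypothesis, the point being that $S^{\ast}$ is precisely the saturation, so every element of $S^{\ast}$ divides into $S$. Once that observation is in hand the whole proof is a two-line citation of Proposition \ref{subset} together with the trivial $S\subseteq S^{\ast}$ inclusion for the other direction.
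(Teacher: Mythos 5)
Your proof is correct and follows essentially the same route as the paper: the nontrivial direction is reduced to Proposition \ref{subset} by checking that every $t\in S^{\ast}$ admits $t'\in S^{\ast}$ with $tt'\in S$, and the converse is immediate from $S\subseteq S^{\ast}$. Your explicit verification that $I\cap S^{\ast}=\emptyset$ is a small but worthwhile addition that the paper leaves implicit.
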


\begin{proof}
Suppose $I$ is an $S^{\ast}$-$n$-ideal of $R$. It is enough by Proposition
\ref{subset} to prove that for each $t\in S^{\ast}$, there is an element
$t^{\prime}\in S^{\ast}$ such that $tt^{\prime}\in S$. Let $t\in S^{\ast}$ and
choose $t^{\prime}\in R$ such that $ty\in S$. Then $t^{\prime}\in S^{\ast}$
and $tt^{\prime}\in S$ as required. The converse is obvious.
\end{proof}

Let $S$ and $T$ be multiplicatively closed subsets of a ring $R$ with
$S\subseteq T$. Then clearly, $T^{-1}S=\left\{  \frac{s}{t}:t\in T\text{,
}s\in S\right\}  $ is a multiplicatively closed subset of $T^{-1}R$.

\begin{proposition}
\label{Inverse g}Let $S$, $T$ be multiplicatively closed subsets of a ring $R$
with $S\subseteq T$ and $I$ be an ideal of $R$ disjoint with $T$. If $I$ is an
$S$-$n$-ideal of $R$, then $T^{-1}I$ is an $T^{-1}S$-$n$-ideal of $T^{-1}R$.
Moreover, we have $T^{-1}I\cap R=(I:u)$ for some $S$-element $u$ of $I$.
\end{proposition}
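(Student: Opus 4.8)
The plan is to verify the $S$-$n$-ideal property for $T^{-1}I$ directly from the definition, then compute the contraction. Let $s\in S$ be an $S$-element of $I$; I claim $\frac{s}{1}\in T^{-1}S$ works as an $S$-element for $T^{-1}I$. First I would check that $T^{-1}I$ is disjoint from $T^{-1}S$: if $\frac{a}{t}=\frac{s'}{t'}$ with $a\in I$, $s'\in S\subseteq T$, then $t''(at'-s't)=0$ for some $t''\in T$, so $t''t'a=t''ts'\in S\cap\,$(something); more carefully, $s't''t=at'' t'\in I$, but $s't''t\in T$ contradicts $I\cap T=\emptyset$. Next, suppose $\frac{a}{t_1}\cdot\frac{b}{t_2}\in T^{-1}I$ for $a,b\in R$, $t_1,t_2\in T$. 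Then there is $u\in T$ with $uab\in I$. Applying the $S$-$n$-ideal property of $I$ to $(ua)b\in I$: either $s(ua)\in\sqrt{0}$ or $sb\in I$. In the first case $\frac{s}{1}\cdot\frac{a}{t_1}=\frac{sa}{t_1}$ and $(sa)^n$ is killed by $u^n$ up to a nilpotent, so $\frac{sua}{1}\in\sqrt{0_{T^{-1}R}}$; since $\frac{u}{1}$ is a unit in $T^{-1}R$, in fact $\frac{sa}{t_1}\in\sqrt{0_{T^{-1}R}}$. In the second case $\frac{s}{1}\cdot\frac{b}{t_2}=\frac{sb}{t_2}\in T^{-1}I$. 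Either way the condition holds, so $T^{-1}I$ is a $T^{-1}S$-$n$-ideal.

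For the contraction, recall $T^{-1}I\cap R=\{r\in R:\frac{r}{1}\in T^{-1}I\}=\{r\in R: tr\in I\text{ for some }t\in T\}$, which is the union $\bigcup_{t\in T}(I:_R t)$. I would show this union equals $(I:_R u)$ for a single $S$-element $u$ of $I$ using the ideal-theoretic strength of the $S$-$n$-ideal hypothesis: if $tr\in I$ with $t\in T$, then since $t\cdot r\in I$ and... here one wants to "absorb" $t$ into a fixed element. The cleanest route: fix an $S$-element $s$ of $I$; for $r$ with $tr\in I$, apply the $S$-$n$-ideal property to get $st\in\sqrt{0}$ or $sr\in I$. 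But $st\in T$ (as $s\in S\subseteq T$), and $T\cap I=\emptyset$ does not immediately forbid $st\in\sqrt{0}$, so this needs care. The better approach is to note that $(I:_R t)$ is an increasing-type family: I would argue $T^{-1}I\cap R=(I:_R t_0)$ where $t_0$ is chosen so that $(I:_R t_0)$ is maximal among the $(I:_R t)$, $t\in T$ — but without a Noetherian hypothesis this maximal element need not exist in general, so the intended $u$ must come from the $S$-$n$-ideal structure itself.

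The main obstacle, as I see it, is precisely pinning down the single element $u$ in the claim $T^{-1}I\cap R=(I:_R u)$. I expect the authors exploit that $I$ is an $S$-$n$-ideal with $S$-element $s$: given $r\in T^{-1}I\cap R$, there is $t\in T$ with $tr\in I$, and the trick is that one can replace $t$ by something controlled. One natural move: since $I$ is an $S$-$n$-ideal, $sI\subseteq\sqrt{0}$ by Proposition \ref{p1}(1); combined with $\sqrt{0}$ being well-behaved, one may be able to show $s^2 r\in I$ whenever $tr\in I$ for some $t\in T$, giving $T^{-1}I\cap R\subseteq(I:_R s^2)$, and the reverse inclusion $(I:_R s^2)\subseteq T^{-1}I\cap R$ is immediate since $s^2\in S\subseteq T$. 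Thus I would take $u=s^2$ (or $u=s$ after checking $s\cdot s\in S$), verify $u$ is again an $S$-element of $I$ (which follows from the definition since $s$ is and $S$-elements can be multiplied by elements of $S$), and conclude. I would flag this absorption step — showing $tr\in I,\ t\in T\implies s^{2}r\in I$ — as the one requiring the real argument, invoking the $S$-$n$-ideal property of $I$ together with the fact that if $sr\notin I$ then $st\in\sqrt{0}$ forces, via $S\subseteq reg$ or via a nilpotency bookkeeping on powers of $s$ and $t$, an outcome consistent with $r\in(I:_R s^2)$.
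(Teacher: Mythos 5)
Your argument for the first assertion is correct and is essentially the paper's: take $\frac{s}{1}$ for an $S$-element $s$ of $I$, clear denominators to get $uab\in I$ for some $u\in T$, apply the $S$-$n$-ideal property to the product $(ua)\cdot b\in I$, and use that $\frac{u}{1}$ is a unit in $T^{-1}R$ to transfer nilpotency of $\frac{sua}{1}$ to $\frac{sa}{t_{1}}$. The disjointness check $T^{-1}S\cap T^{-1}I=\emptyset$ is also fine.

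The contraction statement is where you have a genuine gap, and in fact you walk right up to the correct argument and then abandon it. For $r\in T^{-1}I\cap R$ you choose $t\in T$ with $tr\in I$ and apply the $S$-$n$-ideal property to $t\cdot r\in I$, obtaining $st\in\sqrt{0}$ or $sr\in I$; you then say that ``$T\cap I=\emptyset$ does not immediately forbid $st\in\sqrt{0}$.'' But it does, in one line: $st\in T$ (since $s\in S\subseteq T$ and $T$ is multiplicatively closed), and $T\cap\sqrt{0}=\emptyset$, because if some $w\in T$ satisfied $w^{n}=0$ then $w^{n}\in T\cap I$ (as $0\in I$ and $w^{n}\in T$), contradicting the hypothesis $I\cap T=\emptyset$. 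Hence $st\notin\sqrt{0}$, so $sr\in I$, i.e. $T^{-1}I\cap R\subseteq(I:_{R}s)$; the reverse inclusion is immediate because $s\in T$. Thus $u=s$, the $S$-element itself, works, and this is exactly what the paper does. There is no need for $u=s^{2}$, for a maximal element among the $(I:_{R}t)$, for a Noetherian hypothesis, for $S\subseteq reg(R)$, or for the ``nilpotency bookkeeping'' you leave unresolved; as written, your proof of the second assertion is incomplete.
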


\begin{proof}
Suppose $I$ is an $S$-$n$-ideal. Suppose $T^{-1}S\cap T^{-1}I\neq\phi$, say,
$\frac{a}{t}\in T^{-1}S\cap T^{-1}I$. Then $a\in S$ and $ta\in I$ for some
$t\in T$. Since $S\subseteq T$, then $ta\in T\cap I$, a contradiction. Thus,
$T^{-1}I$ is proper in $T^{-1}R$ and $T^{-1}S\cap T^{-1}I=\phi$. Let $s\in S$
be an $S$-element of $I$ and choose $\frac{s}{1}\in T^{-1}S$. Suppose $a,b\in
R$ and $t_{1},t_{2}\in T$ with $\frac{a}{t_{1}}\frac{b}{t_{2}}\in T^{-1}I$ and
$\frac{s}{1}\frac{a}{t_{1}}\notin\sqrt{0_{T^{-1}R}}$. Then $tab\in I$ for some
$t\in T$ and $sa\notin\sqrt{0}$. Since $I$ is an $S$-$n$-ideal, we must have
$stb\in I$. Thus, $\frac{s}{1}\frac{b}{t_{2}}=\frac{stb}{tt_{2}}\in T^{-1}I$
as needed. Now, let $r\in T^{-1}I\cap R$ and choose $i\in I$, $t\in T$ such
that $\frac{r}{1}=\frac{i}{t}$. Then $vr\in I$ for some $v\in T$. Since $I$ is
an $S$-$n$-ideal, then there exists $u\in S\subseteq T$ such that $uv\in
\sqrt{0}$ or $ur\in I$. But $uv\notin\sqrt{0}$ as $T\cap\sqrt{0}=\phi$ and so
$ur\in I$. It follows that $r\in(I:u)$ for some $S$-element $u$ of $I$. Since
clearly $(I:u)\subseteq T^{-1}I\cap R$ for all $u\in T$, the proof is completed.
\end{proof}

In particular, if $S=T$, then all elements of $T^{-1}S$ are units in $T^{-1}%
R$. As a special case of of Proposition \ref{Inverse g}, we have the following.

\begin{corollary}
\label{Inverse}Let $S$ be a multiplicatively closed subset of a ring $R$ and
$I$ be an ideal of $R$ disjoint with $S$. If $I$ is an $S$-$n$-ideal of $R$,
then $S^{-1}I$ is an $n$-ideal of $S^{-1}R$. Moreover, we have $S^{-1}I\cap
R=(I:s)$ for some $S$-element $s$ of $I$.
\end{corollary}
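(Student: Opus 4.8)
The plan is to deduce the corollary directly from Proposition \ref{Inverse g} by specializing $T=S$. Taking $T=S$ there, the proposition already gives that $S^{-1}I$ is an $S^{-1}S$-$n$-ideal of $S^{-1}R$ and that $S^{-1}I\cap R=(I:u)$ for some $S$-element $u$ of $I$. The second statement is precisely the ``moreover'' part of the corollary (with $s$ in place of $u$), so no extra work is needed for it, and the whole task reduces to upgrading ``$S^{-1}S$-$n$-ideal'' to ``$n$-ideal''.

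For that I would observe that $S^{-1}S=\left\{\frac{s}{t}:s,t\in S\right\}$ consists entirely of units of $S^{-1}R$: for $s,t\in S$ the element $\frac{t}{s}$ lies in $S^{-1}R$ and $\frac{s}{t}\cdot\frac{t}{s}=\frac{1}{1}$, so $\frac{s}{t}\in U(S^{-1}R)$. Hence $S^{-1}S\subseteq U(S^{-1}R)$, and by the remark recorded just after the Definition (the classes of $n$-ideals and of $S$-$n$-ideals coincide once the multiplicatively closed set is contained in the units), the $S^{-1}S$-$n$-ideal $S^{-1}I$ is in fact an $n$-ideal of $S^{-1}R$.

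If one prefers a self-contained verification in place of quoting that remark, I would unwind the definitions instead: $S^{-1}I$ is proper because $I\cap S=\emptyset$; and if $\frac{a}{t_1}\frac{b}{t_2}\in S^{-1}I$ with $\frac{a}{t_1}\notin\sqrt{0_{S^{-1}R}}$, then, fixing an $S$-element $s$ of $I$ and using that $S^{-1}I$ is an $S^{-1}S$-$n$-ideal with $\frac{s}{1}\in S^{-1}S$, we get $\frac{s}{1}\frac{a}{t_1}\in\sqrt{0_{S^{-1}R}}$ or $\frac{s}{1}\frac{b}{t_2}\in S^{-1}I$; since $\frac{s}{1}$ is a unit and $\sqrt{0_{S^{-1}R}}=S^{-1}\sqrt{0}$, the first alternative would force $\frac{a}{t_1}\in\sqrt{0_{S^{-1}R}}$, a contradiction, so $\frac{b}{t_2}\in S^{-1}I$.

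I do not anticipate a real obstacle here, since this is a straightforward specialization; the only point needing a moment's care is the standard identification $\sqrt{0_{S^{-1}R}}=S^{-1}\sqrt{0}$ and checking that the ``units $\Rightarrow$ $n$-ideal'' principle is invoked in the right direction.
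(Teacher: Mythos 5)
Your proposal is correct and follows essentially the same route as the paper: both apply Proposition \ref{Inverse g} with $T=S$ and then use the fact that every element of $S^{-1}S$ is a unit of $S^{-1}R$ to upgrade the $S^{-1}S$-$n$-ideal conclusion to an $n$-ideal (the paper unwinds the definition exactly as in your self-contained variant). The ``moreover'' part is handled identically, by reading it off from the proposition.
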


\begin{proof}
Suppose $I$ is an $S$-$n$-ideal. Then $S^{-1}I$ is an $S^{-1}S$-$n$-ideal of
$S^{-1}R$ by Proposition \ref{Inverse g}. Let $a,b\in R,$ $s_{1},s_{2}\in S$
with $\frac{a}{s_{1}}\frac{b}{s_{2}}\in S^{-1}I$. Then by assumption,
$\frac{s}{t}\frac{a}{s_{1}}\in\sqrt{0_{S^{-1}R}}$ or $\frac{s}{t}\frac
{b}{s_{2}}\in S^{-1}I$ for some $S^{-1}S$-element $\frac{s}{t}$ of $S^{-1}I$.
Since $\frac{s}{t}$ is a unit in $S^{-1}R$, then $S^{-1}I$ is an $n$-ideal of
$S^{-1}R$ as required. The other part follows directly by Proposition
\ref{Inverse g}.
\end{proof}

\begin{corollary}
Let $S$ be a multiplicatively closed subset of a ring $R$ and $I$ be an ideal
of $R$ disjoint with $S$. Then $I$ is an $S$-$n$-ideal of $R$ if and only if
$S^{-1}I$ is an $n$-ideal of $S^{-1}R$, $S^{-1}I\cap R=(I:s)$ and $S^{-1}%
\sqrt{0}\cap R=(\sqrt{0}:t)$ for some $s,t\in S$.
\end{corollary}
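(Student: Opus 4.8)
The plan is to prove the three-way equivalence by combining Corollary~\ref{Inverse} with the extra information about $\sqrt{0}$ supplied by the new hypothesis. The forward direction is already essentially in hand: if $I$ is an $S$-$n$-ideal, then by Corollary~\ref{Inverse} both $S^{-1}I$ is an $n$-ideal of $S^{-1}R$ and $S^{-1}I\cap R=(I:s)$ for some $S$-element $s$ of $I$; and since $I$ being an $S$-$n$-ideal forces (via Proposition~\ref{p1}(1)) $sI\subseteq\sqrt{0}$, and in particular gives a handle on $\sqrt{0}$, one checks directly that $S^{-1}\sqrt{0}\cap R=(\sqrt{0}:t)$ for a suitable $t\in S$. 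Concretely, for the last equality: the inclusion $(\sqrt{0}:t)\subseteq S^{-1}\sqrt{0}\cap R$ is automatic for any $t\in S$, while if $r\in S^{-1}\sqrt{0}\cap R$ then $\frac{r}{1}=\frac{x}{t'}$ in $S^{-1}R$ with $x\in\sqrt{0}$, $t'\in S$, so $t''t'r=t''x\in\sqrt{0}$ for some $t''\in S$; setting $t=t''t'$ gives $tr\in\sqrt{0}$, i.e. $r\in(\sqrt{0}:t)$. So the forward implication holds with essentially no new work.

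For the converse, I would argue as follows. Suppose $S^{-1}I$ is an $n$-ideal of $S^{-1}R$, that $S^{-1}I\cap R=(I:s)$ and that $S^{-1}\sqrt{0}\cap R=(\sqrt{0}:t)$ for some $s,t\in S$. First note that $\sqrt{0_{S^{-1}R}}=S^{-1}\sqrt{0}$, so the contraction identity for $\sqrt{0}$ says $(\sqrt{0}:t)$ is the contraction of the nilradical. Now take $a,b\in R$ with $ab\in I$ and aim to produce an $S$-element of $I$. Put $u=st$ (or $s$ and $t$ themselves, combined as needed). Passing to $S^{-1}R$, we have $\frac{a}{1}\frac{b}{1}\in S^{-1}I$, so since $S^{-1}I$ is an $n$-ideal either $\frac{a}{1}\in\sqrt{0_{S^{-1}R}}=S^{-1}\sqrt{0}$ or $\frac{b}{1}\in S^{-1}I$. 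In the first case $a\in S^{-1}\sqrt{0}\cap R=(\sqrt{0}:t)$, hence $ta\in\sqrt{0}$, hence $ua\in\sqrt{0}$. In the second case $b\in S^{-1}I\cap R=(I:s)$, hence $sb\in I$, hence $ub\in I$. Thus $u$ is an $S$-element witnessing that $I$ is an $S$-$n$-ideal (one must also record that $I\cap S=\emptyset$ is part of the standing hypothesis, so $I$ is a legitimate candidate).

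The only subtlety — and the step I expect to need the most care — is bookkeeping the multiple $S$-elements: $s$ comes from the contraction of $S^{-1}I$, $t$ from the contraction of $S^{-1}\sqrt{0}$, and a further element may arise from the $n$-ideal property of $S^{-1}I$ when clearing denominators; all of these must be multiplied into a single $u\in S$ that works uniformly for all $a,b$. Since $S$ is multiplicatively closed this is routine, but the argument should be written so that the choice of $u$ is made once, before introducing $a,b$, exactly as in the proofs of Proposition~\ref{Inverse g} and Corollary~\ref{Inverse}. With that care the equivalence follows, and no genuinely new obstacle appears beyond what was already handled in those two results.
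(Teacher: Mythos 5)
Your converse direction is correct and is essentially the paper's own argument: pass to $S^{-1}R$, use the $n$-ideal property of $S^{-1}I$, and pull everything back through the two contraction identities with the single element $u=st$. The forward direction, however, has a genuine gap at the third equality. You fix $r\in S^{-1}\sqrt{0}\cap R$, produce $t',t''\in S$ with $t''t'r\in\sqrt{0}$, and then ``set $t=t''t'$''; but this $t$ depends on $r$, whereas the statement asserts $S^{-1}\sqrt{0}\cap R=(\sqrt{0}:t)$ for one fixed $t\in S$ working for all $r$ simultaneously. What your computation actually shows is only $S^{-1}\sqrt{0}\cap R=\bigcup_{t\in S}(\sqrt{0}:t)$, and for a general multiplicatively closed set this union need not coincide with any single $(\sqrt{0}:t)$. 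This uniformity issue is exactly why Proposition \ref{Inverse g} invokes the $S$-$n$-ideal hypothesis, rather than a bare denominator-clearing computation, to identify $T^{-1}I\cap R$ with a single $(I:u)$.

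The repair uses the hypothesis you did not bring into this step. Let $s$ be an $S$-element of $I$, and note $S\cap\sqrt{0}=\emptyset$ (if $x\in S\cap\sqrt{0}$ then $x^{n}=0\in I\cap S$). Given $r\in S^{-1}\sqrt{0}\cap R$, there is $u\in S$ with $ur\in\sqrt{0}$, so $u^{m}r^{m}=0\in I$ for some $m$. Since $su^{m}\in S$ is not nilpotent, the $S$-$n$-ideal property gives $sr^{m}\in I$; combining with $sI\subseteq\sqrt{0}$ (Proposition \ref{p1}(1), with this same $s$) yields $s^{2}r^{m}\in\sqrt{0}$ and hence $s^{2}r\in\sqrt{0}$. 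Thus the single element $t=s^{2}$ works uniformly, i.e. $S^{-1}\sqrt{0}\cap R=(\sqrt{0}:s^{2})$; this is the ``similar approach to Proposition \ref{Inverse g}'' that the paper's proof alludes to. With this correction the forward direction is complete, and the rest of your write-up needs no change.
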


\begin{proof}
$\Rightarrow)$ Suppose $I$ is an $S$-$n$-ideal of $R$. Then $S^{-1}I$ is an
$n$-ideal of $S^{-1}R$ by Corollary \ref{Inverse}. The other part of the
implication follows by using a similar approach to that used in the proof of
Proposition \ref{Inverse g}.

$\Leftarrow)$ Suppose $S^{-1}I$ is an $n$-ideal of $S^{-1}R$, $S^{-1}I\cap
R=(I:s)$ and $S^{-1}\sqrt{0}\cap R=(\sqrt{0}:t)$ for some $s,t\in S$. Choose
$u=st\in S$ and let $a,b\in R$ such that $ab\in I$. Then $\frac{a}{1}\frac
{b}{1}\in S^{-1}I$ and so $\frac{a}{1}\in\sqrt{S^{-1}0}=S^{-1}\sqrt{0}$ or
$\frac{b}{1}\in S^{-1}I$ . If $\frac{a}{1}\in\sqrt{S^{-1}0}$, then there is
$w\in S$ such that $wa\in\sqrt{0}$. Thus, $a=\frac{wa}{w}\in S^{-1}\sqrt
{0}\cap R=(\sqrt{0}:t)$. Hence, $ta\in\sqrt{0}$ and so $ua=sta\in\sqrt{0}$. If
$\frac{b}{1}\in{S^{-1}I}$, then there is $v\in S$ such that $vb\in I$ and so
$b=\frac{vb}{v}\in S^{-1}I\cap R=(I:s)$. Therefore, $ub=tsb\in I$ and $I$ is
an $S$-$n$-ideal of $R$.
\end{proof}

\begin{proposition}
\label{f}Let $f:R_{1}\rightarrow R_{2}$ be a ring homomorphism and $S$ be a
multiplicatively closed subset of $R_{1}$. Then the following statements hold.
\end{proposition}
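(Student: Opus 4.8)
The plan is to treat the two natural directions separately: pulling an $f(S)$-$n$-ideal of $R_{2}$ back along an injective $f$, and pushing an $S$-$n$-ideal of $R_{1}$ forward along a surjective $f$ (the latter needing $\ker f$ to lie inside the ideal). Throughout I will use only the elementary facts that $f(S)=\{f(s):s\in S\}$ is a multiplicatively closed subset of $R_{2}$, that $f$ preserves sums and products, and that it carries nilpotents to nilpotents, so $f(\sqrt{0_{R_{1}}})\subseteq\sqrt{0_{R_{2}}}$.

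For the pullback part, suppose $J$ is an $f(S)$-$n$-ideal of $R_{2}$ and $f(s)$ is an $f(S)$-element of $J$, where $s\in S$; I claim $s$ is an $S$-element of $f^{-1}(J)$. First I would check disjointness: if $s'\in S\cap f^{-1}(J)$ then $f(s')\in f(S)\cap J$, contradicting that $J$ is disjoint from $f(S)$. Next, given $a,b\in R_{1}$ with $ab\in f^{-1}(J)$, we have $f(a)f(b)=f(ab)\in J$, so $f(sa)=f(s)f(a)\in\sqrt{0_{R_{2}}}$ or $f(sb)=f(s)f(b)\in J$. In the second case $sb\in f^{-1}(J)$ immediately; the first case is the only delicate point, and it is exactly where injectivity enters: from $f\bigl((sa)^{k}\bigr)=0$ for some $k$ and $f$ a monomorphism we get $(sa)^{k}=0$, i.e. $sa\in\sqrt{0_{R_{1}}}$.

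For the pushforward part, assume $f$ is surjective, $I$ is an $S$-$n$-ideal of $R_{1}$ with $S$-element $s$, and $\ker f\subseteq I$; I claim $f(s)$ is an $f(S)$-element of $f(I)$. Disjointness of $f(I)$ and $f(S)$ holds because $f(s')=f(i)$ with $i\in I$ forces $s'-i\in\ker f\subseteq I$, hence $s'\in I\cap S$, a contradiction. Now take $x,y\in R_{2}$ with $xy\in f(I)$; by surjectivity write $x=f(a)$, $y=f(b)$, so $f(ab)\in f(I)$, and $ab-i\in\ker f\subseteq I$ for some $i\in I$ gives $ab\in I$. The $S$-$n$-ideal property of $I$ then yields $sa\in\sqrt{0_{R_{1}}}$ or $sb\in I$, and applying $f$ gives $f(s)x=f(sa)\in\sqrt{0_{R_{2}}}$ or $f(s)y=f(sb)\in f(I)$, as needed.

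The substance of the argument is minimal — it is essentially a diagram chase — so the only things to be careful about are the two hypotheses that make it work: injectivity in the first part (to descend nilpotency from $R_{2}$ back to $R_{1}$) and surjectivity together with $\ker f\subseteq I$ in the second (to realize elements of $R_{2}$ as images and to lift membership in $f(I)$ to membership in $I$). I would flag explicitly in the statement that these hypotheses are indispensable, since dropping either one breaks the corresponding implication; the rest is routine.
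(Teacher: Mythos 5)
Your argument is essentially the paper's: for the pushforward you assume $f$ surjective and $\ker f\subseteq I$, lift $xy\in f(I)$ to $ab\in I$, apply the $S$-$n$-ideal property and push back down; this is word-for-word part (1) of the proposition, including the disjointness check $f(I)\cap f(S)=\emptyset$. The one place you diverge is the pullback direction, where you require $f$ to be injective in order to descend nilpotency from $R_{2}$ to $R_{1}$. The paper gets by with the strictly weaker hypothesis $\ker f\subseteq\sqrt{0_{R_{1}}}$: in your ``delicate point,'' from $f\bigl((sa)^{k}\bigr)=0$ you do not need $(sa)^{k}=0$, only $(sa)^{k}\in\ker f\subseteq\sqrt{0_{R_{1}}}$, which already gives $sa\in\sqrt{0_{R_{1}}}$. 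So your final remark that injectivity is indispensable is not right; the correct indispensable condition is that the kernel consist of nilpotents, and that extra generality matters in this paper (e.g.\ it is what makes Corollary \ref{quot}(1), the converse for $R/I$ with $I\subseteq\sqrt{0}$, fall out of this proposition). Everything else in your write-up is correct and matches the paper's proof.
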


\begin{enumerate}
\item If $f$ is an epimorphism and $I$ is an $S$-$n$-ideal of $R_{1}$
containing $Ker(f)$, then $f(I)$ is an $f(S)$-$n$-ideal of $R_{2}.$

\item If $Ker(f)\subseteq\sqrt{0_{R_{1}}}$ and $J$ is an $f(S)$-$n$-ideal of
$R_{2}$, then $f^{-1}(J)$ is an $S$-$n$-ideal of $R_{1}.$
\end{enumerate}

\begin{proof}
First we show that $f(I)\cap f(S)=\emptyset$. Otherwise, there is $t\in
f(I)\cap f(S)$ which implies $t=f(x)=f(s)$ for some $x\in I$ and $s\in S$.
Hence, $x-s\in Ker(f)\subseteq I$ and $s\in I$, a contradiction.

(1) Let $a,b\in R_{2}$ and $ab\in f(I)$. Since $f$ is onto, $a=f(x)$ and
$b=f(y)$ for some $x,y\in R_{1}$. Since $f(x)f(y)\in f(I)$ and
$Ker(f)\subseteq I$, we have $xy\in I$ and so there exists an $s\in S$ such
that $sx\in\sqrt{0_{R_{1}}}$ or $sy\in I$. Thus, $f(s)a\in\sqrt{0_{R_{2}}}$ or
$f(s)b\in f(I)$, as needed.

(2) Let $a,b\in R_{1}$ with $ab\in f^{-1}(J).$ Then $f(ab)=f(a)f(b)\in J$ and
since $J$ is an $f(S)$-$n$-ideal of $R_{2},$ there exists $f(s)\in f(S)$ such
that $f(s)f(a)\in\sqrt{0_{R_{2}}}$ or $f(s)f(b)\in J$. Thus, $sa\in
\sqrt{0_{R_{1}}}$ (as $Ker(f)\subseteq\sqrt{0_{R_{1}}}$) or $sb\in f^{-1}(J)$.
\end{proof}

Let $S$ be a multiplicatively closed subset of a ring $R$ and $I$ be an ideal
of $R$ disjoint with $S$. If we denote $r+I\in R/I$ by $\bar{r}$, then clearly
the set $\bar{S}=\{\overline{s}:s\in S\}$ is a multiplicatively closed subset
of $R/I$. In view of Proposition \ref{f}, we conclude the following result for
$\overline{S}$-$n$-ideals of $R/I.$

\begin{corollary}
\label{quot}Let $S$ be a multiplicatively closed subset of a ring $R$ and $I$,
$J$ are two ideals of $R$ with $I\subseteq J$ .
\end{corollary}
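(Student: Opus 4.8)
The plan is to obtain this corollary as a direct specialization of Proposition \ref{f}, applied to the canonical surjection $\pi:R\rightarrow R/I$, $r\mapsto\bar{r}$. The first thing I would record is the dictionary between the two settings: $\pi$ is an epimorphism, $Ker(\pi)=I$, and $\pi(S)=\bar{S}$. Moreover, because $I\subseteq J$, an element $\bar{s}$ lies in $J/I$ precisely when $s\in J$, so the hypothesis $S\cap J=\emptyset$ is equivalent to $\bar{S}\cap(J/I)=\emptyset$; this is what lets us pass freely between ``$J$ is disjoint with $S$'' and ``$J/I$ is disjoint with $\bar{S}$'', and similarly $\pi^{-1}(J/I)=J$ since $I\subseteq J$.

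For the forward direction (if $J$ is an $S$-$n$-ideal of $R$, then $J/I$ is an $\bar{S}$-$n$-ideal of $R/I$), I would simply invoke Proposition \ref{f}(1): $J$ is an $S$-$n$-ideal containing $Ker(\pi)=I$ and $\pi$ is onto, so $\pi(J)=J/I$ is a $\pi(S)=\bar{S}$-$n$-ideal of $R/I$. For the converse direction, I would use Proposition \ref{f}(2): under the additional hypothesis $I\subseteq\sqrt{0_{R}}$ we have $Ker(\pi)=I\subseteq\sqrt{0_{R}}$, and since $\pi^{-1}(J/I)=J$, if $J/I$ is an $\bar{S}$-$n$-ideal of $R/I$ then $J=\pi^{-1}(J/I)$ is an $S$-$n$-ideal of $R$.

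There is no substantive obstacle here; the content lies entirely in Proposition \ref{f}, and what remains is bookkeeping. The two points I would be careful about are: (i) checking that the relevant ideals are genuinely disjoint with the relevant multiplicatively closed sets, as noted above, so that the statements are meaningful; and (ii) making sure the nilradical hypothesis is stated for $R$, namely $I\subseteq\sqrt{0_{R}}$, since this is exactly the translation of ``$Ker(f)\subseteq\sqrt{0_{R_{1}}}$'' in Proposition \ref{f}(2) and cannot in general be dropped, just as in the non-$S$ setting for $n$-ideals.
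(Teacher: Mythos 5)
Your treatment of the quotient statement is correct and follows exactly the paper's route: both directions are immediate from Proposition \ref{f} applied to the canonical projection $\pi:R\rightarrow R/I$, with $Ker(\pi)=I$, $\pi(S)=\bar{S}$ and $\pi^{-1}(J/I)=J$. Your disjointness bookkeeping is also the right one --- $S\cap J=\emptyset$ is equivalent to $\bar{S}\cap(J/I)=\emptyset$ because $I\subseteq J$ (the paper's own proof writes ``$I\cap S=\phi$'' where it evidently means $J\cap S=\phi$, so your version is in fact the corrected one), and you correctly identify $I\subseteq\sqrt{0_{R}}$ as the translation of the kernel hypothesis in Proposition \ref{f}(2) needed for the converse.

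However, the corollary as stated in the paper has a second item that your proposal does not address at all: if $R$ is a subring of $R^{\prime}$ and $I^{\prime}$ is an $S$-$n$-ideal of $R^{\prime}$, then $I^{\prime}\cap R$ is an $S$-$n$-ideal of $R$. The paper disposes of this by applying Proposition \ref{f}(2) to the natural injection $i:R\rightarrow R^{\prime}$: here $Ker(i)=0\subseteq\sqrt{0_{R}}$, $i(S)=S$, and $i^{-1}(I^{\prime})=I^{\prime}\cap R$. This is the same mechanism you already use for the converse of the quotient statement, so it is a one-line addition, but as written your proof covers only half of the corollary.
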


\begin{enumerate}
\item If $J$ is an $S$-$n$-ideal of $R$, then $J/I$ is an $\overline{S}$%
-$n$-ideal of $R/I$. Moreover, the converse is true if $I\subseteq\sqrt{0}$.

\item If $R$ is a subring of $R^{\prime}$ and $I^{\prime}$ is an $S$-$n$-ideal
of $R^{\prime}$, then $I^{\prime}\cap R$ is an $S$-$n$-ideal of $R.$
\end{enumerate}

\begin{proof}
(1) Note that $(J/I)\cap\overline{S}=\phi$ if and only if $I\cap S=\phi$. Now,
we apply the canonical epimorphism $\pi:R\rightarrow R/I$ in Proposition
\ref{f}.

(2) Apply the natural injection $i:R\rightarrow R^{\prime}$ in Proposition
\ref{f} (2).
\end{proof}

We recall that a proper ideal $I$ of a ring $R$ is called superfluous if
whenever $I+J=R$ for some ideal $J$ of $R$, then $J=R$.

\begin{proposition}
\label{Sum}Let $S\subseteq reg(R)$ be a multiplicatively closed subset of a
ring $R$.
\end{proposition}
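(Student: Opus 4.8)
The plan is to reduce the proposition to the single fact, already recorded in Proposition \ref{p1}(1), that an $S$-$n$-ideal of $R$ sits inside the nilradical once $S\subseteq reg(R)$. First I would recall the elementary observation that an ideal $L$ of $R$ is superfluous if and only if $L\subseteq J(R)$, the Jacobson radical: if $L\subseteq J(R)$ and $L+K=R$, write $1=x+k$ with $x\in L$ and $k\in K$; then $k=1-x$ is a unit because $x\in J(R)$, so $K=R$; conversely, if $L\nsubseteq J(R)$ then $L\nsubseteq\mathfrak{m}$ for some maximal ideal $\mathfrak{m}$, whence $L+\mathfrak{m}=R$ with $\mathfrak{m}\neq R$.

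Granting this, the main claim --- that every $S$-$n$-ideal $I$ of $R$ is superfluous --- follows immediately: by Proposition \ref{p1}(1) we have $I\subseteq\sqrt{0}$, and since every nilpotent element lies in every maximal ideal, $\sqrt{0}\subseteq J(R)$; hence $I\subseteq J(R)$ and $I$ is superfluous by the previous paragraph. The same argument handles any statement about sums: if $\{I_{\alpha}\}$ is a family of $S$-$n$-ideals of $R$, each $I_{\alpha}\subseteq\sqrt{0}$, so the ideal $\sum_{\alpha}I_{\alpha}$ they generate is again contained in the nilradical (the nilradical being an ideal), and therefore in $J(R)$, so $\sum_{\alpha}I_{\alpha}$ is superfluous. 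If instead the proposition combines an $S$-$n$-ideal $I$ with an arbitrary superfluous ideal $L$, one uses that superfluousness passes to sums: from $I+L+K=R$ and superfluousness of $I$ we get $L+K=R$, and then superfluousness of $L$ gives $K=R$.

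I do not expect a real obstacle here: everything rests on Proposition \ref{p1}(1) together with the two routine facts $\sqrt{0}\subseteq J(R)$ and ``$L$ superfluous $\iff L\subseteq J(R)$''. The only small point to check is that the ideals appearing in the statement are genuinely proper and disjoint from $S$, so that calling them ($S$-$n$-)ideals makes sense; but since $S\subseteq reg(R)$ forces $S\cap\sqrt{0}=\emptyset$ and all the ideals under consideration lie inside $\sqrt{0}$, this is automatic.
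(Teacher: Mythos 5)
Your treatment of the first assertion is correct and is essentially the paper's own argument: by Proposition \ref{p1}(1) an $S$-$n$-ideal $I$ satisfies $I\subseteq\sqrt{0}\subseteq J(R)$ when $S\subseteq reg(R)$, and an ideal contained in the Jacobson radical is superfluous (the paper writes this out directly: from $I+K=R$ and $j\in K$ one gets $1-j\in I\subseteq J(R)$, so $j$ is a unit).

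However, the second assertion of Proposition \ref{Sum} is that if $I$ and $J$ are $S$-$n$-ideals of $R$, then $I+J$ is again an \emph{$S$-$n$-ideal} --- not merely that the sum is superfluous. Here your proposal has a genuine gap: showing $I+J\subseteq\sqrt{0}$ (and hence $I+J\subseteq J(R)$) establishes only properness, disjointness from $S$, and superfluousness; it does not produce an $S$-element for $I+J$, i.e.\ an $s\in S$ such that $ab\in I+J$ and $sa\notin\sqrt{0}$ force $sb\in I+J$. Containment in the nilradical is far from sufficient for that: by Proposition \ref{p1}(2), even $\sqrt{0}$ itself is an $S$-$n$-ideal only when it is $S$-prime, so a sub-ideal of $\sqrt{0}$ need not be an $S$-$n$-ideal. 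The paper's proof of this part is where the actual work lies: it notes $I\cap J\subseteq\sqrt{0}$, uses Corollary \ref{quot}(1) to see that $I/(I\cap J)$ is an $\overline{S}_{1}$-$n$-ideal of $R/(I\cap J)$, transports this along the isomorphism $(I+J)/J\cong I/(I\cap J)$ to conclude that $(I+J)/J$ is an $\overline{S}_{2}$-$n$-ideal of $R/J$, and then invokes the converse direction of Corollary \ref{quot}(1) (legitimate precisely because $J\subseteq\sqrt{0}$) to descend to the conclusion that $I+J$ is an $S$-$n$-ideal of $R$. Some such argument constructing an $S$-element for $I+J$ is needed and is absent from your proposal.
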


\begin{enumerate}
\item If $I$ is an $S$-$n$-ideal of $R$, then it is superfluous.

\item If $I$ and $J$ are $S$-$n$-ideals of $R$, then $I+J$ is an $S$-$n$-ideal.
\end{enumerate}

\begin{proof}
(1) Suppose $I+J=R$ for some ideal $J$ of $R$ and let $j\in J$. Then $1-j\in
I\subseteq\sqrt{0}\subseteq J(R)$ by (1) of Proposition \ref{p1}. Thus, $j\in
U(R)$ and $J=R$ as needed.

(2) Suppose $I$ and $J$ are $S$-$n$-ideals of $R$. Since $I,J\subseteq\sqrt
{0}$, then $I+J\subseteq\sqrt{0}$ and so $(I+J)\cap S=\phi$. Now, $I/(I\cap
J)$ is an $\overline{S}_{1}$-$n$-ideal of $R/(I\cap J)$ by (1) of Corollary
\ref{quot} where $\bar{S}_{1}=\{s+(I\cap J):s\in S\}$. If $\bar{S}%
_{2}=\{s+J:s\in S\}$, then clearly $\bar{S}_{1}\subseteq\bar{S}_{2}$ and so
$I/(I\cap J)$ is also an $\overline{S}_{2}$-$n$-ideal of $R/(I\cap J)$. By the
isomorphism $(I+J)/J\cong I/(I\cap J)$, we conclude that $(I+J)/J$ is an
$\overline{S}_{2}$-$n$-ideal of $R/J$. Now, the result follows again by (1) of
Corollary \ref{quot}.
\end{proof}

\begin{proposition}
\label{cart}Let $R$ and $R^{\prime}$ be two rings, $I\unlhd R$ and $I^{\prime
}\unlhd R^{\prime}$. If $S$ and $S^{\prime}$ are multiplicatively closed
subsets of $R$ and $R^{\prime}$, respectively, then
\end{proposition}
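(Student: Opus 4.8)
Throughout I take $R$ and $R'$ to be nonzero, the case of a zero factor being immediate since then $R\times R'$ is isomorphic to the other factor. The plan rests on three standard facts about $R\times R'$, used without comment: every ideal has the form $I\times I'$ with $I\unlhd R$ and $I'\unlhd R'$; $\sqrt{0_{R\times R'}}=\sqrt{0_R}\times\sqrt{0_{R'}}$; and $(I\times I')\cap(S\times S')=(I\cap S)\times(I'\cap S')$, so that $I\times I'$ is disjoint from $S\times S'$ exactly when $I\cap S=\emptyset$ or $I'\cap S'=\emptyset$. I will also use two elementary remarks: since $S'$ is multiplicatively closed, $S'\cap\sqrt{0_{R'}}\neq\emptyset$ if and only if $0\in S'$ (if $s'\in S'$ is nilpotent then $0=(s')^{k}\in S'$); and $0$ cannot lie in both $S$ and $S'$ when $I\times I'$ is disjoint from $S\times S'$, else $(0,0)\in(I\times I')\cap(S\times S')$.

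For the sufficiency direction: if $I$ is an $S$-$n$-ideal of $R$ with $S$-element $s$ and $0\in S'$, I would show that $(s,0_{R'})$ is an $(S\times S')$-element of $I\times I'$ for every ideal $I'$ of $R'$. Disjointness follows from $I\cap S=\emptyset$; and if $(a,a')(b,b')\in I\times I'$ (so $ab\in I$) while $(s,0_{R'})(a,a')=(sa,0)\notin\sqrt{0_R}\times\sqrt{0_{R'}}$, then $sa\notin\sqrt{0_R}$ (because $0\in\sqrt{0_{R'}}$), hence $sb\in I$ and $(s,0_{R'})(b,b')=(sb,0)\in I\times I'$ since $0\in I'$. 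The mirror statement, with $I'$ an $S'$-$n$-ideal, $0\in S$, and element $(0_R,s')$, is proved identically.

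For the necessity direction --- the part I expect to take the real work --- suppose $I\times I'$ is an $(S\times S')$-$n$-ideal with element $(s,s')$. By the disjointness dichotomy and symmetry I may assume $I\cap S=\emptyset$, and I aim to show $I$ is an $S$-$n$-ideal of $R$ with $0\in S'$. Testing the pair $(0_R,1_{R'})$, $(b_0,0_{R'})$ for arbitrary $b_0\in R$ (their product is $(0,0)\in I\times I'$): if $(0,s')\notin\sqrt{0_R}\times\sqrt{0_{R'}}$, that is $s'\notin\sqrt{0_{R'}}$, this forces $sb_0\in I$ for all $b_0$, hence $s\in I\cap S$, a contradiction; so $s'\in\sqrt{0_{R'}}$ and $0\in S'$. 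Next, for $a,b\in R$ with $ab\in I$ and $sa\notin\sqrt{0_R}$, testing $(a,0_{R'})$, $(b,0_{R'})$ (product $(ab,0)\in I\times I'$) gives $(sb,0)\in I\times I'$, so $sb\in I$ and $s$ is an $S$-element of $I$. The symmetric argument (treating the case $I'\cap S'=\emptyset$) supplies the remaining alternative, so altogether $I\times I'$ is an $(S\times S')$-$n$-ideal of $R\times R'$ if and only if ($I$ is an $S$-$n$-ideal of $R$ and $S'\cap\sqrt{0_{R'}}\neq\emptyset$) or ($I'$ is an $S'$-$n$-ideal of $R'$ and $S\cap\sqrt{0_R}\neq\emptyset$).

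The main obstacle is bookkeeping in the necessity direction: every test pair must genuinely sit inside $I\times I'$ (which is why I force a zero coordinate each time), and the disjointness dichotomy must be threaded carefully so the final ``or'' emerges cleanly --- in particular using that $0$ is not in both $S$ and $S'$. Everything else, the sufficiency direction and the passage between ``$S'$ meets $\sqrt{0_{R'}}$'' and ``$0\in S'$'', is routine. If the proposition is stated only as the two sufficiency implications, the second paragraph alone covers it; if it is the full characterization, the third paragraph supplies it.
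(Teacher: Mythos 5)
Your argument is correct, and on specialization it yields exactly what the proposition asserts: the two items of Proposition~\ref{cart} are the biconditionals for $I\times R'$ and $R\times I'$, and these drop out of your general characterization of when $I\times I'$ is an $(S\times S')$-$n$-ideal by taking $I'=R'$ (respectively $I=R$), since the whole ring meets $S'$ and so the second disjunct of your ``or'' is vacuous there. The underlying technique is the same as the paper's --- verify the definition on test pairs with a strategically placed coordinate --- but you organize it more efficiently and prove strictly more. The paper treats $I\times R'$ and $R\times I'$ separately and then needs a separate Remark (if $I$ and $I'$ are both proper and disjoint from $S$, $S'$, then $I\times I'$ is never an $(S\times S')$-$n$-ideal) plus a further Proposition (the sufficiency half for general $I\times I'$); your single statement subsumes all three. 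Two of your choices are genuine improvements: reducing $S'\cap\sqrt{0_{R'}}\neq\emptyset$ to $0\in S'$ lets you take $(s,0_{R'})$ as the $(S\times S')$-element in the sufficiency direction, where the paper must carry along a nilpotent $s'$; and the test pair $(0_R,1_{R'})$, $(b_0,0_{R'})$ extracts $s'\in\sqrt{0_{R'}}$ in one stroke, where the paper's necessity argument splits into the cases $sb\in I$ and $sb\notin I$ and, in the first, applies the defining property to the pair $(b,1)$, $(s,s')$ using $(s,s')^2\notin I\times R'$. Your disjointness bookkeeping (the dichotomy $I\cap S=\emptyset$ or $I'\cap S'=\emptyset$, and the fact that $0$ cannot lie in both $S$ and $S'$) is handled correctly throughout.
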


\begin{enumerate}
\item $I\times R^{\prime}$ is an $(S\times S^{\prime})$-$n$-ideal of $R\times
R^{\prime}$ if and only if $I$ is an $S$-$n$-ideal of $R$ and $S^{\prime}%
\cap\sqrt{0_{R^{\prime}}}\neq\phi$.

\item $R\times I^{\prime}$ is an $(S\times S^{\prime})$-$n$-ideal of $R\times
R^{\prime}$ if and only if $I^{\prime}$ is an $S^{\prime}$-$n$-ideal of
$R^{\prime}$ and $S\cap\sqrt{0_{R}}\neq\phi$.
\end{enumerate}

\begin{proof}
It is clear that $(I\times R^{\prime})\cap(S\times S^{\prime})=\emptyset$ if
and only if $I\cap S=\emptyset$ and $(R\times I^{\prime})\cap(S\times
S^{\prime})=\emptyset$ if and only if $I^{\prime}\cap S^{\prime}=\emptyset$.

(1) Let $a,b\in R$ with $ab\in I$. Choose an $(S\times S^{\prime})$-element
$(s,s^{\prime})$ of $I\times R^{\prime}$. If $sb\notin I$, then $(a,1)(b,1)\in
I\times R^{\prime}$ with $(s,s^{\prime})(b,1)\notin I\times R^{\prime}$. Since
$I\times R^{\prime}$ is an $(S\times S^{\prime})$-$n$-ideal, then
$(s,s^{\prime})(a,1)\in\sqrt{0_{R\times R^{\prime}}}=\sqrt{0_{R}}\times
\sqrt{0_{R^{\prime}}}$. Thus, $sa\in\sqrt{0_{R}}$ and $s^{\prime}\in
S^{\prime}\cap\sqrt{0_{R^{\prime}}}$ $I$. If $sb\in I$, then
$(b,1)(s,s^{\prime})\in I\times R^{\prime}$and so $(s,s^{\prime})(b,1)\in
\sqrt{0_{R\times R^{\prime}}}=\sqrt{0_{R}}\times\sqrt{0_{R^{\prime}}}$ as
$(s,s^{\prime})^{2}\notin I\times R^{\prime}$. In both cases, we conclude that
$I$ is an $S$-$n$-ideal of $R$ and $S^{\prime}\cap\sqrt{0_{R^{\prime}}}%
\neq\phi$. Conversely, suppose $I$ is an $S$-$n$-ideal of $R$, $s$ is some
$S$-element of $I$ and $s^{\prime}\in S^{\prime}\cap\sqrt{0_{R^{\prime}}}$.
Let $(a,a^{\prime})(b,b^{\prime})\in I\times R^{\prime}$ for $(a,a^{\prime
}),(b,b^{\prime})\in R\times R^{\prime}$. Then $ab\in I$ which implies
$sa\in\sqrt{0_{R}}$ or $sb\in I$. Hence, we have either $(s,s^{\prime
})(a,a^{\prime})\in\sqrt{0_{R}}\times\sqrt{0_{R^{\prime}}}$ or $(s,s^{\prime
})(b,b^{\prime})\in I\times R^{\prime}$. Therefore, $(s,s^{\prime})$ is an
$S\times S^{\prime}$-element of $I\times R^{\prime}$ as needed.

(2) Similar to (1).
\end{proof}

The assumptions $S^{\prime}\cap\sqrt{0_{R^{\prime}}}\neq\phi$ and $S\cap
\sqrt{0_{R}}\neq\phi$ in Proposition \ref{cart} are crucial. Indeed, let
$R=R^{\prime}=%
\mathbb{Z}
_{12}$, $S=S^{\prime}=\{\overline{1},\overline{3},\overline{9}\}$ and
$I=<\overline{4}>.$ It is shown in Example \ref{e1} that $I$ is an $S$%
-$n$-ideal of $R$ while $I\times R^{\prime}$ is not an $(S\times S^{\prime}%
)$-$n$-ideal of $R\times R^{\prime}$ as $(\overline{2},\overline{1}%
)(\overline{2},\overline{1})\in I\times R^{\prime}$ but for all $(s,s^{\prime
})\in S\times S$, neither $(s,s^{\prime})(\overline{2},\overline{1})\in
I\times R^{\prime}$ nor $(s,s^{\prime})(\overline{2},\overline{1})\in
\sqrt{0_{R\times R^{\prime}}}$.

\begin{remark}
Let $S$ and $S^{\prime}$ be multiplicatively closed subsets of the rings $R$
and $R^{\prime}$, respectively. If $I$ and $I^{\prime}$ are proper ideals of
$R$ and $R^{\prime}$ disjoint with $S,$ $S^{\prime}$, respectively, then
$I\times I^{\prime}$ is not an $(S\times S^{\prime})$-$n$-ideal of $R\times
R^{\prime}$.
\end{remark}

\begin{proof}
First, note that $S\cap\sqrt{0_{R}}=S^{\prime}\cap\sqrt{0_{R^{\prime}}%
}=\emptyset$. Assume on the contrary that $I\times I^{\prime}$ is an $(S\times
S^{\prime})$-$n$-ideal of $R\times R^{\prime}$ and $(s,s^{\prime})$ is an
$(S\times S^{\prime})$-element of $I\times I^{\prime}.$ Since $(1,0)(0,1)\in
I\times I^{\prime}$, we conclude either $(s,s^{\prime})(1,0)\in\sqrt{0_{R}%
}\times\sqrt{0_{R^{\prime}}}$ or $(s,s^{\prime})(0,1)\in I\times I^{\prime}$
which implies $s\in\sqrt{0_{R}}$ or $s^{\prime}\in I^{\prime}$, a contradiction.
\end{proof}

\begin{proposition}
Let $R$ and $R^{\prime}$ be two rings, $S$ and $S^{\prime}$ be
multiplicatively closed subsets of $R$ and $R^{\prime}$, respectively. If $I$
and $I^{\prime}$ are proper ideals of $R,$ $R^{\prime}$, respectively then
$I\times I^{\prime}$ is an $(S\times S^{\prime})$-$n$-ideal of $R\times
R^{\prime}$ if one of the following statements holds.
\end{proposition}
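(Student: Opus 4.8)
The natural plan is to reduce each of the listed conditions to facts already proved about the ``slab'' ideals $I\times R'$ and $R\times I'$ of the product. The organizing identity is $I\times I'=(I\times R')\cap(R\times I')$, together with the computation $(I\times I')\cap(S\times S')=(I\cap S)\times(I'\cap S')$. The latter tells us that $I\times I'$ is disjoint from $S\times S'$ exactly when at least one of $I\cap S$, $I'\cap S'$ is empty, which---in view of the Remark just above---is why any workable hypothesis must force one coordinate to behave trivially. Each of the listed conditions has the shape: one of $I$, $I'$ is an $S$- (resp.\ $S'$-)$n$-ideal of its ring (hence disjoint from $S$, resp.\ $S'$), while the multiplicative set attached to the other factor meets the nilradical there. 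Note that a hypothesis ``$S'\cap\sqrt{0_{R'}}\neq\emptyset$'' is stronger than it looks: since $1\in S'$ and $S'$ is multiplicatively closed, a nilpotent member of $S'$ has a power equal to $0$ lying in $S'$, so in fact $0\in S'$, hence $I'\cap S'\neq\emptyset$; and dually for $S$.

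Granting a hypothesis of the type ``$I$ is an $S$-$n$-ideal of $R$ and $S'\cap\sqrt{0_{R'}}\neq\emptyset$'', I would argue as follows. By Proposition \ref{cart}(1), $I\times R'$ is an $(S\times S')$-$n$-ideal of $R\times R'$ (it is disjoint from $S\times S'$ since $I\cap S=\emptyset$). Since $0\in S'$ and $0\in I'$, we have $(1,0)\in(R\times I')\cap(S\times S')$, so the ideal $R\times I'$ meets $S\times S'$. Now apply the earlier proposition asserting that the intersection of an $S$-$n$-ideal with an ideal meeting the multiplicative set is again an $S$-$n$-ideal---with ambient ring $R\times R'$, multiplicative set $S\times S'$, $S$-$n$-ideal $I\times R'$, and ideal $R\times I'$---to conclude that $I\times I'=(I\times R')\cap(R\times I')$ is an $(S\times S')$-$n$-ideal, as desired. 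A hypothesis of the symmetric type ``$I'$ is an $S'$-$n$-ideal of $R'$ and $S\cap\sqrt{0_R}\neq\emptyset$'' is handled identically with the two coordinates interchanged, using Proposition \ref{cart}(2).

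For a self-contained alternative one can exhibit the $(S\times S')$-element directly. In the first case take $(s,0)\in S\times S'$, where $s$ is an $S$-element of $I$; given $(a,a')(b,b')\in I\times I'$ one has $ab\in I$, so $sa\in\sqrt{0_R}$ or $sb\in I$, whence $(s,0)(a,a')=(sa,0)\in\sqrt{0_R}\times\sqrt{0_{R'}}=\sqrt{0_{R\times R'}}$ or $(s,0)(b,b')=(sb,0)\in I\times I'$ (using $0\in I'$). In the symmetric case take $(0,s')$ with $s'$ an $S'$-element of $I'$ and argue dually.

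I do not anticipate a genuine obstacle here: once the decomposition $I\times I'=(I\times R')\cap(R\times I')$ is written down, everything follows from Proposition \ref{cart} and the intersection proposition, or else from the one-line direct check above. The only point requiring care is the bookkeeping of the disjointness conditions $(I\times I')\cap(S\times S')=\emptyset$ and $(R\times I')\cap(S\times S')\neq\emptyset$ (respectively $(I\times R')\cap(S\times S')\neq\emptyset$) in each case---precisely the spot where the hypothesis ``$S'\cap\sqrt{0_{R'}}\neq\emptyset$'', i.e.\ $0\in S'$, gets used.
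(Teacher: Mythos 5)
Your proposal is correct, and in fact it contains two complete arguments. Your ``self-contained alternative'' --- observing that a nilpotent element of $S'$ forces $0\in S'$, and then exhibiting $(s,0)$ as an $(S\times S')$-element of $I\times I'$ by checking $(sa,0)\in\sqrt{0_R}\times\sqrt{0_{R'}}$ or $(sb,0)\in I\times I'$ --- is essentially verbatim the proof the paper gives. Your primary route is genuinely different: you write $I\times I'=(I\times R')\cap(R\times I')$, invoke Proposition \ref{cart}(1) to see that $I\times R'$ is an $(S\times S')$-$n$-ideal, note that $R\times I'$ meets $S\times S'$ (because $0\in S'$), and then apply the earlier proposition on intersecting an $S$-$n$-ideal with an ideal meeting $S$. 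This reduction is valid and has the virtue of making visible exactly which previously proved facts are doing the work, at the cost of routing through two intermediate results where a one-line direct check suffices; the paper opts for the direct check. Two trivial remarks: your derivation of $0\in S'$ does not actually need $1\in S'$, only multiplicative closure applied to the powers of a nilpotent element of $S'$; and the disjointness bookkeeping you flag is handled correctly, since $(I\times I')\cap(S\times S')=(I\cap S)\times(I'\cap S')$ is empty as soon as $I\cap S=\emptyset$.
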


\begin{enumerate}
\item $I$ is an $S$-$n$-ideal of $R$ and $S^{\prime}\cap\sqrt{0_{R^{\prime}}%
}\neq\phi$.

\item $I^{\prime}$ is an $S^{\prime}$-$n$-ideal of $R^{\prime}$ and
$S\cap\sqrt{0_{R}}\neq\phi$.
\end{enumerate}

\begin{proof}
Clearly $(I\times I^{\prime})\cap(S\times S^{\prime})=\emptyset$ if and only
if $I\cap S=\emptyset$ or $I^{\prime}\cap S^{\prime}=\emptyset.$ Suppose $I$
is an $S$-$n$-ideal of $R$ and $S^{\prime}\cap\sqrt{0_{R^{\prime}}}\neq\phi$.
Then $I\cap S=\emptyset$ and $0_{R^{\prime}}\in I^{\prime}\cap S^{\prime}%
\neq\emptyset$. Choose an $S$-element $s$ of $I$ and let $(a,a^{\prime
})(b,b^{\prime})\in I\times I^{\prime}$ for $(a,a^{\prime}),(b,b^{\prime})\in
R\times R^{\prime}$. Then $ab\in I$ which implies $sa\in\sqrt{0_{R}}$ or
$sb\in I$. Hence, we have either $(s,0)(a,a^{\prime})\in\sqrt{0_{R}}%
\times\sqrt{0_{R^{\prime}}}$ or $(s,0)(b,b^{\prime})\in I\times I^{\prime}$.
Therefore, $(s,0)$ is an $S\times S^{\prime}$-element of $I\times I^{\prime}$.
Similarly, if $I^{\prime}$ is an $S^{\prime}$-$n$-ideal of $R^{\prime}$ and
$S\cap\sqrt{0_{R}}\neq\phi$, then also $I\times I^{\prime}$ is an $(S\times
S^{\prime})$-$n$-ideal of $R\times R^{\prime}$.
\end{proof}

\section{$S$-$n$-ideals of Idealizations and Amalgamations}

Recall that the idealization of an $R$-module $M$ denoted by $R(+)M$ is the
commutative ring $R\times M$ with coordinate-wise addition and multiplication
defined as $(r_{1},m_{1})(r_{2},m_{2})=(r_{1}r_{2},r_{1}m_{2}+r_{2}m_{1})$.
For an ideal $I$ of $R$ and a submodule $N$ of $M$, $I(+)N$ is an ideal of
$R(+)M$ if and only if $IM\subseteq N$. It is well known that if $I(+)N$ is an
ideal of $R(+)M$, then $\sqrt{I(+)N}=\sqrt{I}(+)M$ and in particular,
$\sqrt{0_{R(+)M}}=\sqrt{0}(+)M$. If $S$ is a multiplicatively closed subset of
$R$, then clearly the sets $S(+)M=\left\{  (s,m):s\in S\text{, }m\in
M\right\}  $ and $S(+)0=\left\{  (s,0):s\in S\right\}  $ are multiplicatively
closed subsets of the ring $R(+)M$.

Next, we determine the relation between $S$-$n$-ideals of $R$ and $S(+)M$%
-$n$-ideals of the $R(+)M$.

\begin{proposition}
\label{Idealiz}Let $N$ be a submodule of an $R$-module $M$, $S$ be a
multiplicatively closed subset of $R$ and $I$ be an ideal of $R$ where
$IM\subseteq N$. If $I(+)N$ is an $S(+)M$-$n$-ideal of $R(+)M$, then $I$ is an
$S$-$n$-ideal of $R$.
\end{proposition}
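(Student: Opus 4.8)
The plan is to extract an $S$-element of $I$ directly from an $S(+)M$-element of $I(+)N$ by passing to first coordinates and testing the $S(+)M$-$n$-ideal condition on elements of the special form $(a,0)$, $(b,0)$.

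First I would record that $I$ is disjoint from $S$, which is part of what it means for $I$ to be an $S$-$n$-ideal: if $a\in I\cap S$, then $(a,0)$ lies in both $I(+)N$ (as $0\in N$) and $S(+)M$ (as $0\in M$), contradicting that $I(+)N$ is an $S(+)M$-$n$-ideal and hence disjoint from $S(+)M$. Next, let $(s,m)\in S(+)M$ be an $S(+)M$-element of $I(+)N$; the claim is that $s\in S$ is an $S$-element of $I$. So take $a,b\in R$ with $ab\in I$ and $sa\notin\sqrt{0}$. Then $(a,0)(b,0)=(ab,0)\in I(+)N$. Using the idealization product $(r_{1},m_{1})(r_{2},m_{2})=(r_{1}r_{2},r_{1}m_{2}+r_{2}m_{1})$, we get $(s,m)(a,0)=(sa,am)$, and since $\sqrt{0_{R(+)M}}=\sqrt{0}(+)M$ as recalled before the proposition, the assumption $sa\notin\sqrt{0}$ forces $(s,m)(a,0)\notin\sqrt{0_{R(+)M}}$. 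Because $(s,m)$ is an $S(+)M$-element of $I(+)N$, we conclude $(s,m)(b,0)\in I(+)N$; but $(s,m)(b,0)=(sb,bm)$, and membership in $I(+)N$ yields $sb\in I$. Thus $s$ is an $S$-element of $I$, so $I$ is an $S$-$n$-ideal of $R$.

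I do not expect a genuine obstacle in this direction: the only steps needing care are using the correct idealization multiplication formula and the identification $\sqrt{0_{R(+)M}}=\sqrt{0}(+)M$, both of which make the computations on the elements $(a,0)$ and $(b,0)$ completely transparent, together with the bookkeeping observation that the module component $bm$ of $(sb,bm)$ is irrelevant once we know the pair lies in $I(+)N$. (By contrast, the reverse implication—not asserted here—would be the delicate part, since lifting an $S$-element of $I$ to an $S(+)M$-element of $I(+)N$ must control the module components $x,y$ appearing in an arbitrary relation $(a,x)(b,y)\in I(+)N$.)
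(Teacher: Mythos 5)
Your argument is correct and is essentially the paper's own proof: both extract the $S$-element $s$ from an $S(+)M$-element $(s,m)$ of $I(+)N$ and test the defining condition on the pairs $(a,0),(b,0)$, using $\sqrt{0_{R(+)M}}=\sqrt{0}(+)M$ to translate back to $R$. Your extra details (the verification that $I\cap S=\emptyset$ and the explicit idealization products) are just the "clearly" steps of the paper spelled out.
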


\begin{proof}
Clearly, $S\cap I=\phi$. Choose an $S(+)M$-element $(s,m)$ of $I(+)N$ and let
$a,b\in R$ such that $ab\in I$. Then $(a,0)(b,0)\in I(+)N$ and so
$(s,m)(a,0)\in\sqrt{0}(+)M$ or $(s,m)(b,0)\in I(+)N$. Hence, $sa\in\sqrt{0}$
or $sb\in I$ and $I$ is an $S$-$n$-ideal of $R$
\end{proof}

\begin{proposition}
\label{Id}Let $S$ be a multiplicatively closed subset of a ring $R$, $I$ be an
ideal of $R$ disjoint with $S$ and $M$ be an $R$-module. The following are equivalent.
\end{proposition}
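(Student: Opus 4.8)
The plan is to run a cycle of implications linking ``$I$ is an $S$-$n$-ideal of $R$'' with the two statements about $I(+)M$, namely that it is an $S(+)0$-$n$-ideal and that it is an $S(+)M$-$n$-ideal of the idealization $R(+)M$, in the form $(1)\Rightarrow(3)\Rightarrow(2)\Rightarrow(1)$ (with $(1)$ the condition on $I$ and $(2),(3)$ the conditions on $I(+)M$ relative to $S(+)M$ and $S(+)0$). The facts I would lean on are: $I(+)M$ is automatically an ideal of $R(+)M$ since $IM\subseteq M$; $\sqrt{0_{R(+)M}}=\sqrt{0}(+)M$; the disjointness $\big(I(+)M\big)\cap\big(S(+)M\big)=\emptyset$ holds exactly when $I\cap S=\emptyset$, which is assumed, and it then also gives $\big(I(+)M\big)\cap\big(S(+)0\big)=\emptyset$; and the remark preceding Proposition \ref{subset} that the $n$-ideal property with respect to a multiplicatively closed set is inherited when the set is enlarged.

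For $(1)\Rightarrow(3)$ I would fix an $S$-element $s$ of $I$ and show that $(s,0)$ serves as an $S(+)0$-element of $I(+)M$: given $(a,m_1)(b,m_2)=(ab,\,am_2+bm_1)\in I(+)M$ with $(s,0)(a,m_1)=(sa,\,sm_1)\notin\sqrt{0}(+)M$, the first relation forces $ab\in I$ and the second forces $sa\notin\sqrt{0}$, so the hypothesis yields $sb\in I$ and hence $(s,0)(b,m_2)=(sb,\,sm_2)\in I(+)M$ because $sm_2\in M$ automatically. This is the only step in which the module structure intervenes, and it is short. For $(3)\Rightarrow(2)$ I would simply invoke $S(+)0\subseteq S(+)M$ together with the enlargement remark, noting that $I(+)M$ remains disjoint from $S(+)M$. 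Finally, $(2)\Rightarrow(1)$ is exactly Proposition \ref{Idealiz} applied with $N=M$, so nothing further is required.

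I do not foresee a genuine obstacle: the content is a routine transfer of the defining inequality across the idealization. The only points needing care are the bookkeeping of which coordinate of a pair lands in $I$ versus in $\sqrt{0}$, and remembering that the nilradical of $R(+)M$ is $\sqrt{0}(+)M$ rather than $\sqrt{0}(+)0$, so that the hypothesis $(s,0)(a,m_1)\notin\sqrt{0_{R(+)M}}$ really does reduce to $sa\notin\sqrt{0}$ with no constraint on the $M$-coordinate.
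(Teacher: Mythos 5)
Your proposal is correct and follows essentially the same route as the paper: the cycle $I$ $S$-$n$-ideal $\Rightarrow$ $I(+)M$ is an $S(+)0$-$n$-ideal (via the $S(+)0$-element $(s,0)$ and $\sqrt{0_{R(+)M}}=\sqrt{0}(+)M$) $\Rightarrow$ $I(+)M$ is an $S(+)M$-$n$-ideal (enlarging the multiplicative set) $\Rightarrow$ $I$ is an $S$-$n$-ideal (Proposition \ref{Idealiz}), differing only in how you label the two idealization statements. No gaps.
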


\begin{enumerate}
\item $I$ is an $S$-$n$-ideal of $R$.

\item $I(+)M$ is an $S(+)0$-$n$-ideal of $R(+)M$.

\item $I(+)M$ is an $S(+)M$-$n$-ideal of $R(+)M$.
\end{enumerate}

\begin{proof}
(1)$\Rightarrow$(2). Suppose $I$ is an $S$-$n$-ideal of $R$, $s$ is an
$S$-element of $I$ and note that $S(+)0\cap$ $I(+)M=\phi$. Choose $(s,0)\in
S(+)0$ and let $(a,m_{1}),(b,m_{2})\in R(+)M$ such that $(a,m_{1})(b,m_{2})\in
I(+)M$. Then $ab\in I$ and so either $sa\in\sqrt{0}$ or $sb\in I$. It follows
that $(s,0)(a,m_{1})\in\sqrt{0}(+)M=\sqrt{0_{R(+)M}}$ or $(s,0)(b,m_{2})\in
I(+)M$. Thus, $I(+)M$ is an $S(+)0$-$n$-ideal of $R(+)M$.

(2)$\Rightarrow$(3). Clear since $S(+)0\subseteq S(+)M$.

(3)$\Rightarrow$(1). Proposition \ref{Idealiz}.
\end{proof}

\begin{remark}
The converse of Proposition \ref{Idealiz} is not true in general. For example,
if $S=\left\{  1,-1\right\}  $, then $0$ is an $S$-$n$-ideal of $%
\mathbb{Z}
$ but $0(+)\bar{0}$ is not an $(S(+)%
\mathbb{Z}
_{6})$-$n$-ideal of $%
\mathbb{Z}
(+)%
\mathbb{Z}
_{6}$. For example, $(2,\bar{0})(0,\bar{3})\in0(+)\bar{0}$ but clearly
$(s,m)(2,\bar{0})\notin\sqrt{0}(+)%
\mathbb{Z}
_{6}=\sqrt{0_{%
\mathbb{Z}
(+)%
\mathbb{Z}
_{6}}}$ and $(s,m)(0,\bar{3})\notin0(+)\bar{0}$ for all $(s,m)\in S(+)%
\mathbb{Z}
_{6}$.
\end{remark}

Let $R$ and $R^{\prime}$ be two rings, $J$ be an ideal of $R^{\prime}$ and
$f:R\rightarrow R^{\prime}$ be a ring homomorphism. The set $R\Join
^{f}J=\left\{  (r,f(r)+j):r\in R\text{, }j\in J\right\}  $ is a subring of
$R\times R^{\prime}$ called the amalgamation of $R$ and $R^{\prime}$ along $J$
with respect to $f$. In particular, if $Id_{R}:R\rightarrow R$ is the identity
homomorphism on $R$, then $R\Join J=R\Join^{Id_{R}}J=\left\{  (r,r+j):r\in
R\text{, }j\in J\right\}  $ is the amalgamated duplication of a ring along an
ideal $J$. Many properties of this ring have been investigated and analyzed
over the last two decades, see for example \cite{DAnna2}, \cite{DAnna3}.

Let $I$ be an ideal of $R$ and $K$ be an ideal of $f(R)+J$. Then $I\Join
^{f}J=\left\{  (i,f(i)+j):i\in I\text{, }j\in J\right\}  $ and $\bar{K}%
^{f}=\{(a,f(a)+j):a\in R$, $j\in J$, $f(a)+j\in K\}$ are ideals of $R\Join
^{f}J$, \cite{DAnna3}. For a multiplicatively closed subset $S$ of $R$, one
can easily verify that $S\Join^{f}J=\left\{  (s,f(s)+j):s\in S\text{, }j\in
J\right\}  $ and $W=\left\{  (s,f(s)):s\in S\right\}  $ are multiplicatively
closed subsets of $R\Join^{f}J$. If $J\subseteq\sqrt{0_{R^{\prime}}}$, then
one can easily see that $\sqrt{0_{R\Join^{f}J}}=\sqrt{0_{R}}\Join^{f}J$.

Next, we determine when the ideal $I\Join^{f}J$ is $(S\Join^{f}J)$-$n$-ideal
in $R\Join^{f}J$.

\begin{theorem}
\label{ama}Consider the amalgamation of rings $R$ and $R^{\prime}$ along the
ideals $J$ of $R^{\prime}$ with respect to a homomorphism $f$. Let $S$ be a
multiplicatively closed subset of $R$ and $I$ be an ideal of $R$ disjoint with
$S$. Consider the following statements:

(1) $I\Join^{f}J$ is a $W$-$n$-ideal of $R\Join^{f}J$.

(2) $I\Join^{f}J$ is a $(S\Join^{f}J)$-$n$-ideal of $R\Join^{f}J$.

(3) $I$ is a $S$-$n$-ideal of $R$.

Then (1) $\Rightarrow$ (2) $\Rightarrow$ (3). Moreover, if $J\subseteq
\sqrt{0_{R^{\prime}}}$, then the statements are equivalent.
\end{theorem}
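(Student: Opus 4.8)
The plan is to prove the chain of implications $(1)\Rightarrow(2)\Rightarrow(3)$ first, and then close the loop by showing $(3)\Rightarrow(1)$ under the extra hypothesis $J\subseteq\sqrt{0_{R'}}$. For $(1)\Rightarrow(2)$, I would simply observe that $W=\{(s,f(s)):s\in S\}\subseteq S\Join^f J$ (take $j=0$), so any $W$-element of $I\Join^f J$ is automatically an $(S\Join^f J)$-element; disjointness of $I\Join^f J$ from $S\Join^f J$ is equivalent to $I\cap S=\emptyset$, which is assumed. This is the easy step.

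For $(2)\Rightarrow(3)$, I would take an $(S\Join^f J)$-element $(s,f(s)+j_0)$ of $I\Join^f J$ and show $s$ is an $S$-element of $I$. Given $a,b\in R$ with $ab\in I$, form $(a,f(a))(b,f(b))=(ab,f(ab))\in I\Join^f J$ (here $j=0$, and $f(ab)=f(a)f(b)$). By hypothesis either $(s,f(s)+j_0)(a,f(a))\in\sqrt{0_{R\Join^f J}}$ or $(s,f(s)+j_0)(b,f(b))\in I\Join^f J$. In the first case, projecting onto the first coordinate (and using that $\sqrt{0_{R\times R'}}=\sqrt{0_R}\times\sqrt{0_{R'}}$, hence the first coordinate of any nilpotent of the subring $R\Join^f J$ is nilpotent in $R$) gives $sa\in\sqrt{0_R}$; in the second case the first coordinate $sb$ lies in $I$. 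So $s$ is an $S$-element of $I$.

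For the converse $(3)\Rightarrow(1)$ under $J\subseteq\sqrt{0_{R'}}$, I would use the identity $\sqrt{0_{R\Join^f J}}=\sqrt{0_R}\Join^f J$ noted in the excerpt. Let $s\in S$ be an $S$-element of $I$; I claim $(s,f(s))\in W$ is a $W$-element of $I\Join^f J$. Suppose $(a,f(a)+j)(b,f(b)+k)\in I\Join^f J$ for $a,b\in R$, $j,k\in J$. Comparing first coordinates, $ab\in I$, so $sa\in\sqrt{0_R}$ or $sb\in I$. If $sb\in I$, then $(s,f(s))(b,f(b)+k)=(sb, f(sb)+sk)$ has first coordinate in $I$ and $sk\in J$, so it lies in $I\Join^f J$. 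If $sa\in\sqrt{0_R}$, then $(s,f(s))(a,f(a)+j)=(sa,f(sa)+sj)$; since $sa\in\sqrt{0_R}$ and $sj\in J\subseteq\sqrt{0_{R'}}$, this element lies in $\sqrt{0_R}\Join^f J=\sqrt{0_{R\Join^f J}}$. Hence $(s,f(s))$ is a $W$-element and $(1)$ holds.

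The main obstacle — really the only subtle point — is handling the radical correctly: one must be careful that an element of $R\Join^f J$ being nilpotent forces its first coordinate to be nilpotent in $R$ (needed for $(2)\Rightarrow(3)$), and that for the reverse direction the hypothesis $J\subseteq\sqrt{0_{R'}}$ is exactly what makes $(sa,f(sa)+sj)$ nilpotent when $sa$ is; without that hypothesis the second coordinate $f(sa)+sj$ need not be nilpotent, which is why the equivalence requires it. Everything else is routine coordinate bookkeeping, so I would keep those computations terse and just flag where $J\subseteq\sqrt{0_{R'}}$ and the identity $\sqrt{0_{R\Join^f J}}=\sqrt{0_R}\Join^f J$ are invoked.
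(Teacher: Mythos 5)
Your proposal is correct and follows essentially the same route as the paper: $(1)\Rightarrow(2)$ from $W\subseteq S\Join^{f}J$, $(2)\Rightarrow(3)$ by testing on pairs $(a,f(a))$, $(b,f(b))$ and reading off first coordinates, and $(3)\Rightarrow(1)$ via the identity $\sqrt{0_{R\Join^{f}J}}=\sqrt{0_{R}}\Join^{f}J$ when $J\subseteq\sqrt{0_{R'}}$. If anything, you are slightly more careful than the paper in allowing the $(S\Join^{f}J)$-element to have the general form $(s,f(s)+j_{0})$ in step $(2)\Rightarrow(3)$; only a trivial notational slip ($sk$ should be $f(s)k$) is worth fixing.
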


\begin{proof}
(1)$\Rightarrow$(2). Clear as $W\subseteq S\Join^{f}J$.

(2)$\Rightarrow$(3). First note that $(S\Join^{f}J)\cap(I\Join^{f}%
J)=\emptyset$ if and only if $S\cap I=\emptyset$. Suppose $I\Join^{f}J$ is an
$(S\Join^{f}J)$-$n$-ideal of $R\Join^{f}J$. Choose an $(S\Join^{f}J)$-element
$(s,f(s))$ of $I\Join^{f}J$. Let $a,b\in R$ such that $ab\in I$ and
$sa\notin\sqrt{0_{R}}$. Then $(a,f(a))(b,f(b))\in I\Join^{f}J$ and clearly
$(s,f(s))(a,f(a))\notin\sqrt{0_{R\Join^{f}J}}$. Hence, $(s,f(s))(b,f(b))\in
I\Join^{f}J$ and so $sb\in I$. Thus, $s$ is an $S$-element of $I$ and $I$ is
an $S$-$n$-ideal of $R$.

Now, suppose $J\subseteq\sqrt{0_{R^{\prime}}}$. We prove (3)$\Rightarrow$(1).
Suppose $s$ is an $S$-element of $I$ and let $(a,f(a)+j_{1})(b,f(b)+j_{2}%
)=(ab,(f(a)+j_{1})(f(b)+j_{2}))\in I\Join^{f}J$ for $(a,f(a)+j_{1}%
),(b,f(b)+j_{1})\in R\Join^{f}J$. If $(s,f(s))(a,f(a)+j_{1})\notin
\sqrt{0_{R\Join^{f}J}}=\sqrt{0_{R}}\Join^{f}J$, then $sa\notin\sqrt{0_{R}}$.
Since $ab\in I$, we conclude that $sb\in I$ and so $(s,f(s))(b,f(b)+j_{2})\in
I\Join^{f}J$. Thus, $(s,f(s))$ is a $W$-element of $I\Join^{f}J$ and
$I\Join^{f}J$ is a $W$-$n$-ideal of $R\Join^{f}J$.
\end{proof}

\begin{corollary}
\label{cj}Consider the amalgamation of rings $R$ and $R^{\prime}$ along the
ideal $J\subseteq\sqrt{0_{R^{\prime}}}$ of $R^{\prime}$ with respect to a
homomorphism $f$. Let $S$ be a multiplicatively closed subset of $R$. The
$(S\Join^{f}J)$-$n$-ideals of $R\Join^{f}J$ containing $\{0\}\times J$ are of
the form $I\Join^{f}J$ where $I$ is a $S$--$n$-ideal of $R.$
\end{corollary}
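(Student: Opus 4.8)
The plan is to derive this as a direct consequence of Theorem \ref{ama} together with the known description of ideals of $R\Join^f J$. First I would recall from \cite{DAnna3} that the ideals of $R\Join^f J$ that contain $\{0\}\times J$ are precisely those of the form $I\Join^f J$ for some ideal $I$ of $R$; indeed, if $\mathfrak{I}$ is such an ideal, one sets $I=\{a\in R:(a,f(a)+j)\in\mathfrak{I}\text{ for some }j\in J\}$ and checks that $\mathfrak{I}=I\Join^f J$, the containment $\{0\}\times J\subseteq\mathfrak{I}$ being exactly what is needed to recover every element $(a,f(a)+j)$ from $(a,f(a))$ plus an element of $\{0\}\times J$.

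Next I would observe that, under the hypothesis $J\subseteq\sqrt{0_{R'}}$, Theorem \ref{ama} gives the equivalence: $I\Join^f J$ is an $(S\Join^f J)$-$n$-ideal of $R\Join^f J$ if and only if $I$ is an $S$-$n$-ideal of $R$. Combining this with the parametrization of the previous paragraph, an $(S\Join^f J)$-$n$-ideal of $R\Join^f J$ containing $\{0\}\times J$ must be of the form $I\Join^f J$ for some ideal $I$ of $R$, and that ideal $I$ is forced to be an $S$-$n$-ideal; conversely every $I\Join^f J$ with $I$ an $S$-$n$-ideal is an $(S\Join^f J)$-$n$-ideal and clearly contains $\{0\}\times J$. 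One small point to verify along the way is the disjointness condition: $I\Join^f J$ is disjoint from $S\Join^f J$ precisely when $I\cap S=\emptyset$, which is noted already in the proof of Theorem \ref{ama}, so the ``proper ideal disjoint with the multiplicative set'' requirement transfers cleanly in both directions.

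The only mild obstacle is making sure the correspondence $\mathfrak{I}\leftrightarrow I$ is genuinely a bijection restricted to the relevant classes — that is, that distinct $S$-$n$-ideals $I$ of $R$ give distinct ideals $I\Join^f J$ (immediate, since $I$ is recovered as the image of $I\Join^f J$ under the first projection) and that no $(S\Join^f J)$-$n$-ideal containing $\{0\}\times J$ is missed. Both are routine once the structural lemma on ideals of $R\Join^f J$ is invoked, so the corollary is essentially a bookkeeping repackaging of Theorem \ref{ama}; I would write it in two or three sentences citing Theorem \ref{ama} and \cite{DAnna3}.
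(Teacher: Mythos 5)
Your proof is correct and follows essentially the same route as the paper: Theorem \ref{ama} handles the direction that each $I\Join^{f}J$ with $I$ an $S$-$n$-ideal is an $(S\Join^{f}J)$-$n$-ideal, and the converse rests on the fact that an ideal of $R\Join^{f}J$ containing $\{0\}\times J$ is of the form $I\Join^{f}J$. The only cosmetic difference is that the paper obtains the $S$-$n$ property of $I$ by applying Proposition \ref{f} to the surjection $\varphi(a,f(a)+j)=a$ (whose kernel is $\{0\}\times J$), whereas you invoke the implication (2)$\Rightarrow$(3) of Theorem \ref{ama} after first parametrizing the ideal; both are valid.
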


\begin{proof}
From Theorem \ref{ama}, $I\Join^{f}J$ is a $(S\Join^{f}J)$-$n$-ideal of
$R\Join^{f}J$ for any $S$-$n$-ideal $I$ of $R$. Let $K$ be a $(S\Join^{f}%
J)$-$n$-ideal of $R\Join^{f}J$ containing $\{0\}\times J.$ Consider the
surjective homomorphism $\varphi:R\Join^{f}J\rightarrow R$ defined by
$\varphi(a,f(a)+j)=a$ for all $(a,f(a)+j)\in R\Join^{f}J$. Since
$Ker(\varphi)=\{0\}\times J\subseteq K$, then $I:=\varphi(K)$ is a $S$%
-$n$-ideal of $R$ by Proposition \ref{f}. Since $\{0\}\times J\subseteq K$, we
conclude that $K=I\Join^{f}J$.
\end{proof}

Let $T$ be a multiplicatively closed subset of $R^{\prime}$. Then clearly, the
set $\bar{T}^{f}=\{(s,f(s)+j):s\in R,$ $j\in J,$ $f(s)+j\in T\}$ is a
multiplicatively closed subset of $R\Join^{f}J$.

\begin{theorem}
\label{ama2}Consider the amalgamation of rings $R$ and $R^{\prime}$ along the
ideals $J$ of $R^{\prime}$ with respect to an epimorphism $f$. Let $K$ be an
ideal of $R^{\prime}$ and $T$ be a multiplicatively closed subset of
$R^{\prime}$ disjoint with $K$. If $\bar{K}^{f}$ is a $\bar{T}^{f}$-$n$-ideal
of $R\Join^{f}J$, then $K$ is a $T$-$n$-ideal of $R^{\prime}$. The converse is
true if $J\subseteq\sqrt{0_{R^{\prime}}}$ and $Ker(f)\subseteq\sqrt{0_{R}}$.
\end{theorem}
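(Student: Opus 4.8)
The proof naturally splits into the two implications, and each mirrors a computation already carried out for $I \Join^f J$ in Theorem \ref{ama}, so the strategy is to adapt those arguments to the ideal $\bar K^f$ and the multiplicatively closed set $\bar T^f$. First I would record the easy bookkeeping: $\bar K^f \cap \bar T^f = \emptyset$ is equivalent to $K \cap T = \emptyset$, since an element $(s, f(s)+j)$ of $\bar T^f$ lies in $\bar K^f$ exactly when its second coordinate $f(s)+j$ lies in $K \cap T$. This guarantees $K$ is proper and disjoint from $T$ whenever $\bar K^f$ is proper and disjoint from $\bar T^f$.

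For the forward direction, suppose $\bar K^f$ is a $\bar T^f$-$n$-ideal with $\bar T^f$-element $(t, f(t)+j_0)$ where $f(t)+j_0 \in T$; write $w = f(t)+j_0$. Given $x, y \in R'$ with $xy \in K$ and $wx \notin \sqrt{0_{R'}}$, I use that $f$ is an epimorphism to pull back: choose $a, b \in R$ with $f(a) = x$, $f(b) = y$ (this is where surjectivity of $f$ is essential). Then $(a, f(a))(b, f(b)) = (ab, f(ab))$ has second coordinate $f(a)f(b) = xy \in K$, so $(a,f(a))(b,f(b)) \in \bar K^f$. Next I must check $(t,w)(a,f(a)) = (ta, wx) \notin \sqrt{0_{R\Join^f J}}$; since the second coordinate is $wx \notin \sqrt{0_{R'}}$ and $\sqrt{0_{R\Join^f J}} \subseteq \sqrt{0_R} \times \sqrt{0_{R'}}$ always holds, this is immediate. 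The $\bar T^f$-$n$-ideal property then forces $(t,w)(b,f(b)) \in \bar K^f$, whose second coordinate $w y \in K$; hence $w$ is a $T$-element of $K$.

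For the converse, assume $J \subseteq \sqrt{0_{R'}}$ and $\operatorname{Ker}(f) \subseteq \sqrt{0_R}$, so that $\sqrt{0_{R \Join^f J}} = \sqrt{0_R} \Join^f J$ by the remark preceding Theorem \ref{ama}. Let $t \in T$ be a $T$-element of $K$; I claim a suitable lift is a $\bar T^f$-element of $\bar K^f$. Pick $t_0 \in R$ with $f(t_0) = t$ (surjectivity again) and set $w = (t_0, f(t_0)) = (t_0, t) \in \bar T^f$. Take $(a, f(a)+j_1)(b, f(b)+j_2) \in \bar K^f$; its second coordinate $(f(a)+j_1)(f(b)+j_2)$ lies in $K$, and since $J \subseteq \sqrt{0_{R'}} \subseteq \sqrt{K}$... here I must be slightly careful — I instead argue that because $f$ is onto and $J$ is an ideal of $R'$, the element $(f(a)+j_1)(f(b)+j_2) \in K$ can be written as $xy$ for appropriate $x = f(a)+j_1$, $y = f(b)+j_2$ in $R'$, and I apply the $T$-$n$-ideal property of $K$ directly to these. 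Suppose $w \cdot (a, f(a)+j_1) = (t_0 a, t(f(a)+j_1)) \notin \sqrt{0_R} \Join^f J$. I need the second coordinate condition: $t(f(a)+j_1) \notin \sqrt{0_{R'}}$ — this follows because if $t(f(a)+j_1) \in \sqrt{0_{R'}}$, then combined with $t_0 a$: actually the cleanest route is to observe $w(a,f(a)+j_1) \notin \sqrt{0_{R\Join^f J}} = \sqrt{0_R}\Join^f J$ forces $t(f(a)+j_1) \notin \sqrt{0_{R'}}$ since membership in $\sqrt{0_R}\Join^f J$ of $(t_0a, t(f(a)+j_1))$ would require $t(f(a)+j_1) \in f(\sqrt{0_R}) + J \subseteq \sqrt{0_{R'}}$ using $\operatorname{Ker}(f)\subseteq\sqrt{0_R}$ and $J \subseteq \sqrt{0_{R'}}$; I would spell this inclusion out carefully. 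Then the $T$-$n$-ideal property gives $t(f(b)+j_2) \in K$, so $w(b,f(b)+j_2) = (t_0 b, t(f(b)+j_2)) \in \bar K^f$, as its first coordinate is in $R$, its deviation $t(f(b)+j_2) - f(t_0 b) \in J$, and $t(f(b)+j_2) \in K$. Hence $w$ is a $\bar T^f$-element and $\bar K^f$ is a $\bar T^f$-$n$-ideal.

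**Main obstacle.** The bookkeeping around radicals in the converse is where I expect the friction: I must verify precisely that $w(a,f(a)+j_1) \notin \sqrt{0_{R\Join^f J}}$ translates to $t \cdot(f(a)+j_1) \notin \sqrt{0_{R'}}$, which relies essentially on both hypotheses $J \subseteq \sqrt{0_{R'}}$ and $\operatorname{Ker}(f) \subseteq \sqrt{0_R}$ to pin down $\sqrt{0_{R\Join^f J}} = \sqrt{0_R}\Join^f J$ and to understand which second coordinates can appear. Everything else — surjectivity to lift elements, the disjointness reduction, and matching the definition of $\bar K^f$ on the output element — is routine and parallels Theorem \ref{ama} almost verbatim.
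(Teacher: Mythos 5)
Your forward direction is correct and is essentially the paper's argument: lift $x,y$ through the epimorphism $f$, feed $(a,f(a))(b,f(b))$ to the $\bar{T}^{f}$-$n$-ideal hypothesis, and read off second coordinates; your remark that $\sqrt{0_{R\Join^{f}J}}\subseteq\sqrt{0_{R}}\times\sqrt{0_{R^{\prime}}}$ holds unconditionally just makes explicit the ``clearly'' in the paper.

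The converse, however, contains a logical inversion at exactly the step you flag as the main obstacle. You need
\[
(t_{0}a,\,t(f(a)+j_{1}))\notin\sqrt{0_{R\Join^{f}J}}\ \Longrightarrow\ t(f(a)+j_{1})\notin\sqrt{0_{R^{\prime}}},
\]
equivalently $t(f(a)+j_{1})\in\sqrt{0_{R^{\prime}}}\Rightarrow(t_{0}a,t(f(a)+j_{1}))\in\sqrt{0_{R\Join^{f}J}}$, in order to rule out the first branch of the $T$-$n$-ideal dichotomy for $K$ and conclude $t(f(b)+j_{2})\in K$. What you actually justify is the reverse implication: ``membership in $\sqrt{0_{R}}\Join^{f}J$ would require $t(f(a)+j_{1})\in f(\sqrt{0_{R}})+J\subseteq\sqrt{0_{R^{\prime}}}$'' shows that nilpotence in the amalgamation forces nilpotence of the second coordinate, which is the contrapositive of the statement you do not need. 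The correct argument --- the one the paper gives --- runs the other way: if $t(f(a)+j_{1})\in\sqrt{0_{R^{\prime}}}$, then since $tj_{1}\in J\subseteq\sqrt{0_{R^{\prime}}}$ we get $f(t_{0}a)=tf(a)\in\sqrt{0_{R^{\prime}}}$, hence $(t_{0}a)^{m}\in Ker(f)\subseteq\sqrt{0_{R}}$ for some $m$, so $t_{0}a\in\sqrt{0_{R}}$, and therefore $(t_{0}a,t(f(a)+j_{1}))=(t_{0}a,f(t_{0}a)+tj_{1})\in\sqrt{0_{R}}\Join^{f}J=\sqrt{0_{R\Join^{f}J}}$. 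This is where both hypotheses $J\subseteq\sqrt{0_{R^{\prime}}}$ and $Ker(f)\subseteq\sqrt{0_{R}}$ are actually consumed; with this step repaired your proof coincides with the paper's.
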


\begin{proof}
First, note that $T\cap K=\phi$ if and only if $\bar{T}^{f}\cap\bar{K}%
^{f}=\phi$. Suppose $\bar{K}^{f}$ is a $\bar{T}^{f}$-$n$-ideal of $R\Join
^{f}J$ and $(s,f(s)+j)$ is some $\bar{T}^{f}$-element of $\bar{K}^{f}$. Let
$a^{\prime}$,$b^{\prime}\in R^{\prime}$ such that $a^{\prime}b^{\prime}\in K$
and choose $a,b\in R$ where $f(a)=a^{\prime}$ and $b=f(b^{\prime})$. Then
$(a,f(a)),(b,f(b))\in R\Join^{f}J$ with $(a,f(a))(b,f(b))=(ab,f(ab))\in\bar
{K}^{f}$. By assumption, we have either
$(s,f(s)+j)(a,f(a))=(sa,(f(s)+j)f(a))\in\sqrt{0_{R\Join^{f}J}}$ or
$(s,f(s)+j)(b,f(b))=(sb,(f(s)+j)f(b))\in\bar{K}^{f}$. Thus, $f(s)+j\in T$ and
clearly, $(f(s)+j)f(a)\in\sqrt{0_{R^{\prime}}}$ or $(f(s)+j)f(b)\in K$. It
follows that $K$ is a $T$-$n$-ideal of $R^{\prime}$. Now, suppose $K$ is a
$T$-$n$-ideal of $R^{\prime}$, $t=f(s)$ is a $T$-element of $K$,
$J\subseteq\sqrt{0_{R^{\prime}}}$ and $Ker(f)\subseteq\sqrt{0_{R}}$. Let
$(a,f(a)+j_{1})(b,f(b)+j_{2})=(ab,(f(a)+j_{1})(f(b)+j_{2}))\in\bar{K}^{f}$ for
$(a,f(a)+j_{1}),(b,f(b)+j_{2})\in R\Join^{f}J$. Then $(f(a)+j_{1}%
)(f(b)+j_{2})\in K$ and so $f(s)(f(a)+j_{1})\in\sqrt{0_{R^{\prime}}}$ or
$f(s)(f(b)+j_{2})\in K$. Suppose $f(s)(f(a)+j_{1})\in\sqrt{0_{R^{\prime}}}$.
Since $J\subseteq\sqrt{0_{R^{\prime}}}$, then $f(sa)\in\sqrt{0_{R^{\prime}}}$
and so $(sa)^{m}\in Ker(f)\subseteq\sqrt{0_{R}}$ for some integer $m$. Hence,
$sa\in\sqrt{0_{R}}$ and $(s,f(s))(a,f(a)+j_{1})\in\sqrt{0_{R\Join^{f}J}}$. If
$f(s)(f(b)+j_{2})\in K$, then clearly, $(s,f(s))(b,f(b)+j_{2})\in\bar{K}^{f}$.
Therefore, $\bar{K}^{f}$ is a $\bar{T}^{f}$-$n$-ideal of $R\Join^{f}J$ as needed.
\end{proof}

In particular, $S\times f(S)$ is a multiplicatively closed subset of
$R\Join^{f}J$ for any multiplicatively closed subset $S$ of $R$. Hence, we
have the following corollary of Theorem \ref{ama2}.

\begin{corollary}
\label{ama3}Let $R$, $R^{\prime}$, $J$, $S$ and $f$ be as in Theorem
\ref{ama}. Let $K$ be an ideal of $R^{\prime}$ and $T=f(S)$. Consider the
following statements.

(1) $\bar{K}^{f}$ is a $(S\times T)$-$n$-ideal of $R\Join^{f}J$.

(2) $\bar{K}^{f}$ is a $\bar{T}^{f}$-$n$-ideal of $R\Join^{f}J$.

(3) $K$ is a $T$-$n$-ideal of $R$.

Then (1) $\Rightarrow$ (2) $\Rightarrow$ (3). Moreover, if $J\subseteq
\sqrt{0_{R^{\prime}}}$ and $Ker(f)\subseteq\sqrt{0_{R}}$, then the statements
are equivalent.
\end{corollary}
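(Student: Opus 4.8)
The plan is to deduce this corollary directly from Theorem \ref{ama2} by identifying the relevant multiplicatively closed sets and invoking the containments among them. First I would observe that $S\times T = S\times f(S)$ is indeed a multiplicatively closed subset of $R\Join^f J$, since $f$ is an epimorphism, so that $T=f(S)$ is a multiplicatively closed subset of $R'$ and the elements of $S\times T$ have the form $(s,f(s'))$ with $s,s'\in S$; closure under multiplication is coordinate-wise and immediate. Then the key structural fact is the chain of inclusions
\[
S\times T \subseteq \bar{T}^{f}\subseteq S\Join^{f}J
\]
applied together with the general principle (used repeatedly in Section 2, e.g.\ in the proof of Theorem \ref{ama}) that if $A\subseteq B$ are multiplicatively closed subsets of a ring and $L$ is an $A$-$n$-ideal disjoint from $B$, then $L$ is a $B$-$n$-ideal, since any $A$-element serves as a $B$-element.

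For (1)$\Rightarrow$(2): given that $\bar{K}^f$ is an $(S\times T)$-$n$-ideal, I would note that every element $(s,f(s'))$ of $S\times T$ can be written as $(s, f(s)+j)$ with $j=f(s')-f(s)\in J$ only when $f(s')-f(s)\in J$; so rather than a literal set inclusion I would argue at the level of $n$-elements: an $(S\times T)$-element of $\bar K^f$ is in particular an element $(s,f(s))\in \bar T^f$ (taking $s'=s$, $j=0$), and the defining implication for an $(S\times T)$-$n$-ideal, specialised to this element, gives exactly the defining implication for a $\bar T^f$-$n$-ideal. Hence $\bar K^f$ is a $\bar T^f$-$n$-ideal. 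For (2)$\Rightarrow$(3): this is precisely the forward direction of Theorem \ref{ama2} (note $R'=f(R)$ here since $f$ is onto, so ``$T$-$n$-ideal of $R$'' in the statement should read ``of $R'$'', matching the hypothesis there), and the converse direction of Theorem \ref{ama2} gives (3)$\Rightarrow$(2) under the extra hypotheses $J\subseteq\sqrt{0_{R'}}$ and $Ker(f)\subseteq\sqrt{0_R}$.

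It then remains only to close the loop with (2)$\Rightarrow$(1) under those same hypotheses. Here I would run the argument of the converse part of Theorem \ref{ama2} but produce an $(S\times T)$-element rather than a $\bar T^f$-element: starting from a $T$-element $t=f(s)$ of $K$, I claim $(s,f(s))$ is an $(S\times T)$-element of $\bar K^f$. Given $(a,f(a)+j_1)(b,f(b)+j_2)\in\bar K^f$ with $(s,f(s))(a,f(a)+j_1)\notin\sqrt{0_{R\Join^f J}}=\sqrt{0_R}\Join^f J$, one gets $sa\notin\sqrt{0_R}$ hence (using $J\subseteq\sqrt{0_{R'}}$ and $Ker(f)\subseteq\sqrt{0_R}$ exactly as in Theorem \ref{ama2}) $f(s)(f(a)+j_1)\notin\sqrt{0_{R'}}$, so from $(f(a)+j_1)(f(b)+j_2)\in K$ and $T$-primality of $K$ we obtain $f(s)(f(b)+j_2)\in K$, i.e.\ $(s,f(s))(b,f(b)+j_2)\in\bar K^f$; and $(s,f(s))\in S\times T$. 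The main (and really only) obstacle is bookkeeping: making sure the ambient set in each implication is the right one ($S\times T$ versus $\bar T^f$ versus $S\Join^f J$) and that the nilradical identification $\sqrt{0_{R\Join^f J}}=\sqrt{0_R}\Join^f J$ is in force, which it is precisely because $J\subseteq\sqrt{0_{R'}}$; everything else is a direct transcription of the proofs of Theorems \ref{ama} and \ref{ama2}.
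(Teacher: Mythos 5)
Your proof is correct and follows essentially the route the paper intends (the paper gives no explicit proof, deriving the corollary directly from Theorem \ref{ama2}): (1)$\Rightarrow$(2) because the witnessing element of $S\times T$ lies in $\bar{T}^{f}$, (2)$\Leftrightarrow$(3) is Theorem \ref{ama2}, and (3)$\Rightarrow$(1) follows by observing that the $\bar{T}^{f}$-element $(s,f(s))$ constructed in the converse of Theorem \ref{ama2} already belongs to $S\times T$. The only cosmetic slip is the parenthetical ``taking $s'=s$, $j=0$'' in your (1)$\Rightarrow$(2): the $(S\times T)$-element need not be diagonal, but any $(s,f(s'))\in S\times T$ that lies in $R\Join^{f}J$ automatically satisfies $f(s')=f(s)+j$ for some $j\in J$ with $f(s')\in T$, hence lies in $\bar{T}^{f}$, so your conclusion stands.
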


We note that if $J\nsubseteq\sqrt{0_{R^{\prime}}}$, then the equivalences in
Theorems \ref{ama} and \ref{ama2} are not true in general.

\begin{example}
Let $R=%
\mathbb{Z}
$, $I=\left\langle 0\right\rangle =K$, $J=\left\langle 3\right\rangle
\nsubseteq\sqrt{0_{%
\mathbb{Z}
}}$ and $S=\left\{  1\right\}  =T$. We have $I\Join J=\left\{  (0,3n):n\in%
\mathbb{Z}
\right\}  $, $\bar{K}=\left\{  (3n,0):n\in%
\mathbb{Z}
\right\}  $, $S\Join J=\left\{  (1,3n+1):n\in%
\mathbb{Z}
\right\}  $, $\bar{T}=\left\{  (1-3n,1):n\in%
\mathbb{Z}
\right\}  $ and $\sqrt{0_{R\Join J}}=\left\{  (0,0)\right\}  $.
\end{example}

\begin{enumerate}
\item $I$ is a $S$-$n$-ideal of $R$ but $I\Join J$ is not a $(S\Join J)$%
-$n$-ideal of $R\Join J$. Indeed, we have $(0,3),(1,4)\in R\Join J$ with
$(0,3)(1,4)=(0,12)\in I\Join J$. But $(1,3n+1)(0,3)\notin\sqrt{0_{R\Join J}}$
and $(1,3n+1)(1,4)\notin I\Join J$ for all $n\in%
\mathbb{Z}
$.

\item $K$ is a $T$-$n$-ideal of $R$ but $\bar{K}$ is not a $\bar{T}$-$n$-ideal
of $R\Join J$. For example, $(-3,0),(-4,-1)\in R\Join J$ with
$(-3,0)(-4,-1)=(12,0)\in\bar{K}$. However, $(1-3n,1)(-3,0)\notin
\sqrt{0_{R\Join J}}$ and $(1-3n,1)(-4,-1)\notin\bar{K}$ for all $n\in%
\mathbb{Z}
$.
\end{enumerate}

By taking $S=\left\{  1\right\}  $ in Theorem \ref{ama} and Corollary
\ref{ama3}, we get the following particular case.

\begin{corollary}
Let $R$, $R^{\prime}$, $J$, $I$, $K$ and $f$ be as in Theorems \ref{ama} and
\ref{ama2}.
\end{corollary}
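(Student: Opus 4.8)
The plan is to obtain this corollary as the direct specialization $S=\{1\}$ of Theorem \ref{ama} and Corollary \ref{ama3}. The first thing I would record is the elementary reduction noted right after the Definition: for the multiplicatively closed set $S=\{1\}\subseteq U(R)$, the class of $S$-$n$-ideals of any ring coincides with the class of ordinary $n$-ideals. Thus statement (3) of Theorem \ref{ama} becomes ``$I$ is an $n$-ideal of $R$'' and statement (3) of Corollary \ref{ama3} becomes ``$K$ is an $n$-ideal of $R^{\prime}$''.

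Next I would identify the relevant multiplicatively closed subsets of $R\Join^{f}J$ under this choice. With $S=\{1\}$ we have $W=\{(s,f(s)):s\in S\}=\{(1,f(1))\}=\{(1,1)\}$, which is just the singleton containing the multiplicative identity of $R\Join^{f}J$; hence a $W$-$n$-ideal of $R\Join^{f}J$ is precisely an $n$-ideal of $R\Join^{f}J$, so statement (1) of Theorem \ref{ama} reads ``$I\Join^{f}J$ is an $n$-ideal of $R\Join^{f}J$''. Likewise $T=f(S)=\{1\}$ and $S\times T=\{(1,1)\}$ is again the identity singleton, so statement (1) of Corollary \ref{ama3} reads ``$\bar{K}^{f}$ is an $n$-ideal of $R\Join^{f}J$''. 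On the other hand, the sets $S\Join^{f}J=\{(1,1+j):j\in J\}$ and $\bar{T}^{f}=\{(s,f(s)+j):s\in R,\ j\in J,\ f(s)+j=1\}$ need not lie inside $U(R\Join^{f}J)$ in general, so the middle statements are left in the form ``$I\Join^{f}J$ is an $(S\Join^{f}J)$-$n$-ideal'' and ``$\bar{K}^{f}$ is a $\bar{T}^{f}$-$n$-ideal''.

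With these identifications in place, the chains of implications (1)$\Rightarrow$(2)$\Rightarrow$(3), together with the equivalences under the hypotheses $J\subseteq\sqrt{0_{R^{\prime}}}$ (and, for the $\bar{K}^{f}$ part, $Ker(f)\subseteq\sqrt{0_{R}}$), are immediate transcriptions of Theorem \ref{ama} and Corollary \ref{ama3}. So there is essentially no obstacle here; the only point that needs a line of justification is the observation that $W$ and $S\times T$ collapse to the identity of $R\Join^{f}J$, which is precisely what converts the ``$W$-$n$-ideal'' and ``$(S\times T)$-$n$-ideal'' hypotheses into plain ``$n$-ideal'' hypotheses. I would therefore keep the proof to a single short paragraph that cites Theorem \ref{ama}, Corollary \ref{ama3}, and the remark following the Definition that $\{1\}$-$n$-ideals are exactly $n$-ideals.
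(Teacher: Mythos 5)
Your proposal is correct and matches the paper exactly: the paper itself derives this corollary by the single remark ``By taking $S=\{1\}$ in Theorem \ref{ama} and Corollary \ref{ama3}, we get the following particular case,'' which is precisely your specialization. Your added observations that $W=\{(1,1)\}$ and $S\times T=\{(1,1)\}$ reduce to the identity of $R\Join^{f}J$, and that $\{1\}$-$n$-ideals are ordinary $n$-ideals, are the (implicit) details the paper leaves to the reader, and you have them right.
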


\begin{enumerate}
\item If $I\Join^{f}J$ is an $n$-ideal of $R\Join^{f}J$, then $I$ is an
$n$-ideal of $R$. Moreover, the converse is true if $J\subseteq\sqrt
{0_{R^{\prime}}}$.

\item If $\bar{K}^{f}$ is an $n$-ideal of $R\Join^{f}J$, then $K$ is an
$n$-ideal of $R^{\prime}$. Moreover, the converse is true if $J\subseteq
\sqrt{0_{R^{\prime}}}$ and $Ker(f)\subseteq\sqrt{0_{R}}$.
\end{enumerate}

\begin{corollary}
\label{11}Let $R$, $R^{\prime}$,$I$, $J$, $K$, $S$ and $T$ be as in Theorems
\ref{ama} and \ref{ama2}.
\end{corollary}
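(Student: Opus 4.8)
The plan is to obtain every clause of the corollary directly from Theorems \ref{ama} and \ref{ama2}, together with Corollaries \ref{cj} and \ref{ama3} whenever the multiplicatively closed set in question is $W$ or $S\times f(S)$; no genuinely new argument is needed, only a check that the ideals and multiplicatively closed subsets named in the statement are exactly the specializations already treated. First I would record the disjointness dictionary used throughout Section 3: $(S\Join^{f}J)\cap(I\Join^{f}J)=\emptyset$ exactly when $S\cap I=\emptyset$, and $\bar{T}^{f}\cap\bar{K}^{f}=\emptyset$ exactly when $T\cap K=\emptyset$, so properness of a contracted ideal is equivalent to properness of the corresponding amalgamated ideal. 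Then, under the standing hypothesis $J\subseteq\sqrt{0_{R^{\prime}}}$, I would reuse the identity $\sqrt{0_{R\Join^{f}J}}=\sqrt{0_{R}}\Join^{f}J$ noted in the text; this is precisely what makes the nilpotency condition $sa\in\sqrt{0_{R}}$ match $(s,f(s))(a,f(a)+j)\in\sqrt{0_{R\Join^{f}J}}$, and it is the only reason $J\subseteq\sqrt{0_{R^{\prime}}}$ is needed for the implications concerning $I\Join^{f}J$.

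For each item I would then cite the relevant earlier result. The clauses about $I\Join^{f}J$ follow from Theorem \ref{ama}: the forward implications hold unconditionally, and the reverse implications hold once $J\subseteq\sqrt{0_{R^{\prime}}}$. The clauses about $\bar{K}^{f}$ follow from Theorem \ref{ama2} and Corollary \ref{ama3}; here the converse needs both $J\subseteq\sqrt{0_{R^{\prime}}}$ and $\mathrm{Ker}(f)\subseteq\sqrt{0_{R}}$, the second being used only to pass from $(sa)^{m}\in\mathrm{Ker}(f)$ to $sa\in\sqrt{0_{R}}$. The sole bookkeeping is the translation of an $(S\Join^{f}J)$- (resp.\ $\bar{T}^{f}$-) element $(s,f(s)+j)$ back to the element $s$ (resp.\ $f(s)+j$), and the lifting of $R^{\prime}$-elements through the epimorphism $f$ via $a^{\prime}=f(x)$; both manipulations are already carried out verbatim in the proofs of Theorems \ref{ama} and \ref{ama2}, so they may simply be invoked.

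The part that needs the most care — and where I would be most explicit — is keeping the two radical hypotheses apart: $J\subseteq\sqrt{0_{R^{\prime}}}$ alone suffices for the $I\Join^{f}J$ statement, whereas the $\bar{K}^{f}$ statement genuinely also requires $\mathrm{Ker}(f)\subseteq\sqrt{0_{R}}$, and the Example above (with $J\not\subseteq\sqrt{0_{R^{\prime}}}$) already shows that dropping $J\subseteq\sqrt{0_{R^{\prime}}}$ destroys the equivalences. For this reason I would prove the $I\Join^{f}J$-item and the $\bar{K}^{f}$-item separately rather than attempt a unified argument, and I would close by noting that setting $S=\{1\}$ and $T=\{1\}$ recovers the statements for ordinary $n$-ideals from the previous corollary, so no extra work is incurred there.
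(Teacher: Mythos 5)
Your proposal is correct and matches the paper's intent: Corollary \ref{11} is stated without proof precisely because it is the immediate specialization of Theorems \ref{ama} and \ref{ama2} to the amalgamated duplication $R\Join J=R\Join^{Id_R}J$, and your plan of citing those theorems together with the disjointness and radical bookkeeping is exactly that. The only remark worth adding is that with $f=Id_R$ the hypothesis $Ker(f)\subseteq\sqrt{0_{R}}$ is automatic, so in item (2) the converse really only requires $J\subseteq\sqrt{0_{R^{\prime}}}$.
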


\begin{enumerate}
\item If $I\Join J$ is a $(S\Join J)$-$n$-ideal of $R\Join J$, then $I$ is a
$S$--$n$-ideal of $R$. Moreover, the converse is true if $J\subseteq
\sqrt{0_{R^{\prime}}}$.

\item If $\bar{K}$ is a $\bar{T}$-$n$-ideal of $R\Join J$, then $K$ is a
$T$-$n$-ideal of $R^{\prime}$. The converse is true if $J\subseteq
\sqrt{0_{R^{\prime}}}$ and $Ker(f)\subseteq\sqrt{0_{R}}$.
\end{enumerate}

As a generalization of $S$-$n$-ideals to modules, in the following we define
the notion of $S$ -$n$-submodules which may inspire the reader for the other work.

\begin{definition}
Let $S$ be a multiplicatively closed subset of a ring $R$, and let $M$ be a
unital $R$-module. A submodule $N$ of $M$ with $(N:_{R}M)\cap S$ $=\emptyset$
is called an $S$ -$n$-submodule if there is an $s\in S$ such that $am\in N$
implies $sa\in\sqrt{(0:_{R}M)}$ or $sm\in N$ for all $a\in R$ and $m\in M.$
\end{definition}


\begin{thebibliography}{99}                                                                                               %


\bibitem {WS-prime}F. A. Almahdi, E. M. Bouba, M. Tamekkante, On weakly
$S$-prime ideals of commutative rings, Analele Stiintifice ale Universitatii
Ovidius Constanta, Seria Mathematica (To appear)

\bibitem {AB}D. F. Anderson and A. Badawi, On $n$-absorbing ideals of
commutative rings, Comm. Algebra 39 (2011), no. 5, 1646--1672.

\bibitem {Gen}D. D. Anderson, and M. Bataineh, Generalizations of prime
ideals, Communications in Algebra 36 (2) (2008), 686-696.

\bibitem {wp}D. Anderson and E. Smith, Weakly prime ideals, Houston Journal of
Mathematics 29 (4) (2003), 831-840.

\bibitem {Bad1}A. Badawi, On 2-absorbing ideals of commutative rings, Bull.
Austral. Math. Soc. 75 (3) (2007), 417--429.

\bibitem {Darani}A.Y. Darani, Generalizations of primary ideals in commutative
rings, Novi Sad. J. Math., 42 (2012), 27-35.

\bibitem {C}G. C\u{a}lug\u{a}reanu, UN-rings. Journal of Algebra and Its
Applications, 15(10) (2016), 1650182.

\bibitem {DAnna1}M. D'Anna and M. Fontana, An amalgamated duplication of a
ring along an ideal: the basic properties, J. Algebra Appl. 6 (2007), no. 3, 443--459.

\bibitem {DAnna2}M. D'Anna, M. Fontana, The amalgamated duplication of a ring
along a multiplicative-canonical ideal, Ark. Mat. 45 (2) (2007), 241-252.

\bibitem {DAnna3}M. D'Anna, C.A. Finocchiaro, and M. Fontana, Properties of
chains of prime ideals in an amalgamated algebra along an ideal, J. Pure Appl.
Algebra 214 (2010), 1633-1641.

\bibitem {Gilmer}R.W. Gilmer, Multiplicative ideal theory, 12, M. Dekker, 1972.

\bibitem {S-prime}A. Hamed, A. Malek, $S$-prime ideals of a commutative ring,
Beitr\"{a}ge zur Algebra und Geometrie/Contributions to Algebra and Geometry,
61.(3) (2020), 533-542.

\bibitem {Hani}H. A. Khashan, A. B. Bani-Ata, $J$-ideals of commutative rings,
International Electronic Journal of Algebra, 29 (2021), 148-164.

\bibitem {M}R. Mohamadian, $r$-ideals in commutative rings. Turkish Journal of
Mathematics, 39(5) (2015), 733-749.

\bibitem {Tekir}U. Tekir, S. Koc, K.H. Oral, $n$-ideals of commutative rings,
Filomat 31 (10) (2017) 2933-2941.

\bibitem {S-primary}S. Visweswaran, Some results on $S$-primary ideals of a
commutative ring, Beitr\"{a}ge zur Algebra und Geometrie/Contributions to
Algebra and Geometry, (2021), 1-20.

\bibitem {Yas}A. Yassine, M. J. Nikmehr, R. Nikandish, On 1-absorbing prime
ideals of commutative rings, Journal of Algebra and its Applications, (2020), doi:10.1142/S0219498821501759.

\bibitem {Ece}E. Yetkin Celikel, Generalizations of n-ideals of Commutative
Rings, Journal of Science and Technology, 12(2) (2019), 650-657.
\end{thebibliography}
\end{document}